\newcommand{\PP}{\mathbb{P}}
\newcommand{\OO}{\mathcal{O}}
\newcommand{\gG}{\mathbb{G}}
\newcommand{\uU}{\mathcal{U}}
\newcommand{\xX}{\mathcal{X}}
\newcommand{\yY}{\mathcal{Y}}
\newcommand{\sing}{\textsf{sing}}
\newcommand{\ev}{\operatorname{ev}}
\newcommand{\spa}{\operatorname{span}}
\newtheorem{theorem}{Theorem}[section]
\newtheorem{lemma}[theorem]{Lemma}
\newtheorem{proposition}[theorem]{Proposition}
\newtheorem{corollary}[theorem]{Corollary}
\newtheorem{conjecture}[theorem]{Conjecture}
\newtheorem{question}[theorem]{Question}
\newtheorem{remark}[theorem]{Remark}
\begin{document}

\title{Linear subspaces of hypersurfaces}
\author[R. Beheshti]{Roya Beheshti}
\address{Department of Mathematics and Statistics\\Washington University in St. Louis\\1 Brookings Drive\\St. Louis, MO 63130}
\email{beheshti@math.wustl.edu}

\author[E. Riedl]{Eric Riedl}
\address{Department of Mathematics\\University of Notre Dame\\255 Hurley Hall\\Notre Dame, IN 46556}
\email{eriedl@nd.edu}

\begin{abstract}
Let $X$ be an arbitrary smooth hypersurface in $\mathbb{C} \PP^n$ of degree $d$. We prove the de Jong-Debarre Conjecture for $n \geq 2d-4$: the space of lines in $X$ has dimension $2n-d-3$. We also prove an analogous result for $k$-planes: if $n \geq 2 \binom{d+k-1}{k} + k$, then the space of $k$-planes on $X$ will be irreducible of the expected dimension. As applications, we prove that an arbitrary smooth hypersurface satisfying $n \geq 2^{d!}$ is unirational, and we prove that the space of degree $e$ curves on $X$ will be irreducible of the expected dimension provided that $d \leq \frac{e+n}{e+1}$.
\end{abstract}

\maketitle

\section{Introduction}
We work throughout over an algebraically closed field of characteristic 0. Let $X \subset \PP^n$ be an arbitrary smooth hypersurface of degree $d$. Let $F_k(X) \subset \gG(k,n)$ be the Hilbert scheme of $k$-planes contained in $X$.

\begin{question}
\label{q-indDim}
What is the dimension of $F_k(X)$?
\end{question}

In particular, we would like to know if there are triples $(n,d,k)$ for which the answer depends only on $(n,d,k)$ and not on the specific smooth hypersurface $X$. It is known classically that $F_k(X)$ is locally cut out by $\binom{d+k}{k}$ equations. Therefore, one might expect the answer to Question \ref{q-indDim} to be that the dimension is $(k+1)(n-k) - \binom{d+k}{k}$, where negative dimensions mean that $F_k(X)$ is empty. This is indeed the case when the hypersurface $X$ is general \cite{3264, hochster-laksov}. Standard examples (see Proposition \ref{prop-counterexample}) show that $\dim F_k(X)$ must depend on the particular smooth hypersurface $X$ for $d$ large relative to $n$ and $k$, but there remains hope that Question \ref{q-indDim} might be answered positively for $n$ large relative to $d$ and $k$.

In the special case $k=1$ there is a conjectured answer as to when $F_1(X)$ has the expected dimension.

\begin{conjecture}[de Jong-Debarre]
\label{conj-deJongDebarre}
If $X \subset \PP^n$ is a smooth hypersurface with $d \leq n$, then $F_1(X)$ has the expected dimension $2n-d-3$.
\end{conjecture}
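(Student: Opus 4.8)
The plan is to prove the conjecture by bounding $\dim F_1(X)$ from both sides, with the lower bound being routine and the upper bound carrying all the content. For the lower bound, note that $F_1(X)$ sits inside $\gG(1,n)$, which has dimension $2n-2$, and is locally cut out by the $d+1$ coefficients of the restriction of the defining equation $f$ to a line; hence every component of $F_1(X)$ has dimension at least $2n-2-(d+1) = 2n-d-3$. Since $d \leq n$ keeps the expected dimension nonnegative and a smooth hypersurface in this range always contains lines, $F_1(X)$ is nonempty and so $\dim F_1(X) \geq 2n-d-3$. The entire task is therefore to establish the matching upper bound $\dim F_1(X) \leq 2n-d-3$.

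For the upper bound I would work with the point-line incidence correspondence $\Phi = \{(p,\ell) : p \in \ell \subset X\}$ and its two projections. The projection $\Phi \to F_1(X)$ is a $\PP^1$-bundle, so $\dim \Phi = \dim F_1(X) + 1$ and it suffices to show $\dim \Phi \leq 2n-d-2$. I would estimate $\dim \Phi$ through the other projection $\pr\colon \Phi \to X$, whose fiber over $p$ is the Fano scheme $F_p$ of lines through $p$ lying on $X$. Choosing affine coordinates centered at $p$ and writing $f = f_1 + f_2 + \cdots + f_d$ for its Taylor decomposition, a line through $p$ in direction $v$ lies on $X$ exactly when $f_1(v) = \cdots = f_d(v) = 0$; since $X$ is smooth, $f_1$ is nonzero and cuts out the tangent hyperplane, so $F_p$ is the common zero locus in $\PP^{n-2}$ of the $d-1$ forms $f_2, \ldots, f_d$ restricted to $\{f_1 = 0\}$, of degrees $2, \ldots, d$. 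In particular $\dim F_p \geq n-d-1$ whenever $F_p \neq \varnothing$, with equality at a general point, and the whole problem reduces to controlling how much $\dim F_p$ can jump up.

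The heart of the argument is thus the stratification of $X$ by $Z_m = \{p \in X : \dim F_p \geq m\}$. For each component $W$ of $\Phi$, the generic fiber dimension of $\pr|_W$ is some $m$ and $\pr(W) \subseteq Z_m$, so $\dim W \leq \dim Z_m + m$; hence $\dim \Phi \leq \max_m(\dim Z_m + m)$, and it is enough to prove
$$ \dim Z_m + m \leq 2n-d-2 \qquad \text{for all } m. $$
This estimate is sharp already at the generic value $m = n-d-1$, where $Z_m$ may be all of $X$. The real content is therefore an \emph{excess codimension} statement: each further unit increase in $m$ must be paid for by a unit drop in $\dim Z_m$, i.e. $\dim Z_m \leq 2n-d-2-m$, so that the locus of points through which the family of lines is one dimension too large has codimension at least one in $X$.

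I expect this last estimate to be the main obstacle, and to be precisely where a hypothesis like $n \geq 2d-4$ enters. The difficulty is that $\dim F_p$ can jump sharply, most dramatically when the lines through $p$ fill out a linear subspace, i.e. when $X$ contains a cone with vertex $p$. I would attack the bound through a dichotomy on the geometry of $F_p$: when the restricted forms $f_2, \ldots, f_d$ behave like a general system, a Bertini-type codimension count on the loci where they acquire an excess common zero locus should give the estimate directly; the remaining, \emph{sharp} case is when $F_p$ contains a positive-dimensional linear space, and there the presence of such linear spaces on a smooth hypersurface is itself a strong constraint that ought to bound $\dim Z_m$. Making the latter bound quantitative and uniform over all smooth $X$ and all strata is the crux, and it is this step whose closure dictates the admissible range of $n$ relative to $d$.
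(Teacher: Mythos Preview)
The statement you are attempting to prove is a \emph{conjecture}; the paper does not prove it in full and it remains open. The paper establishes only the partial case $n \geq 2d-4$ (the Corollary following Theorem~\ref{thm-nonfreelines}). Your proposal is consistent with this: you reduce to the estimate $\dim Z_m \leq 2n-d-2-m$ and then explicitly say this is ``where a hypothesis like $n \geq 2d-4$ enters,'' so what you have written is a strategy outline, not a proof. The crucial codimension estimate is left as a hope, with only a vague dichotomy (``Bertini-type'' versus ``$F_p$ contains a linear space'') offered in its place; neither branch is made precise, and the linear-space branch does not connect to any known constraint strong enough to yield the bound.

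Comparing your outline to the paper's actual argument for $n \geq 2d-4$: the setup via the incidence correspondence and the fibers $F^p(X) = V(f_1,\dots,f_d)$ is the same, but the mechanism controlling the jump in fiber dimension is entirely different. The paper does not stratify by $\dim F^p(X)$; it stratifies by $\delta = n-1-h^0(N_{\ell/X}(-1))$, the codimension of the \emph{tangent space} $T_\ell F^p(X)$. Lemma~\ref{lem-linesDependence} shows that for general $p \in \ell$ this tangent space is already cut out by $f_1,\dots,f_\delta$, so that $V(f_d)$ --- which is smooth because $X$ is --- is tangent to $V(f_1,\dots,f_\delta)$ at every point of the bad locus. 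Lemma~\ref{lem-polys} then bounds the locus where a smooth degree-$d$ hypersurface is tangent to something cut out by $\delta$ lower-degree forms by dimension $\delta-1$. This tangency idea, exploiting the smoothness of the top-degree piece $f_d$ rather than trying to control the common zero locus of $f_2,\dots,f_d$ directly, is the ingredient missing from your proposal.
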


The bound $n \geq d$ in Conjecture \ref{conj-deJongDebarre} is known to be optimal (see Proposition \ref{prop-dJBoptimal}). In the case $k > 1$ we know of no conjecture as to what the optimal bound for $n$ in terms of $d$ and $k$ should be, although there are immediate lower bounds coming from Proposition \ref{prop-counterexample}.

The subject has received much interest over several decades. All prior work has either required $n$ to grow at least exponentially with $d$ or has been for finitely many values of $d$. Harris, Mazur, and Pandharipande \cite{HMP} in their study of unirational parameterizations of smooth hypersurfaces answer Question \ref{q-indDim} in the affirmative for $n$ extremely large relative to $d$ (an iterated exponential). However, their bound was not expected to be optimal, and is still at least exponential even in the case $k=1$.

Conjecture \ref{conj-deJongDebarre} is known for small degree. Debarre proved the result for $d \leq 5$ in unpublished work, Beheshti, Landsberg-Tommasi, and Landsberg-Robles prove the conjecture for $d \leq 6$ \cite{beheshtid6, landsberg-tommasi, landsberg-robles}. Beheshti \cite{beheshtid8} proves Conjecture \ref{conj-deJongDebarre} for $d \leq 8$. 

We address Question \ref{q-indDim} for all degrees $k$, and prove a result that is within a factor of $k+1$ of being optimal. We additionally prove irreducibility.

\begin{theorem}[cf Theorem \ref{thm-irrkplanes} and Corollary \ref{cor-irrLines}]
\label{thm-introkplanes}
Let $X \subset \PP^n$ be a smooth hypersurface. Then $F_k(X)$ will be irreducible of the expected dimension provided that
\[  n \geq 2 \binom{d+k-1}{k}+k .\]
In the special case $k=1$, we can improve the bound, proving that $F_1(X)$ is of the expected dimension if $n \geq 2d-4$ and irreducible if $n \geq 2d-1$ and $n \geq 4$.
\end{theorem}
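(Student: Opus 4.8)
The only content is the upper bound on dimension: the excerpt records that $F_k(X)$ is locally cut out by $\binom{d+k}{k}$ equations in $\gG(k,n)$, so every component has dimension at least the expected $(k+1)(n-k)-\binom{d+k}{k}$, and it suffices to show no component exceeds it (for lines, at least $2n-d-3$). I argue by contradiction: suppose $Z\subseteq F_k(X)$ is a component of dimension strictly greater than expected. I form the universal plane $\Phi_Z=\{(p,\Lambda):p\in\Lambda,\ [\Lambda]\in Z\}$, with projection $\pr$ to $Z$ and evaluation $\ev\colon\Phi_Z\to X$, and set $W=\ev(\Phi_Z)\subseteq X$ to be the swept-out locus. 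Since $\dim\Phi_Z=\dim Z+k$, over a general $p\in W$ the fiber $\ev^{-1}(p)$ parametrizing planes of $Z$ through $p$ has dimension $\dim Z+k-\dim W$, so the excess of $\dim Z$ is inherited by the planes through a general point as soon as $W$ is not too small; the complementary case, where the planes of $Z$ are confined to a proper $W\subsetneq X$, I would dispatch by the same estimates applied to $W$.

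The central local input comes from the normal bundle. Choosing coordinates with a point $q\in\Lambda$ at the origin, the tangential partials $\partial f/\partial x_j$ along $\Lambda$ vanish identically on $\Lambda$ because $f|_\Lambda\equiv 0$, so the only surviving first-order data are the $n-k$ normal partials, which restrict to sections $s_1,\dots,s_{n-k}\in H^0(\Lambda,\OO_\Lambda(d-1))$. Smoothness of $X$ says precisely that these sections have no common zero on $\Lambda$: the normal-bundle map $\OO_\Lambda(1)^{\,n-k}\to\OO_\Lambda(d)$ is everywhere surjective, so $\NN_{\Lambda/X}$ is a vector bundle of rank $n-k-1$ with $\chi(\NN_{\Lambda/X})$ equal to the expected dimension. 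Thus an excess component forces $h^0(\NN_{\Lambda/X})>\chi(\NN_{\Lambda/X})$ for general $[\Lambda]\in Z$, i.e. $\NN_{\Lambda/X}$ is unbalanced; for lines this means $\NN_{\ell/X}=\bigoplus\OO(a_i)$ has some $a_i\le -2$, with excess exactly $\sum_{a_i\le -2}(-a_i-1)=h^1(\NN_{\ell/X})$. The key numerical observation is that the $n-k$ sections $s_i$ live in the space $H^0(\OO_\Lambda(d-1))$ of dimension $\binom{d+k-1}{k}$, so the hypothesis $n-k\ge 2\binom{d+k-1}{k}$ (respectively $n\ge 2d-4$ for lines) supplies at least twice as many base-point-free normal directions as the dimension of that space.

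The heart of the argument, and the step I expect to be the main obstacle, is to turn unbalancedness into a contradiction by bounding the \emph{dimension of the bad locus}: the sublocus of $Z$ (equivalently of $W$) along which $\NN_{\Lambda/X}$ has negative summands. I would estimate $\dim W$ and the fiber dimension $\dim\ev^{-1}(p)$ by controlling the subspace $\spa(s_1,\dots,s_{n-k})\subseteq H^0(\OO_\Lambda(d-1))$ and how it degenerates as $\Lambda$ varies in $Z$; the surjectivity forced by smoothness keeps any common degeneration from concentrating, and once there are more than twice $\binom{d+k-1}{k}$ normal directions, the bad locus is too small to fill a component of excess dimension. Carrying this degeneration-control through the variation of $p$ over $W$ is where essentially all the difficulty lies, and it is exactly this count that produces the threshold linear in $\binom{d+k-1}{k}$; the sharper line bounds $n\ge 2d-4$ and $n\ge 2d-1$ come from a finer bookkeeping of the splitting type $\bigoplus\OO(a_i)$, weighing $\sum_{a_i\le -2}(-a_i-1)$ against the dimension of the family rather than using the crude genericity count.

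Finally, for irreducibility under the stronger hypotheses $n\ge 2\binom{d+k-1}{k}+k$ (and $n\ge 2d-1$, $n\ge 4$ for lines) I would pass to the universal family over $X$ and show that for general $p$ the incidence of planes through $p$ is irreducible of the expected dimension, using the extra slack between the dimension bound and the irreducibility bound precisely to upgrade ``expected dimension'' to ``irreducible'' for these fibers. Since $X$ is irreducible and the fibers are then irreducible of constant dimension, the total incidence $\Phi$ is irreducible, and hence so is its image $F_k(X)$; the general-$k$ statement can be organized so as to follow from the analysis at a general point together with the bound above, with the line case providing both the base estimates and the sharpest constants.
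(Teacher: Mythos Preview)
Your setup is sound --- the universal family, the swept-out locus $W$, and the normal-bundle criterion for excess dimension are all in the paper --- but the proposal has a genuine gap at exactly the step you flag as ``the main obstacle.'' You correctly observe that the $n-k$ normal partials span a subspace of $H^0(\OO_\Lambda(d-1))$, a space of dimension $\binom{d+k-1}{k}$, and that the hypothesis gives you more than twice this many sections. But there is no concrete mechanism in your proposal that converts this redundancy into a bound on the dimension of the bad locus: phrases like ``surjectivity forced by smoothness keeps any common degeneration from concentrating'' are not a proof, and it is not clear that any degeneracy-locus count along these lines would work.

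The paper's approach is structurally different and rests on an idea you are missing. For lines, one studies $F^p(X)=V(f_1,\dots,f_d)\subset\PP^{n-1}$ where $f_i$ has degree $i$; smoothness of $X$ makes $V(f_d)$ smooth (for general $x_0$). A short argument (deforming $p$ along $\ell$) shows that for general $p\in\ell$ the tangent space $T_\ell F^p(X)$ is already cut out by $f_1,\dots,f_\delta$ alone, so in particular $V(f_d)$ is \emph{tangent} to $V(f_1,\dots,f_\delta)$ at $\ell$. The key lemma then says: if $h_1,\dots,h_r$ have degree $<d$ and $V(h)$ is smooth of degree $d$, the locus where $V(h)$ is tangent to $V(h_1,\dots,h_r)$ has dimension at most $r-1$. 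The trick is that tangency at $p$ means $V(h-\sum\alpha_i h_i x_0^{d-\deg h_i})$ is singular at $p$ for some $\alpha\in\mathbb{A}^r$, and each such hypersurface has finite singular locus (its general hyperplane section equals the smooth $V(h,x_0)$). This gives $\dim(S\cap F^p(X))\le\delta-1$ directly. The $k$-plane case runs the same idea with $F^\Lambda(X)\subset\PP^{n-k}$ for a general $(k-1)$-plane $\Lambda\subset\Phi$ (not planes through a point, as you propose), inducting on $k$; the $\binom{d+k-1}{k}$ equations $c_I$ play the role of the $f_i$, with $c_\emptyset$ the smooth degree-$d$ one. Finally, for the sharp line bound $n\ge 2d-4$ the paper invokes an external input you do not mention: Beheshti's result that the non-free lines sweep out a subvariety of dimension at most $n-3$. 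The irreducibility step then uses connectedness of complete-intersection fibers (SGA2) together with the dimension bound, roughly as you sketch.
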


For comparison, Beheshti and Starr \cite{beheshtikplanes} conjecture that if $d > n-2k+1$, then $F_k(X)$ has dimension at most the dimension of $k$-planes in a maximum dimensional linear space in $X$.

To our knowledge, Theorem \ref{thm-introkplanes} is the first result on the de Jong-Debarre Conjecture that works for all degrees $d$ and does not require $n$ to grow exponentially with $d$. The technique relies on a new result that says, essentially, smooth high degree hypersurfaces tend not to be tangent to varieties cut out by lower-degree equations (see Lemma \ref{lem-polys}). The approach is somewhat similar in philosophy, although not in technique, to results of Ananyan, Hochster, Erman, Sam, and Snowden \cite{ananyan-hochster, erman-sam-snowden} in that it describes how a fixed number of smooth equations tend to become algebraically independent as the number of variables grows.

Results like these, in addition to being of interest in their own right, are significant for the birational and arithmetic geometry of hypersurfaces. For instance, we apply our improved bounds on the space of $k$-planes to Harris, Mazur and Pandharipande's result about unirationality of arbitrary smooth hypersurfaces. 

\begin{theorem}[cf Corollary \ref{cor-unirationalityBound}] \label{thm-unirationalityIntro}
An arbitrary smooth degree $d$ hypersurface in $\PP^n$ is unirational provided that $n \geq 2^{d!}.$
\end{theorem}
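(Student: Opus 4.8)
The plan is to feed our improved existence result for linear spaces, Theorem \ref{thm-introkplanes}, into the inductive unirationality construction of Harris, Mazur, and Pandharipande \cite{HMP}. Recall that their argument reduces the degree by one at a time. Given a smooth hypersurface $X = V(f) \subset \PP^n$ of degree $d$ containing a linear space $\Lambda \cong \PP^m$, one sweeps out $X$ by the $(m+1)$-planes $\Pi$ containing $\Lambda$: each such $\Pi$ meets $X$ in $\Lambda$ together with a residual hypersurface $R_\Pi \subset \Pi \cong \PP^{m+1}$ of degree $d-1$, and as $\Pi$ ranges over the $\PP^{n-m-1}$ of planes through $\Lambda$ the residuals $R_\Pi$ cover $X$, since every $x \in X \setminus \Lambda$ lies on $\Pi = \langle \Lambda, x \rangle$. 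Because the base $\PP^{n-m-1}$ is rational, a rationally varying unirational parameterization of the general residual $R_\Pi$ yields unirationality of $X$. One then runs induction on $d$, with base case $d=1$ (hyperplanes are rational), so that $X$ is unirational as soon as the degree $(d-1)$ residuals fall under the inductive hypothesis.

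Concretely, I would define $N(d)$ to be a dimension bound such that every smooth degree $d$ hypersurface in $\PP^n$ with $n \geq N(d)$ is unirational. To carry out the reduction I need $\Lambda$ of dimension $m$ large enough that the residuals $R_\Pi \subset \PP^{m+1}$ fall under the degree $(d-1)$ case, i.e. $m+1 \geq N(d-1)$. To guarantee that such a $\Lambda$ exists on $X$, and that it can be chosen in sufficiently general position, I apply Theorem \ref{thm-introkplanes} with $k = m$: it suffices that $n \geq 2\binom{d+m-1}{m} + m$. This produces the recursion
\[ N(d) \;\leq\; 2\binom{d + N(d-1) - 1}{N(d-1)} + N(d-1), \qquad N(1) = 1. \]

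The crucial point is that the bound of Theorem \ref{thm-introkplanes} is only \emph{polynomial} in the linear-space dimension $k$ for fixed $d$, since $\binom{d+k-1}{k} = \binom{d+k-1}{d-1} \leq (d+k)^{d-1}$. Hence $N(d) \leq 3\bigl(2N(d-1)\bigr)^{d-1}$ for $d \geq 2$, so taking logarithms, $\log_2 N(d) \leq (d-1)\log_2 N(d-1) + O(d)$. Unrolling this recursion, the accumulated leading factor is $\prod_{j=2}^{d}(j-1) = (d-1)!$, giving $\log_2 N(d) = O\bigl((d-1)!\bigr)$, and therefore $N(d) \leq 2^{d!}$ once the lower-order terms and constants are absorbed (which they comfortably are, as $d! = d \cdot (d-1)!$ dominates with enormous room to spare). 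By contrast, the existence bounds available to HMP were exponential in $m$, turning each step of the recursion into an exponential and producing a tower of exponentials of height $\sim d$; replacing that step with our polynomial bound is exactly what collapses the tower into the single clean bound $n \geq 2^{d!}$.

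The hard part, and the reason the full statement is deferred to Corollary \ref{cor-unirationalityBound}, is verifying that HMP's construction genuinely applies to an arbitrary smooth $X$ at every stage of the recursion. One must ensure that the linear space $\Lambda$ can be chosen so that the residuals $R_\Pi$ are themselves smooth (or at worst mildly singular and still covered by the inductive hypothesis), and that the family $\{R_\Pi\}$ admits the rational section needed to parameterize $X$ — the residuals share the common locus $\Lambda \cap R_\Pi$, which is what bootstraps the parameterization. This is precisely where the conclusions of Theorem \ref{thm-introkplanes} beyond mere nonemptiness are essential: irreducibility and the expected dimension of $F_k(X)$ guarantee that the $k$-planes move in a well-understood, positive-dimensional family, so that a general member lies in sufficiently general position to verify the genericity hypotheses of the construction.
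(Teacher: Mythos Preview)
Your proposal follows the same route as the paper --- feed the polynomial $k$-plane bound into the HMP/Paranjape--Srinivas residual construction and observe that the resulting recursion unrolls to $N(d)\le 2^{d!}$ --- and your asymptotic analysis is correct.

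Where your sketch is off is in \emph{where} the $k$-plane theorem enters and how the ``rationally varying'' parameterization is achieved. You invoke Theorem~\ref{thm-introkplanes} on $X$ to find the seed plane $\Lambda$, and then propose to bootstrap via the shared locus $\Lambda\cap R_\Pi$; but that intersection $Z_\Pi:=\Lambda\cap R_\Pi$ is a degree $d-1$ hypersurface in $\Lambda$ that \emph{varies} with $\Pi$, not a fixed linear space, and knowing each residual is individually unirational does not by itself give a parameterization varying rationally over the base $\PP^{n-m-1}$. The paper resolves this by inducting on families of \emph{planed} hypersurfaces over a rational base (Theorem~\ref{thm-unirationality}): at each step the $k$-plane result is applied not to $X$ but to $Z_\Pi$ --- a possibly singular degree $d-1$ hypersurface in $\Gamma\cong\PP^k$ --- to locate an $r$-plane inside each residual, so that the output is again a family of planed hypersurfaces and the next step can begin without re-finding a plane fiberwise. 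This is why the paper needs the version of the $k$-plane theorem allowing a controlled singular locus (Theorem~\ref{thm-irrkplanes}), together with the Bertini count of Lemma~\ref{lem-Bertini} and the basepoint-freeness of Corollary~\ref{cor-linearSeries}, and why the recursion has the shape $k_0(d)=1+2\binom{k_0(d-1)+d-2}{d-2}+\binom{k_0(d-1)+d-1}{d-1}$, tracking the plane dimension and carrying an extra term for the containment condition.
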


Unirational varieties have dense rational points, so proving unirationality is a way to prove that a variety has dense rational points over some number field. It is an important open question whether or not every rationally connected variety is unirational. Smooth hypersurfaces with $d \leq n$ are known to be rationally connected. It is generally expected that Fano hypersurfaces with $d$ approximately equal to $n \geq 5$ should not be unirational, but not even a single example has been proven. 

Theorem \ref{thm-unirationalityIntro} also fits into a long and rich history of the study of rationality properties of hypersurfaces. Quadrics of any dimension and cubic surfaces are classically known to be rational. Any cubic hypersurface of dimension at least two is known to be unirational, but rationality is more subtle. A celebrated result of Clemens and Griffiths \cite{clemens-griffiths} proves that smooth cubic threefolds are unirational but not rational. Other work \cite{kollarHyp, totaro, HMP, schreieder} has culminated in Schreieder's result that a very general hypersurface is not (stably) rational when $n < 2^d$.  
Taken together, our result and Schreieder's suggest that low degree hypersurfaces start to exhibit rationality properties, but the degree $n$ must be quite large with respect to $d$ before they all do so.

We also use Theorem \ref{thm-introkplanes} to prove that the space parametrizing smooth rational curves of small degrees on an arbitrary smooth hypersurface is irreducible. Let $R_e(X)$ be the Hilbert scheme of smooth rational curves of degree $e$ in $X$. In \cite{riedl-yang} Riedl and Yang, building on results of Harris, Roth, and Starr \cite{harris-roth-starr}, show that for $d \leq n-2$ and a general hypersurface $X$ of degree $d$ in $\PP^n$, $R_e(X)$ is irreducible of the expected dimension. More generally they show that the Kontsevich moduli space $\overline{M}_{0,0}(X,e)$ which compactifies $R_e(X)$ is 
irreducible in this degree range. The question which arises is: for fixed $n$, how small must $d$ be in order for the dimension of $R_e(X)$ to equal the expected dimension for all smooth hypersurfaces $X$ of degree $d$ in $\PP^n$? Of course, for $d=1, 2$ and $d \leq n-2$, $R_e(X)$  is always irreducible of the expected dimension. Coskun and Starr \cite{coskun-starr} show that for every smooth cubic hypersurface of dimension at least 4, $\overline{M}_{0,0}(X,e)$  is irreducible of the expected dimension. Lehmann and Tanimoto \cite{lehmann-tanimoto} prove $R_e(X)$ has the expected dimension if $X$ is any smooth quartic hypersurface of dimension at least 5, and relate the question to the $a$-values of subvarieties of hypersurfaces and Manin's Conjecture. Browning and Vishe \cite{browning-vishe} use the circle method to prove that $R_e(X)$ is irreducible of the expected dimension for all $e$ when $n$ is exponentially large compared to $d$. Unfortunately, their technique seems to use the exponential growth of $n$ with $d$ in an essential way, and there is no known way to use the circle method to get a polynomial bound even in the special case $e=1$.

\begin{theorem}[cf Theorem \ref{higherdeg}] \label{thm-higherDegreeIntro}
If $X$ is an arbitrary smooth hypersurface of degree $d$ in $\PP^n$, then $\overline{M}_{0,0}(X,e)$ (and hence, $R_e(X)$) is irreducible of the expected dimension provided $d \leq \frac{e+n}{e+1}$.
\end{theorem}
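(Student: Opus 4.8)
The plan is to induct on the degree $e$, using the Kontsevich space $\overline{M}_{0,0}(X,e)$ and its boundary, with the line result providing the base case. First I note that the hypothesis is monotone in $e$: a short computation shows $\frac{e+n}{e+1}\le\frac{(e-1)+n}{e}$, so the bound $d\le\frac{e+n}{e+1}$ at degree $e$ implies the corresponding bound at every lower degree; in particular the base case $e=1$ reads $n\ge 2d-1$, which is exactly the range in which Corollary \ref{cor-irrLines} guarantees that $F_1(X)=\overline{M}_{0,0}(X,1)$ is irreducible of the expected dimension. To make the induction go through I would actually carry the stronger hypothesis that the one-pointed space $\overline{M}_{0,1}(X,e')$ is irreducible of the expected dimension and that the evaluation map $\ev\colon\overline{M}_{0,1}(X,e')\to X$ is dominant with irreducible general fiber, for all $e'<e$.

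For the inductive step I would analyze the boundary divisor $\Delta_{e-1,1}\subset\overline{M}_{0,0}(X,e)$ parametrizing stable maps whose domain breaks into a degree $e-1$ component and a line. This locus is the image of the gluing map from the fiber product $\overline{M}_{0,1}(X,e-1)\times_X\overline{M}_{0,1}(X,1)$, taken over the two evaluation maps. Using the inductive values of the dimensions one computes
\[
\dim\bigl(\overline{M}_{0,1}(X,e-1)\times_X\overline{M}_{0,1}(X,1)\bigr)=e(n+1-d)+n-5,
\]
exactly one less than the expected dimension $e(n+1-d)+n-4$ of $\overline{M}_{0,0}(X,e)$, so $\Delta_{e-1,1}$ has the expected codimension one. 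Irreducibility of $\Delta_{e-1,1}$ then follows because $\overline{M}_{0,1}(X,e-1)$ and $\overline{M}_{0,1}(X,1)$ are irreducible (induction and the line result) and at least one evaluation map has irreducible general fibers, which makes the fiber product irreducible.

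Next I would run the smoothing-and-degeneration argument familiar from Harris--Roth--Starr and Riedl--Yang. Every irreducible component of $\overline{M}_{0,0}(X,e)$ has dimension at least the expected dimension, so it suffices to show (a) the general point of $\Delta_{e-1,1}$ is smoothable, lying in the closure of the locus of maps with irreducible domain, and (b) every component of $\overline{M}_{0,0}(X,e)$ meets $\Delta_{e-1,1}$, i.e.\ a general degree $e$ curve in any component degenerates to a degree $e-1$ curve with an attached line. For (a) I would show the glued nodal curve is free, i.e.\ $H^1(C,f^*T_X)=0$: in our range $n\ge 2d-1$ a general line has $N_{L/X}$ globally generated and, by induction, the general degree $e-1$ curve is free, so the node-gluing sequence gives the vanishing and unobstructed smoothing. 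Statement (b) is a bend-and-break/deformation argument using dominance of the evaluation maps. Combining (a) and (b), all components contain the irreducible locus $\Delta_{e-1,1}$ in their boundary and are smoothings of it, hence coincide, so $\overline{M}_{0,0}(X,e)$ is irreducible of the expected dimension; carrying the one-pointed fiber statement along closes the induction.

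The main obstacle I anticipate is controlling the fibers of the evaluation map --- the families of curves through a general point --- and it is here that the full strength of the bound $n\ge de+d-e$ (equivalently $d\le\frac{e+n}{e+1}$) is needed, since the degree-$e$ bound exceeds the degree-$(e-1)$ bound by exactly $d-1$. Concretely one must show that the fiber $F_1(X,p)$ of lines through a general point, cut out in $\PP^{n-2}$ by forms of degrees $2,\dots,d$, is irreducible of the expected dimension, and that the analogous incidence loci for the glued curves have the expected dimension with no excess tangency. This is exactly the kind of statement the paper's tangency result, Lemma \ref{lem-polys}, is designed to produce: it lets one verify that the equations defining these fibers behave like a general complete intersection once $n$ is large relative to $d$ and $e$, so that the evaluation maps are dominant with irreducible fibers and $\Delta_{e-1,1}$ is genuinely irreducible of the expected dimension. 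Verifying this transversality uniformly for \emph{all} smooth $X$, rather than merely a general one, is the delicate point, and is where I expect the bulk of the work to lie.
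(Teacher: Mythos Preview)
Your inductive architecture---bend-and-break to find reducible curves, irreducibility of the fiber product $\overline{M}_{0,1}(X,e-1)\times_X\overline{M}_{0,1}(X,1)$, and smoothing of a general glued curve---is the same as the paper's. But there is a genuine gap in step (b), and it is not the one you flag at the end.

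Bend-and-break only produces \emph{some} reducible stable map in each component $M$; it gives no control over the splitting type or over whether the pieces are free. If the reducible map you land on has a non-free component, it need not be a smooth point of $\overline{M}_{0,0}(X,e)$, and you cannot invoke the smoothing/regluing moves needed to slide it into $\Delta_{e-1,1}$. The paper closes this gap with an explicit dimension bound: the locus of stable maps with at least one non-free component has dimension at most $en+d-5$, proved by induction using (i) Theorem~\ref{thm-nonfreelines} for non-free lines and (ii) a direct normal-bundle estimate (Lemma~\ref{stable-bound}) showing that the fiber of $\ev\colon\overline{M}_{0,1}(X,e)\to X$ over \emph{any} point has dimension at most $en-2$. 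The hypothesis $d\le\frac{e+n}{e+1}$ is exactly what makes $en+d-5$ strictly less than the expected dimension, so every component $M$ contains a free curve (giving the expected dimension) and, more to the point, the codimension-one reducible locus in $M$ cannot lie entirely in the non-free locus. Hence the bend-and-break output can be taken to have all components free, and the deformation to $\Delta_{e-1,1}$ goes through.

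Your final paragraph misdiagnoses the obstacle. The issue is not transversality of the equations cutting out $F^p(X)$ at a general point, nor Lemma~\ref{lem-polys}; those control lines through a \emph{general} point and feed into irreducibility of the fiber product, which you already have. What is missing is a bound valid over \emph{every} point---precisely the content of Lemma~\ref{stable-bound} together with the non-free-line estimate of Theorem~\ref{thm-nonfreelines}---to rule out components supported over special loci of $X$.
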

Our results settle the strong version of geometetric Manin's Conjecture, as outlined by Lehmann and Tanimoto \cite{lehmann-tanimoto}, in the given degree range. Since it applies to the entire Kontsevich space $\overline{M}_{0,0}(X,e)$, Theorem \ref{thm-higherDegreeIntro} proves that the relevant Gromov-Witten invariants of such hypersurfaces are enumerative. For enumerativity, this result is nearly sharp, see the discussion following Proposition \ref{prop-dJBoptimal}. If one is only concerned with irreducible rational curves, it is a drawback that our bounds work only for finitely many $e$ unlike Browning and Vishe's results \cite{browning-vishe}. However, for a fixed $e$, the bound for $n$ is linear in $d$. Thus, we improve on Browning and Vishe's results for small degrees $e$.


\subsection*{Acknowledgements}
We would like to thank Izzet Coskun and Brian Lehmann for helpful conversations. We would also like to thank the anonymous referees for many useful comments. During the work on this paper, Beheshti was partially supported by Simons Collaboration Grant 359447 and Riedl was partially supported by an AMS-Simons Travel Grant.

\subsection*{Outline of Paper} In Section \ref{sec-caseoflines}, we prove the de Jong-Debarre Conjecture for $n$ approximately $2d$. In Section \ref{sec-kplanes}, we generalize these results to $k$-planes, although we have to weaken the bound slightly when working with $k$-planes. In Section \ref{sec-unirationality} we prove unirationality of arbitrary smooth hypersurfaces in sufficiently high degree. In Section \ref{sec-higherdegree} we prove that the spaces $R_e(X)$ have the expected dimension for small $e$.

\section{The case of lines}
\label{sec-caseoflines}
We start by proving that $F_k(X)$ has the expected dimension in the special case $k=1$. Considering lines separately allows us to make a few improvements of the result for general $k$. Additionally, the case of lines is a nice example of the general technique for $k$-planes. While many components of the proof for $k$-planes are the same, the amount of notation somewhat obscures the essential ideas.

\subsection{Dimension}

We start by proving that any component of $F_1(X)$ has the expected dimension. The idea of the proof is as follows. We can write down explicit equations $f_1, \dots, f_d$ for the space of lines $F^p(X)$ passing through a given point $p$ of $X$, and note that $f_d$ will be smooth. We show that for a general point $p$ on a given line $\ell$, the tangent space to $F^p(X)$ at $\ell$ will be locally cut out by the first $\delta$ equations, $f_1, \dots, f_\delta$. We then apply the following lemma (Lemma \ref{lem-polys}) which roughly says that smooth equations of higher degree tend not to be tangent to varieties cut out by equations of lower degree.

\begin{lemma}
\label{lem-polys}
Let $h_1, \dots, h_r$ be homogeneous polynomials on $\PP^n$ of degree strictly less than $d$ and let $h$ be a polynomial of degree $d$ such that $V(h)$ has singular locus of dimension $s$, where $s=-1$ if $V(h)$ is smooth. Then the locus where $V(h)$ is tangent to $V(h_1,\dots,h_r)$ has dimension at most $r+s$.
\end{lemma}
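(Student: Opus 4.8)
The plan is to reformulate tangency intrinsically and then bound the tangency locus $T$ by splitting it according to the Gauss map of $V(h)$. Write $W=V(h_1,\dots,h_r)$. A point $p\in V(h)\cap W$ lies in $T$ precisely when $dh_p$ lies in the span of $d(h_1)_p,\dots,d(h_r)_p$; equivalently (by Euler's relation the condition $h(p)=0$ comes for free), $p$ is a critical point of the restriction $h|_W$. I would first dispose of the part of $T$ lying in $\operatorname{Sing}(V(h))$: it sits inside a locus of dimension $s$, which is harmless since $s\le r+s$. It therefore suffices to bound $T_{\mathrm{sm}}:=T\cap\bigl(V(h)\setminus\operatorname{Sing}(V(h))\bigr)$. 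On $T_{\mathrm{sm}}$ the Gauss map $\gamma\colon p\mapsto[dh_p]\in(\PP^n)^\ast$ is defined, and for each irreducible component $Z$ of $T_{\mathrm{sm}}$,
\[ \dim Z=\dim\overline{\gamma(Z)}+\bigl(\text{dimension of the general fiber of }\gamma|_Z\bigr). \]
The strategy is to show the fiber dimension is at most $s+1$ and the image dimension is at most $r-1$, so that $\dim Z\le(r-1)+(s+1)=r+s$.

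The fiber bound is the clean part. Fix a fiber $F$ of $\gamma|_{T_{\mathrm{sm}}}$ over a hyperplane $H$ and choose coordinates so that $H=\{x_n=0\}$, i.e.\ $H=[0:\dots:0:1]$ as a point of $(\PP^n)^\ast$. For every $p\in F$ the equality $\gamma(p)=H$ forces $\partial_0h(p)=\dots=\partial_{n-1}h(p)=0$ while $\partial_nh(p)\neq0$. Hence the closure $\overline F$ is contained in $V(\partial_0h,\dots,\partial_{n-1}h)$, and therefore $\operatorname{Sing}(V(h))\cap\overline F=\{x\in\overline F:\partial_nh(x)=0\}$ is cut out of $\overline F$ by the single form $\partial_nh$. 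Since $\partial_nh$ is not identically zero on $\overline F$ (it is nonzero along the dense subset $F\subseteq V(h)\setminus\operatorname{Sing}(V(h))$), this is a proper closed subvariety, so $s\ge\dim\bigl(\operatorname{Sing}(V(h))\cap\overline F\bigr)\ge\dim\overline F-1$, giving $\dim F\le s+1$. Note that this argument uses nothing about the $h_i$ and applies to every fiber, general or not.

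The image bound $\dim\overline{\gamma(T_{\mathrm{sm}})}\le r-1$ is where the hypothesis $\deg h_i<d$ must enter, and I expect it to be the main obstacle. The natural way to package it is through the auxiliary linear system spanned by $h$ together with $x_0^{\,d-\deg h_i}h_i$: because each $h_i$ has degree strictly less than $d$, every member is homogeneous of degree $d$ with leading form $h$, so the members all agree with $V(h)$ at infinity and differ only by lower-order terms. A direct computation with Euler's relation identifies a point of $T$ (in the chart $x_0\neq0$) with a point at which some member of the system is singular, the coefficients of that member recording the scalars expressing $dh_p$ in terms of the $d(h_i)_p$. This realizes $T$ inside an incidence variety fibered over the $r$-dimensional family of such members, and reduces the whole estimate to the statement that the singular locus of the general member has dimension at most $s$ in the affine chart. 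The point is that, since $h$ is the common leading form, these singular loci should be no worse than $\operatorname{Sing}(V(h))$ for general coefficients; making this precise—controlling how the singular locus can jump over special members, equivalently ruling out large critical loci of $h|_W$—is the crux. This is exactly where lowering the degree of the $h_i$ is essential: if some $h_i$ had degree $d$ one could take $h_i=h$, making all of $V(h)$ tangent to $W$ and destroying the bound.

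Combining the two bounds on each component gives $\dim T_{\mathrm{sm}}\le r+s$, and together with $\dim\bigl(T\cap\operatorname{Sing}(V(h))\bigr)\le s\le r+s$ this yields $\dim T\le r+s$. I would invoke the characteristic-zero hypothesis only where it is genuinely needed—generic smoothness for the linear-system step and the good behavior of the Gauss map—and I would treat the locus where the $d(h_i)_p$ fail to be linearly independent separately, since there the relevant spans drop in dimension and the same counting goes through with room to spare.
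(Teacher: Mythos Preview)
Your linear-system idea---forming $g_\alpha = h - \sum_i \alpha_i h_i x_0^{\,d-\deg h_i}$ and identifying the tangency locus (in the chart $x_0\neq 0$) with the union over $\alpha$ of the singular loci of the $V(g_\alpha)$---is exactly the paper's construction, and once completed it is the entire proof. The gap is that you do not supply the one observation that makes it work. Because $d-\deg h_i\geq 1$, every correction term $\alpha_i h_i x_0^{\,d-\deg h_i}$ is divisible by $x_0$, so $V(g_\alpha,x_0)=V(h,x_0)$ for \emph{every} $\alpha$, not merely for generic ones. Taking $x_0$ general, $V(h,x_0)$ has singular locus of dimension $s-1$ (or is smooth when $s=-1$), hence so does $V(g_\alpha,x_0)$; this forces the singular locus of $V(g_\alpha)$ to have dimension at most $s$ (respectively at most $0$) \emph{uniformly} in $\alpha$. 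There is thus no ``jump over special members'' to control, and the incidence correspondence over the $r$-dimensional space of $\alpha$'s has dimension at most $r+s$ immediately. In the smooth case one gains one more: since $g_0=h$ itself is smooth, the locus of $\alpha$ for which $V(g_\alpha)$ is singular has codimension at least $1$ in $\mathbb{A}^r$, giving $r-1$. You correctly locate the crux but then describe it as an obstacle requiring control of jumping loci; it is in fact a one-line identity.

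The Gauss-map scaffolding is a detour. Your fiber bound $\dim\gamma^{-1}(H)\leq s+1$ is correct and pleasant, but the companion bound $\dim\overline{\gamma(T_{\mathrm{sm}})}\leq r-1$ is never established: the paragraph you offer for it is really an argument for $\dim T\leq r+s$ via the incidence correspondence over $\mathbb{A}^r$, not a bound on the Gauss image in $(\PP^n)^\ast$, which is a different map with a different target. Once the incidence argument is completed as above it delivers the full estimate by itself, and the Gauss decomposition plays no role.
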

\begin{proof}
Given a polynomial $g$, let $L_p(g)$ be the linear part of $g$ near $p$. Fixing $h_1, \dots, h_r$ and $h$, let $x_0$ be a general homogeneous coordinate on $\PP^n$. Then $V(h)$ will be tangent to $V(h_1, \dots, h_r)$ at $p \in \PP^n \setminus V(x_0)$ if and only if $L_p(h)$ can be written as a linear combination of the $L_p(h_i)$. This will happen if and only if the vanishing locus of the polynomial $h - \sum_i \alpha_i h_i x_0^{d - \deg h_i}$ is singular at $p$ for some tuple of numbers $(\alpha_1, \dots, \alpha_r)$. For each $\alpha = (\alpha_1, \dots, \alpha_r)$, let $g_{\alpha}$ be the polynomial $h - \sum_i \alpha_i h_i x_0^{d - \deg h_i}$.

We start by considering the case where $s \geq 0$. Since $x_0$ is general, we have that $V(h, x_0)=V(g_{\alpha}, x_0)$ is singular in dimension $s-1$. This implies that $V(g_{\alpha})$ has singular locus of dimension at most $s$. Since there is at most an $r$-dimensional family of $\alpha_1, \dots, \alpha_r$, the result follows.

Now suppose $V(h)$ is smooth. Then $V(h,x_0)=V(g_{\alpha}, x_0)$ is also smooth, so that $V(g_{\alpha})$ has at most finitely many singularities. There is an $r$-dimensional family of $\alpha_1, \dots, \alpha_r$. However, the set of $\alpha$ for which $g_{\alpha}$ has singular points is codimension at least $1$ in the space of $\alpha$, since $g_{(0, \dots, 0)} = h$. Thus, the union of the singular loci of the $g_{\alpha}$ will have dimension at most $r-1$.
\end{proof}

We now describe the equations cutting out the space of lines $F^p(X)$ contained in $X$ passing through a point $p$. These results are well-known in the literature, see \cite{harris-roth-starr} for instance. Let $X = V(f) \subset \PP^n$ be a hypersurface of degree $d$. Then given a point $p$ and a homogeneous coordinate $x_0$, we can expand the equation of $f$ around $p$ with respect to $x_0$. If we choose coordinates so that $p = [1, 0, \dots, 0]$, then we can write
\begin{equation}\label{eq-homPiecesOfF} f = \sum_{i=1}^d f_i x_0^{d-i} . \end{equation}
We can view $x_1, \dots, x_n$ as homogeneous coordinates on the space $\PP^{n-1}$ of lines in $\PP^n$ through $p$. We have $F^p(X) = V(f_1, \dots, f_d)$. The \emph{expected dimension} of $F^p(X)$ is $n-1-d$. For $n \geq d+1$, we see that $F^p(X)$ will have the expected dimension precisely when it is a complete intersection.

Our analysis will rest on studying the tangent space to $F^p(X)$ at $\ell$. By standard deformation theory (cf \cite{kollar}, Corollary II.3.10.1), the tangent space to $F^p(X)$ at $\ell$ is given by $H^0(N_{\ell/X}(-p))$, where $N_{\ell/X}$ is the normal bundle of $\ell$ in $X$. If $X$ is smooth, $N_{\ell/X}$ will be a vector bundle. Vector bundles on $\PP^1$ split as a sum of line bundles by Grothendieck's lemma (see \cite{hartshorne}, Ex V.2.4), so we can write $N_{\ell/X} = \bigoplus_{i=1}^{n-2} \OO(a_i)$. Using the short exact sequence
\[ 0 \to N_{\ell/X} \to N_{\ell/ \PP^n} \to N_{X/\PP^n}|_{\ell} \to 0 \]
and the fact that $N_{\ell/ \PP^n} = \OO(1)^{n-1}$ and $N_{X/\PP^n}|_{\ell} = \OO_{\ell}(d)$ we see that $\sum_i a_i = n-d-1$ and that $a_i \leq 1$ for all $i$. From this it follows that $N_{\ell/X}$ will be globally generated precisely when $N_{\ell/X}$ has $n-d-1$ $\OO(1)$ summands and $d-1$ $\OO$ summands. We refer to lines with globally generated $N_{\ell/X}$ as free lines. By standard arguments (see for example Corollary II.3.10.1 from \cite{kollar}), it follows that if $\ell$ is a general line in a family of lines sweeping out $X$, then $N_{\ell/X}$ is globally generated. We state this as a proposition for later use.

\begin{proposition}[cf Corollary II.3.10.1 from \cite{kollar}]
\label{prop-globGenGeneralPoint}
If $X$ is smooth and $S \subset F_1(X)$ is a family of lines that sweep out all of $X$, then a general line in $S$ is free. Thus, if $S$ is a family of non-free lines, then the lines in $S$ must lie in some proper subvariety of $X$.
\end{proposition}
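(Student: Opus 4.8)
The plan is to reduce freeness of a general line in $S$ to surjectivity of the evaluation map on global sections of the normal bundle at a general point, and then to extract that surjectivity from generic smoothness of the evaluation morphism. This is the standard argument (the content of Corollary II.3.10.1 of \cite{kollar}), adapted to our splitting description of $N_{\ell/X}$.

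First I would form the universal family of lines $\pi \colon \mathcal{C} \to S$, which is a $\PP^1$-bundle over the (smooth locus of the) base, together with its evaluation morphism $\ev \colon \mathcal{C} \to X$ sending $(\ell,p)$ to $p$. The hypothesis that the lines in $S$ sweep out $X$ says precisely that $\ev$ is dominant. Since we work in characteristic $0$, generic smoothness gives a dense open subset of $\mathcal{C}$ on which $\ev$ is smooth; in particular its differential $d\ev$ is surjective at a general pair $(\ell,p)$, with $\ell$ a general line of $S$ and $p$ a general point of $\ell$.

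Next I would unwind this differential. From the relative tangent sequence $0 \to T_p\ell \to T_{(\ell,p)}\mathcal{C} \to T_\ell S \to 0$, the restriction of $d\ev$ to $T_p\ell$ is the inclusion $T_p\ell \hookrightarrow T_pX$, while a tangent vector of $S$ at $\ell$ maps, under $T_\ell S \to T_\ell F_1(X) = H^0(N_{\ell/X})$, to a first-order deformation of $\ell$ inside $X$, whose image in $T_pX/T_p\ell = (N_{\ell/X})_p$ is the value of that section at $p$. Hence surjectivity of $d\ev$ onto $T_pX$ is equivalent to surjectivity of the composite $T_\ell S \to H^0(N_{\ell/X}) \to (N_{\ell/X})_p$, so in particular the evaluation $H^0(N_{\ell/X}) \to (N_{\ell/X})_p$ is surjective for general $p \in \ell$.

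Finally I would invoke the splitting $N_{\ell/X} = \bigoplus_i \OO(a_i)$ recalled above. If some $a_i < 0$, that summand has no global sections, so the image of $H^0(N_{\ell/X})$ in \emph{every} fiber $(N_{\ell/X})_p$ lies in the proper subspace spanned by the nonnegative summands, contradicting surjectivity at the general $p$. Thus all $a_i \geq 0$; together with the already established bound $a_i \leq 1$, this makes $N_{\ell/X}$ globally generated, so $\ell$ is free. The second assertion is the contrapositive: if every line of $S$ were non-free, then $\ev$ could not be dominant, so the union of the lines in $S$ is a proper closed subvariety of $X$. I expect the main technical point to be the careful identification of $d\ev$ with the evaluation of sections of $N_{\ell/X}$ via the first-order deformation theory of lines in $X$ (and checking $\pi$ is a $\PP^1$-bundle over the relevant locus); the remaining steps are formal.
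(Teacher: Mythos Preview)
The paper does not supply its own proof of this proposition: it simply remarks that the statement follows ``by standard arguments (see for example Corollary II.3.10.1 from \cite{kollar})'' and states it for later use. Your proposal correctly reconstructs exactly that standard argument---generic smoothness of the evaluation map in characteristic $0$, identification of $d\ev$ with evaluation of sections of $N_{\ell/X}$, and the splitting observation that any negative summand would obstruct fiberwise surjectivity---so it matches the approach the paper defers to.
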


Let $\ell$ be a line in $X$. We show that for a general point $p$ on $\ell$, the tangent space to $F^p(X)$ depends only on the lower-degree equations.

\begin{lemma}
\label{lem-linesDependence}
Let $X = V(f)$ be a hypersurface in $\PP^n$, and let $\ell \in F_1(X)$ be a line. Let $p \in \ell$ be a general point, let $x_0$ be a general coordinate on $\PP^n$, and let $f_1, \dots, f_d$ be the expansion of $f$ around $p$ as described in (\ref{eq-homPiecesOfF}). Then there exists an integer $\delta$ with $0 \leq \delta \leq d$ such that $V(f_1), \dots, V(f_\delta)$ meet transversely at $\ell$, while $T_{\ell} V(f_j)$ contains $T_{\ell} V(f_1, \dots, f_\delta)$ for all $j > \delta$.
\end{lemma}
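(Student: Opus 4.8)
The plan is to reduce the statement to a linear-algebra fact about the differentials $df_i([\ell])$, identify these differentials with the successive derivatives of the normal-bundle map of $\ell$, and then recognize the desired flag structure as an instance of the classical theory of osculating spaces, where characteristic zero enters through the Wronskian. Throughout, write $[\ell]$ for the point of $\PP^{n-1}$ (with coordinates $x_1,\dots,x_n$) corresponding to $\ell$.

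First I would choose coordinates so that $p=[1:0:\dots:0]$ and $\ell=\{x_2=\dots=x_n=0\}$, so that $[\ell]=[1:0:\dots:0]$ and, in the chart $x_1=1$, the tangent space $T_{[\ell]}\PP^{n-1}$ is $\spa\{dx_2,\dots,dx_n\}$ and $T_\ell V(f_i)=\ker df_i([\ell])$. Thus the entire statement concerns the rank and dependency structure of the $d$ covectors $df_1([\ell]),\dots,df_d([\ell])$. Set $\Phi_j:=\left.\tfrac{\partial f}{\partial x_j}\right|_\ell$ for $j=2,\dots,n$; each $\Phi_j$ is a form of degree $d-1$ on $\ell$, and $\Phi=(\Phi_2,\dots,\Phi_n)$ is exactly the map $\OO_\ell(1)^{n-1}\to\OO_\ell(d)$ whose kernel is $N_{\ell/X}$. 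Expanding $f=\sum_i f_i x_0^{d-i}$ as in (\ref{eq-homPiecesOfF}) and using that $\partial f_i/\partial x_j$ is homogeneous of degree $i-1$, a direct computation shows that the coefficient of $t^{i-1}$ in $\Phi_j$ (where $t$ is the affine parameter on $\ell$, with $p$ at $t=t_0$) equals the $j$-th component of $df_i([\ell])$. Consequently $df_i([\ell])=\tfrac{1}{(i-1)!}\Phi^{(i-1)}(t_0)$ up to a nonzero scalar: the $i$-th differential is the $(i-1)$-st derivative of the normal-bundle map at $p$. Because different general choices of $x_0$ only reparametrize $\ell$, and reparametrization acts by a lower-triangular transformation with nonzero diagonal on the $\Phi^{(i-1)}(t_0)$, the spans $\spa\{\Phi(t_0),\dots,\Phi^{(i-1)}(t_0)\}$ are independent of that choice.

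With this identification, let $V\subseteq\mathbb{C}^{n-1}$ be the linear span of the image of $\Phi$ and put $\delta=\dim V$; writing $\Phi(t)=\sum_{k=0}^{d-1}c_k t^k$ gives $V=\spa\{c_0,\dots,c_{d-1}\}$, so $0\le\delta\le d$. The key step is the osculation fact: for general $p$ the vectors $\Phi(t_0),\Phi'(t_0),\dots,\Phi^{(\delta-1)}(t_0)$ are linearly independent. To prove it, change coordinates on $\mathbb{C}^{n-1}$ so that $V=\mathbb{C}^\delta$; then $\Phi=(\psi_1,\dots,\psi_\delta,0,\dots,0)$ with $\psi_1,\dots,\psi_\delta$ linearly independent over $\mathbb{C}$, since otherwise the image of $\Phi$ would lie in a proper subspace of $V$. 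In characteristic zero the Wronskian of linearly independent polynomials is a nonzero polynomial, hence nonvanishing at a general $t_0$, which is exactly the asserted independence. Since $V$ is a fixed linear subspace, every derivative $\Phi^{(i-1)}(t_0)$ lies in $V$, so $\Phi^{(i-1)}(t_0)\in\spa\{\Phi(t_0),\dots,\Phi^{(\delta-1)}(t_0)\}$ for all $i$.

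Translating back, for general $p$ the covectors $df_1([\ell]),\dots,df_\delta([\ell])$ are linearly independent, so $V(f_1),\dots,V(f_\delta)$ meet transversely at $\ell$, while for each $j>\delta$ we have $df_j([\ell])\in\spa\{df_1([\ell]),\dots,df_\delta([\ell])\}$, whence $T_\ell V(f_j)=\ker df_j([\ell])\supseteq\bigcap_{i\le\delta}\ker df_i([\ell])=T_\ell V(f_1,\dots,f_\delta)$, as required. I expect the main obstacle to be the osculation step: the genuine content is the monotonicity that, \emph{in the degree ordering}, once the differentials become dependent they stay dependent, and this is supplied by the Wronskian criterion in characteristic zero together with the care needed to ensure $p$ is general enough that the relevant Wronskian does not vanish there (the generality of $x_0$ being needed only so that $\ell\not\subseteq V(x_0)$ and the expansion is valid).
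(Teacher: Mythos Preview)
Your argument is correct and takes a genuinely different route from the paper's. The paper proceeds by an infinitesimal deformation of $p$ along $\ell$: the change of coordinates $x_1\mapsto x_1+\epsilon x_0$ sends $L(f_i)\mapsto L(f_i)+\epsilon\, i\, L(f_{i+1})$, and if the first dependency among the $L(f_i)$ occurred at some index $m+1$ with $m<\delta$, one shows that a suitable partial rank $\dim\spa\{L(f_1),\dots,L(f_j)\}$ strictly increases after the deformation, contradicting the generality of $p$. You instead identify the $L(f_i)$ globally as the Taylor coefficients at $p$ of the normal-bundle map $\Phi:\ell\to\mathbb{C}^{n-1}$ and then read off the conclusion from the Wronskian criterion for the osculating flag of a curve. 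Your approach is more conceptual and makes explicit two points the paper handles separately or implicitly: it exhibits $\delta$ directly as the dimension of the image of $\mathbb{C}^{n-1}\to H^0(\OO_\ell(d-1))$, hence as $n-1-h^0(N_{\ell/X}(-1))$, which the paper only records after the lemma; and it isolates the characteristic-zero hypothesis cleanly in the Wronskian step (in the paper's argument it enters through the nonvanishing of the scalar $i$). The paper's argument, on the other hand, is entirely elementary and self-contained, needing no appeal to osculation theory. One trivial wording issue: your phrase ``coefficient of $t^{i-1}$'' should be ``coefficient of $(t-t_0)^{i-1}$,'' but the next formula $df_i([\ell])=\tfrac{1}{(i-1)!}\Phi^{(i-1)}(t_0)$ makes your meaning unambiguous.
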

\begin{proof}
Choose coordinates so that $p$ is $[1,0,\dots,0]$ and $\ell$ is $V(x_2, \dots, x_n)$. Each $f_i$ is a homogeneous polynomial in $x_1, \dots, x_n$, viewed as polynomials on the $\PP^{n-1}$ of lines passing through $p$. Let $L(f_i)$ be the linear part of $f_i$ at the line $\ell$. Let $\delta$ be the dimension of the span of $L(f_1), \dots, L(f_d)$. If $\delta=d$ we are done, so suppose that $L(f_1), \dots, L(f_m)$ are independent, but that $L(f_{m+1})$ is a linear combination of $L(f_1), \dots, L(f_m)$ for some $m < \delta$. Then for some $c > 1$, $L(f_{m+c})$ is not a linear combination of $L(f_1), \dots, L(f_{m+c-1})$.

Now deform $p$ along $\ell$ to first order and change coordinates so that $p$ is still given by $[1,0, \dots, 0]$ and $\ell$ is $V(x_2, \dots, x_n)$. This corresponds to the change of coordinates $x_1 \mapsto x_1 + \epsilon x_0$, which changes $L(f_i)$ to $L(f_i)+ \epsilon i L(f_{i+1})$, where $L(f_{d+1})$ is defined to be $0$. Under this deformation, $L(f_{m+c-1})$ becomes $L(f_{m+c-1})+ \epsilon (m+c-1) L(f_{m+c})$, which is not a linear combination of $L(f_1), \dots, L(f_m)$. This contradicts generality of $p$. Thus, the result follows.
\end{proof}

We observe that the integer $\delta$ from Lemma \ref{lem-linesDependence} is the codimension $T_{\ell} F^p(X)$ in $T_{\ell} F^p(\PP^n)$, which by deformation theory is the same as $H^0(N_{\ell/X}(-p))$, i.e. $H^0(N_{\ell/X}(-1))$. Thus, the integer $\delta$ can be computed via the equation $h^0(N_{\ell/X}(-1)) = n-1-\delta$.

\begin{theorem}
\label{thm-nonfreelines}
Let $X$ be a smooth degree $d$ hypersurface in $\PP^n$. Suppose $S \subset F_1(X)$ is a family of lines sweeping out an irreducible subvariety $Y \subset X$. Let $\ell \in S$ be a general line of $S$ and $p \in \ell$ be a general point. Let $\delta = n-1-h^0(N_{\ell/X}(-1))$. Then at least one of the following is satisfied: 
\begin{enumerate}
\item $\delta =d$ and $N_{\ell/X}$ is globally generated or
\item $\dim (S \cap F^p(X)) \leq \delta -1$ and $\dim S \leq \dim Y + \delta-2$.
\end{enumerate}
\end{theorem}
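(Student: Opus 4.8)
The plan is to split on whether $\delta = d$, and in the case $\delta < d$ to reduce the whole statement to the single inequality $\dim(S \cap F^p(X)) \le \delta - 1$, which I would extract from Lemma~\ref{lem-polys}. First I would dispatch alternative (1) by a splitting computation. Writing $N_{\ell/X} = \bigoplus_{i=1}^{n-2}\OO(a_i)$ with $\sum_i a_i = n-d-1$ and each $a_i \le 1$, twisting down shows $h^0(N_{\ell/X}(-1)) = \#\{i : a_i = 1\}$, so $\delta = (n-1) - \#\{i : a_i = 1\}$. If $\delta = d$, then exactly $n-d-1$ of the $a_i$ equal $1$; the remaining $d-1$ summands are nonpositive and sum to $0$, hence all vanish, so $N_{\ell/X} = \OO(1)^{n-d-1}\oplus\OO^{d-1}$ is globally generated and alternative (1) holds. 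Thus I may assume $\delta < d$ and aim for (2).

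Next I would set up the dimension bookkeeping showing the two clauses of (2) are equivalent. Let $\pi\colon\mathcal S \to S$ be the universal line; its fibers are copies of $\PP^1$, so $\dim\mathcal S = \dim S + 1$, and the evaluation $\ev\colon\mathcal S \to Y$ is dominant onto the irreducible $Y$. Because $\ell$ is general in $S$ and $p$ general on $\ell$, the point $p$ is general in $Y$, and $\ev^{-1}(p) = S \cap F^p(X)$; comparing dimensions gives $\dim(S\cap F^p(X)) = \dim S + 1 - \dim Y$. Hence $\dim(S\cap F^p(X)) \le \delta - 1$ forces $\dim S \le \dim Y + \delta - 2$, and it suffices to prove the first inequality.

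The core step is to place $S\cap F^p(X)$ inside the tangency locus governed by Lemma~\ref{lem-polys}. With $p = [1,0,\dots,0]$ and $x_0$ a general coordinate, $f_d$ is the restriction of $f$ to the general hyperplane $\{x_0=0\}$, so $V(f_d)\subset\PP^{n-1}$ is smooth by Bertini and we may take $s=-1$; and since $\delta<d$ the forms $f_1,\dots,f_\delta$ have degrees $1,\dots,\delta$, all strictly below $d$. Lemma~\ref{lem-polys} then bounds the locus $T_p$ where $V(f_d)$ is tangent to $V(f_1,\dots,f_\delta)$ by $\dim T_p \le \delta - 1$. To see $S\cap F^p(X)\subseteq\overline{T_p}$, I would apply Lemma~\ref{lem-linesDependence} in the family: for a general pair $(\ell',p')\in\mathcal S$, for which $\delta(\ell')=\delta$ by lower-semicontinuity of $\delta$ and generality of $\ell$, the expansion of $f$ about $p'$ satisfies $T_{\ell'}V(f_d)\supseteq T_{\ell'}V(f_1,\dots,f_\delta)$, i.e.\ $\ell'$ is a tangency point for the expansion about $p'$. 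This holds on a dense open $U\subseteq\mathcal S$; a fiber-dimension argument then shows that for general $p$ the slice $U\cap\ev^{-1}(p)$ is dense in every top-dimensional component of $\ev^{-1}(p)=S\cap F^p(X)$, so that $S\cap F^p(X)\subseteq\overline{T_p}$ and $\dim(S\cap F^p(X))\le\delta-1$.

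The step I expect to be the main obstacle is this last transfer of genericity: Lemma~\ref{lem-linesDependence} only guarantees the tangency when $p'$ is a general point of $\ell'$, whereas I need it for a general line of $S$ through the single fixed general point $p$. The content is the standard but delicate principle that a general point of a general fiber of the dominant map $\ev\colon\mathcal S\to Y$ is a general point of $\mathcal S$, combined with the observation that a dense subset of $S\cap F^p(X)$ lying in the tangency locus $T_p$ forces the whole family into its closure $\overline{T_p}$. Once this is in hand, the remaining assertions follow from the bookkeeping above.
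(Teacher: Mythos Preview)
Your proposal is correct and follows essentially the same route as the paper: dispatch $\delta=d$ via the splitting of $N_{\ell/X}$, then for $\delta<d$ use Lemma~\ref{lem-linesDependence} to place the general line of $S\cap F^p(X)$ in the tangency locus of $V(f_d)$ along $V(f_1,\dots,f_\delta)$ and invoke Lemma~\ref{lem-polys} with $s=-1$ to get the bound $\delta-1$, recovering $\dim S$ from the incidence correspondence. The one place you are more careful than the paper is the genericity transfer you flag at the end; the paper compresses this into the single clause ``since $\ell$ is general, it follows $V(f_d)$ is tangent to $V(f_1,\dots,f_\delta)$ at every point of $\ev^{-1}(p)$,'' and your fiber-dimension justification is exactly the content behind that sentence.
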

\begin{proof}
Since $N_{\ell/X}(-1)$ has rank $n-2$ and degree $-d+1$, we see that $H^0(N_{\ell/X}(-1))$ has dimension at least $n-1-d$. Since $N_{\ell/X}$ injects into $\OO(1)^{n-1}$, every summand of $N_{\ell/X}(-1)$ has degree at most 0, and $h^0(N_{\ell/X}(-1)) = n-1-d$ if and only if $N_{\ell/X}$ is globally generated. Thus, if $\delta$ equals $d$, we see that $N_{\ell/X}$ is globally generated. 

Now suppose $\delta$ is less than $d$. Select a general homogeneous coordinate $x_0$. Since $x_0$ is general, $V(f_d) = V(f,x_0)$ is smooth. Let $\uU$ be the incidence-correspondence of pairs $(p,\ell)$ where $p$ is in $\ell$ and $\ell$ is in $S$, and let $\ev: \uU \to Y$ be projection onto the first coordinate. Since $p$ and $\ell$ are general, we may bound the dimension of $\uU$ by computing the dimension of the component of $F^p(X)$ containing $\ell$. By Lemma \ref{lem-linesDependence}, $T_{\ell} V(f_d)$ contains $T_{\ell} V(f_1, \dots, f_\delta)$, and since $\ell$ is general, it follows $V(f_d)$ is tangent to $V(f_1, \dots, f_\delta)$ at every point of $\ev^{-1}(p)$. By Lemma \ref{lem-polys}, this implies that the dimension of the component of $F^p(X)$ containing $\ell$ is at most $\delta-1$ as required. Thus, $\dim \uU \leq \dim Y + \delta -1$. Since $\dim \uU = \dim S +1$, the result follows.
\end{proof}

From this, we deduce the de Jong-Debarre Conjecture for $n \geq 2d-4$.

\begin{corollary}
If $X \subset \PP^n$ is a smooth hypersurface of degree $d$ with $n \geq 2d-4$, then every component of $F_1(X)$ has the expected dimension $2n-d-3$.
\end{corollary}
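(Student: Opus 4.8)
The plan is to prove the two inequalities separately. The lower bound $\dim S \ge 2n-d-3$ for every irreducible component $S$ of $F_1(X)$ is automatic: inside $\gG(1,n)$, which has dimension $2n-2$, the scheme $F_1(X)$ is locally cut out by the $d+1$ equations expressing that a line lies on $X$, so every component has codimension at most $d+1$. Thus it remains to prove $\dim S \le 2n-d-3$ for each component. Fix such an $S$, let $\ell \in S$ be general, $p \in \ell$ general, and set $\delta = n-1-h^0(N_{\ell/X}(-1))$; I would then apply Theorem \ref{thm-nonfreelines} to $S$.

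Before splitting into the cases of that theorem, I would record a purely numerical observation about the splitting type. Writing $N_{\ell/X} = \bigoplus_i \OO(a_i)$ with $a_i \le 1$ and $\sum_i a_i = n-d-1$, one computes $\sum_i (a_i+1) = (n-d-1)+(n-2) = 2n-d-3$, while $h^0(N_{\ell/X}) = \sum_{a_i \ge 0}(a_i+1)$. Hence
\[ h^0(N_{\ell/X}) = 2n-d-3 + \sum_{a_i \le -2}(-a_i-1) \ge 2n-d-3, \]
with equality precisely when $N_{\ell/X}$ has no summand of degree $\le -2$. Since $\dim S \le \dim T_\ell F_1(X) = h^0(N_{\ell/X})$, this already settles every component whose general line has all normal bundle summands $\ge -1$; in particular it settles the free case $\delta = d$, where $N_{\ell/X} = \OO(1)^{n-d-1}\oplus \OO^{d-1}$ and the tangent space has dimension exactly $2n-d-3$. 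This isolates the only genuinely hard situation: the general line of $S$ is non-free and $N_{\ell/X}$ carries a summand of degree $\le -2$.

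In that remaining situation I would invoke case (2) of Theorem \ref{thm-nonfreelines}, which gives $\dim S \le \dim Y + \delta - 2$, and combine it with the incidence bookkeeping from the proof of that theorem. With $\uU$ the incidence variety of pairs $(p,\ell)$, $p\in\ell\in S$, and $\ev : \uU \to Y$, we have $\dim S + 1 = \dim \uU = \dim Y + e$ where $e = \dim(S \cap F^p(X))$, so $\dim S = \dim Y + e - 1$. Here $e$ is bounded two ways: $e \le \delta-1$ by case (2), and $e \le h^0(N_{\ell/X}(-1)) = n-1-\delta$ because $S \cap F^p(X)$ sits inside $F^p(X)$. Since the lines are non-free they fail to sweep out all of $X$ (Proposition \ref{prop-globGenGeneralPoint}), so $\dim Y \le n-2$; more precisely the deformations coming from $T_\ell S$ evaluate into the globally generated part of $N_{\ell/X}$, giving $\dim Y \le n-1-m$, where $m$ is the number of negative summands. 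Feeding $e \le \min(\delta-1,\,n-1-\delta)$ and the refined bound on $\dim Y$ into $\dim S = \dim Y + e - 1$, and using $\delta \le d-1$ together with the degree relation $\sum_{a_i<0}a_i = \delta - d$, I would extract $\dim S \le 2n-d-3$ under the hypothesis $n \ge 2d-4$.

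The hard part will be the very last estimate. The crude inputs $\dim Y \le n-2$ and $\delta \le d-1$ only yield the weaker bound $n \ge 2d-2$; squeezing out the final factor to reach $2d-4$ forces a careful accounting of the negative part of $N_{\ell/X}$. The worst case is a single deep negative summand, say one copy of $\OO(\delta-d)$ with $\delta \le d-2$, for which $m=1$ and hence $\dim Y$ can a priori still be as large as $n-2$ while $e$ remains of size $\delta-1$; controlling $\dim Y$ (or, equivalently, $e$) more tightly in exactly this extremal splitting type is where the argument must do its real work, and I expect it to be the main obstacle.
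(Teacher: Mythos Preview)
Your overall architecture matches the paper's proof exactly: separate out the free case $\delta=d$, separate out the case where all summands of $N_{\ell/X}$ are $\ge -1$ (which, as you correctly compute, forces $\delta=d-1$ and gives $h^0(N_{\ell/X})=2n-d-3$), and then for $\delta\le d-2$ apply case~(2) of Theorem~\ref{thm-nonfreelines} to get $\dim S\le\dim Y+\delta-2$. The paper does precisely this.

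The gap is the one you yourself flag at the end: you do not have a sharp enough bound on $\dim Y$. Your normal-bundle argument gives $\dim Y\le n-1-m$ with $m$ the number of negative summands, but in the worst case $m=1$ (a single summand $\OO(\delta-d)$ with $\delta\le d-2$), and then you only get $\dim Y\le n-2$. Combined with $\delta\le d-2$ this yields $\dim S\le n+d-6$, hence only $n\ge 2d-3$. No amount of bookkeeping with the splitting type of $N_{\ell/X}$ alone will push this to $2d-4$, because the extremal splitting $\OO(1)^{n-1-\delta}\oplus\OO^{\delta-2}\oplus\OO(\delta-d)$ genuinely has $m=1$.

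The missing ingredient is an \emph{external} result: the paper invokes Theorem~3.2(b) of \cite{beheshtid8}, which shows that the locus $Y\subset X$ swept out by any family of non-free lines satisfies $\dim Y\le n-3$ (codimension at least two, not just one). That extra unit is exactly what converts $n\ge 2d-3$ into $n\ge 2d-4$: with $\dim Y\le n-3$ and $\delta\le d-2$ one gets $\dim S\le n-3+d-2-2=n+d-7\le 2n-d-3$. This codimension-two statement is not a consequence of the splitting type of a single normal bundle; it requires a separate argument about how non-free lines sit inside $X$, and you should cite it rather than try to extract it from the data you have.
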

\begin{proof}
First observe that the result is well-known for $d < 4$, so without loss of generality, we may assume $d \geq 4$.

Let $S \subset F_1(X)$ be a component. If $S$ contained free lines, it would have the expected dimension, so suppose $S$ consists of non-free lines, sweeping out a subvariety $Y \subset X$. By Theorem 3.2b of \cite{beheshtid8}, we see that $\dim Y \leq n-3$. Let $\ell \in S$ be general and let $h^0(N_{\ell/X}(-p)) = n-1-\delta$. If $\delta=d-1$, then $N_{\ell/X} = \OO(-1) \oplus \OO^{\oplus d-3} \oplus \OO(1)^{\oplus n-d}$. In particular, it has no $H^1$, implying that $S$ has the expected dimension. Thus, we need only consider the case where $\delta \leq d-2$. By Theorem \ref{thm-nonfreelines}, we see that $\dim S \leq \dim Y + \delta -2 \leq n-3 + d-2-2 = n+d-7$. This will be at most $2n-d-3$ if $n+d-7 \leq 2n-d-3$ or $n \geq 2d-4$.
\end{proof}

\subsection{Irreducibility}

We now develop some of the necessary techniques needed to prove that $F_1(X)$, or more generally, $F_k(X)$, is irreducible. We start with a folklore result that we prove for lack of a reference. We say that a scheme $Z$ is \emph{connected in dimension $r$} if $Z \setminus W$ is connected for any scheme $W$ of dimension less than $r$. Being connected in dimension $0$ is the same as being connected. We need the following from \cite{SGA2}, Exp. XIII, (2.1) and (2.3).

\begin{proposition}
\label{cor-connCIs}
Let $Z = V(h_1, \dots, h_c)$ be the vanishing locus of homogeneous polynomials $f_1, \dots, f_c$ on $\PP^n$. Then $Z$ is connected in dimension $n-c-1$.
\end{proposition}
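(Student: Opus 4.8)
The plan is to prove the statement by induction on the number $c$ of equations, treating the two cited results from \cite{SGA2} as the engine. The base case is $c=0$, where $Z = \PP^n$. Since $\PP^n$ is smooth and irreducible of dimension $n$, removing any closed subset of dimension less than $n-1$ (equivalently, of codimension at least two) leaves an irreducible, hence connected, variety; thus $\PP^n$ is connected in dimension $n-1 = n-0-1$, as claimed.

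For the inductive step, suppose the result holds for $c-1$, so that $Z' := V(h_1, \dots, h_{c-1})$ is connected in dimension $n-c$. I would then write $Z = Z' \cap V(h_c)$ and view $V(h_c)$ as an effective ample divisor on $\PP^n$, whose restriction to the closed subscheme $Z'$ is again effective and ample. The key input is the hyperplane-section connectedness principle of \cite{SGA2}, Exp. XIII: if a projective scheme $Y$ is connected in dimension $r$ and $D$ is an effective ample Cartier divisor on $Y$, then $Y \cap D$ is connected in dimension $r-1$. Applying this with $Y = Z'$, $r = n-c$, and $D = V(h_c)|_{Z'}$ yields that $Z$ is connected in dimension $n-c-1$, completing the induction. (When $h_c$ vanishes identically on $Z'$ we have $Z = Z'$, which is connected in dimension $n-c \geq n-c-1$, and being connected in a given dimension trivially implies being connected in every smaller dimension; so this degenerate case causes no trouble, and the claim is vacuous once $n-c-1 < 0$.)

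The main obstacle, and the reason for the citation, is the hyperplane-section principle itself: it is a genuinely nontrivial local-cohomology statement, proved by passing to the formal completion of $Y$ along $D$ and analyzing the local cohomology modules $H^i_D(\OO_Y)$, whose vanishing range controls how components of $Y$ can fail to remain linked after a deletion. I would not reproduce this argument; instead the only thing requiring care on our end is the reduction to its hypotheses, namely checking that $\OO_{\PP^n}(\deg h_c)$ restricts to an ample line bundle on every closed subscheme $Z'$ and that an effective member of this restricted linear system plays the role of the divisor $D$. Both are standard properties of ample line bundles under restriction, so once the SGA2 machinery is granted, the induction is immediate.
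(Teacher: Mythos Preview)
Your proposal is correct and matches the paper's approach: the paper does not supply its own argument but simply records the proposition as a direct consequence of \cite{SGA2}, Exp.~XIII, (2.1) and (2.3). Your induction on $c$ via the ample-divisor connectedness theorem is exactly how one extracts the statement from those references, so there is nothing to add.
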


We can apply Proposition \ref{cor-connCIs} to $F^p(X)$ when it has dimension at least 1, i.e., when $n \geq d+2$.

\begin{corollary}
\label{cor-irrLineFibers}
For $n \geq d+2$, $X \subset \PP^n$ smooth of degree $d$ and $p \in X$ general, $F^p(X)$ is smooth and irreducible.
\end{corollary}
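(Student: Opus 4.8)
The goal is to prove Corollary \ref{cor-irrLineFibers}: for $n \geq d+2$, $X \subset \PP^n$ smooth of degree $d$, and $p \in X$ general, the fiber $F^p(X) = V(f_1, \dots, f_d)$ is smooth and irreducible. Recall from equation (\ref{eq-homPiecesOfF}) that $F^p(X)$ lives in the $\PP^{n-1}$ of lines through $p$ and is cut out by the $d$ homogeneous polynomials $f_1, \dots, f_d$.

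The plan is to establish smoothness and irreducibility as follows. First I would prove smoothness. Since $p$ is a general point of $X$ and $X$ is smooth, the lines through $p$ contained in $X$ are the free lines through a general point, so $F^p(X)$ should be a smooth complete intersection of the expected dimension $n-1-d$. Concretely, smoothness at $\ell$ is equivalent to the $f_i$ meeting transversely at $\ell$, which by the preceding discussion (the computation $h^0(N_{\ell/X}(-1)) = n-1-\delta$ together with Proposition \ref{prop-globGenGeneralPoint}, which guarantees a general line through a general point is free) forces $\delta = d$, i.e.\ the differentials $df_1, \dots, df_d$ are independent at $\ell$. Thus $F^p(X)$ is smooth of pure dimension $n-1-d$, hence a complete intersection.

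Second, I would prove irreducibility. The key input is Proposition \ref{cor-connCIs}: the complete intersection $F^p(X) = V(f_1, \dots, f_d)$ in $\PP^{n-1}$ is connected in dimension $(n-1) - d - 1 = n-d-2$. For $n \geq d+2$ this number is at least $0$, so $F^p(X)$ is connected (indeed connected in a positive dimension when $n > d+2$). A smooth scheme is a disjoint union of its connected components, and each component is irreducible exactly when the scheme is connected and smooth, since smoothness implies each local ring is a domain (normal), so connected plus normal yields irreducible. Combining the smoothness from the first step with the connectedness from Proposition \ref{cor-connCIs} gives that $F^p(X)$ is irreducible.

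The main obstacle is the smoothness step, specifically verifying that for a \emph{general} point $p$ the fiber is smooth everywhere, not just at a general line. The subtlety is that $F^p(X)$ could a priori have isolated singular lines even when $p$ is general. The resolution is to invoke Proposition \ref{prop-globGenGeneralPoint}: every line $\ell$ through a general point $p$ of $X$ is free, because non-free lines sweep out a proper subvariety of $X$ and hence avoid the general point $p$; freeness of $\ell$ is equivalent to $N_{\ell/X}$ being globally generated, which by the normal bundle analysis forces $\delta = d$ and thus transversality of the $f_i$ at every $\ell \in F^p(X)$. This upgrades pointwise smoothness to smoothness of the whole fiber, after which the connectedness result closes the argument.
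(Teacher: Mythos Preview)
Your proof is correct and follows essentially the same approach as the paper: smoothness via Proposition~\ref{prop-globGenGeneralPoint} together with the identification $T_{\ell}F^p(X)=H^0(N_{\ell/X}(-p))$, and connectedness via Proposition~\ref{cor-connCIs} applied to $V(f_1,\dots,f_d)\subset\PP^{n-1}$. You have simply written out in more detail what the paper summarizes in two sentences, including the observation that non-free lines sweep out a proper subvariety and hence miss a general point.
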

\begin{proof}
The fact that $F^p(X)$ is smooth follows from Proposition \ref{prop-globGenGeneralPoint} and the relation of $T_{\ell} F^p(X) = H^0(N_{\ell/X}(-p))$. Thus, it remains to show that $F^p(X)$ is connected. This follows immediately from the description of $F^p(X)$ as $V(f_1, \dots, f_d)$ and Proposition \ref{cor-connCIs}.
\end{proof}

\begin{corollary}
\label{cor-irrLines}
If $n \geq 2d-1$, $n \geq 4$ and $X$ is a smooth degree $d$ hypersurface in $\PP^n$, then $F_1(X)$ is irreducible of the expected dimension.
\end{corollary}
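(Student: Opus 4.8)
The plan is to dispose of the dimension statement first and then establish irreducibility by a fibration argument over $X$. The dimension claim is immediate: since $n \geq 2d-1 \geq 2d-4$, the corollary established above shows that every component of $F_1(X)$ has dimension $2n-d-3$. We may also assume $d \geq 3$, the cases $d \leq 2$ being classical; here the hypothesis $n \geq 4$ is exactly what excludes the smooth quadric surface in $\PP^3$, whose two rulings make $F_1(X)$ reducible. With $d \geq 3$ we have $n \geq 2d-1 \geq d+2$, so that Corollary \ref{cor-irrLineFibers} is available.

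First I would show that every component of $F_1(X)$ has free general member. Suppose $S$ is a component whose general line $\ell$ is non-free; by Proposition \ref{prop-globGenGeneralPoint} the lines of $S$ sweep out a proper subvariety $Y \subsetneq X$, so $\dim Y \leq n-2$. Because $N_{\ell/X}$ is not globally generated it has a summand of negative degree, and writing $N_{\ell/X} = \bigoplus \OO(a_i)$ with $a_i \leq 1$ and $\sum a_i = n-d-1$ one checks that $h^0(N_{\ell/X}(-1)) \geq n-d$, whence $\delta = n-1-h^0(N_{\ell/X}(-1)) \leq d-1$. Case (2) of Theorem \ref{thm-nonfreelines} then gives $\dim S \leq \dim Y + \delta - 2 \leq (n-2)+(d-1)-2 = n+d-5$. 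Since $n \geq 2d-1$ we have $n+d-5 < 2n-d-3$, contradicting that $S$ has the expected dimension. Thus the general line of every component is free.

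Next I would verify that every component $S$ dominates $X$. For a free line $\ell$ one has $N_{\ell/X}(-1) \cong \OO^{n-d-1} \oplus \OO(-1)^{d-1}$, so $h^0(N_{\ell/X}(-p)) = n-d-1$ for $p \in \ell$; hence the universal line over $S$, projected to $X$, has general fibre of dimension $n-d-1$, and its image has dimension $(\dim S + 1) - (n-d-1) = (2n-d-2)-(n-d-1) = n-1 = \dim X$. So each component dominates $X$. Finally, let $\Sigma \subset X \times F_1(X)$ be the incidence variety of pairs $(p,\ell)$ with $p \in \ell$. The projection $\pi_2 : \Sigma \to F_1(X)$ is a $\PP^1$-bundle, so components of $\Sigma$ correspond to those of $F_1(X)$, and it suffices to prove $\Sigma$ irreducible. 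Each component of $\Sigma$ dominates $X$ under $\pi_1 : \Sigma \to X$, whose fibre over a general point is $F^p(X)$, irreducible by Corollary \ref{cor-irrLineFibers}. Consequently the generic fibre of $\pi_1$ is irreducible, so $\Sigma$ has a unique component dominating $X$; as every component does, there is exactly one, and $\Sigma$, hence $F_1(X)$, is irreducible.

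The main obstacle is the interplay of the two dimension computations that force every component to dominate $X$: the non-free estimate via Theorem \ref{thm-nonfreelines}, in which the bound $n \geq 2d-1$ is precisely what is needed, rules out components of non-free lines, while the normal-bundle computation for free lines shows the surviving components are dominant. Once domination holds for all components, reducing irreducibility to irreducibility of the general fibre $F^p(X)$ is the standard fact that a dominant morphism to an irreducible base with irreducible generic fibre has a unique dominating component; the only care needed is passing from irreducibility of the general closed fibre (Corollary \ref{cor-irrLineFibers}) to that of the generic fibre, which is automatic in characteristic zero.
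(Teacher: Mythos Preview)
Your proof is correct and follows essentially the same route as the paper: pass to the incidence correspondence, show every component dominates $X$, and conclude via the irreducibility of $F^p(X)$ from Corollary~\ref{cor-irrLineFibers}. The only organizational difference is that the paper invokes the fiber bound $\dim(S\cap F^p(X))\le\delta-1\le d-2$ from Theorem~\ref{thm-nonfreelines} to rule out components of $\uU$ with fibers of dimension larger than $n-d-1$ directly, whereas you use the total-dimension bound $\dim S\le\dim Y+\delta-2$ (with the cruder $\dim Y\le n-2$) to first eliminate components of non-free lines and then argue separately that a component containing a free line dominates; both arguments hinge on $n\ge 2d-1$ in exactly the same way.
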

\begin{proof}
Consider the space $\uU$ of pairs $(p,\ell)$ of points $p$ lying in lines $\ell$ that are contained in $X$. Since $\uU$ is a $\PP^1$ bundle over $F_1(X)$, the irreducible components of $\uU$ will be in bijection with the irreducible components of $F_1(X)$. Let $\ev: \uU \to X$ be the natural evaluation map. Note that $2d-1 \geq d+2$ given our conditions on $n$ and $d$, so by Corollary \ref{cor-irrLineFibers}, $F^p(X)$ is smooth and irreducible for a general $p \in X$. Thus, by Proposition \ref{prop-globGenGeneralPoint} it will be enough to show that any component of $\uU$ dominates $X$. 

Every component of $\uU$ has dimension at least $2n-d-2$ since every component of $F_1(X)$ has dimension at least $2n-d-3$. Let $\uU_S \subset \uU$ be the locus where the fibers of $\ev$ have dimension larger than $n-d-1$, the relative dimension of the map. We wish to show that $\uU_S$ does not contain a component of $\uU$. To get a contradiction, suppose that it does. Let $S$ be the image of $\uU_S$ in $F_1(X)$. Let $\ell \in S$ be general, let $p \in \ell$ be general and let $\delta = n-1-h^0(N_{\ell/X}(-1))$. By assumption, $\delta \leq d-1$. By Theorem \ref{thm-nonfreelines}, we see that $F^p(X) \cap S$ has dimension at most $\delta -1 \leq d-2$. If $n \geq 2d-1$, then $d-2 \leq n-d-1$, which contradicts our choice of $\uU_S$. The result follows.
\end{proof}

\section{Expected dimension of $k$-planes}
\label{sec-kplanes}
\subsection{Examples}
We start by providing some examples of hypersurfaces with larger-than-expected families of $k$-planes. These help show that our results are within a factor of $k+1$ of the optimal bound.

The first example is hypersurfaces containing a large linear space. It is well-known (see \cite{3264}, Corollary 6.26) that smooth hypersurfaces cannot contain a linear space of more than half of their dimension, but that there are smooth hypersurfaces containing linear spaces of up to half of their dimension. Let $X \subset \PP^{2m+1}$ be a smooth, degree $d$ hypersurface containing a linear space $\Lambda$ of dimension $m$. Then $X$ will certainly contain all the $k$-planes in $\Lambda$, i.e., 
\[ \dim F_k(X) \geq (k+1)(m-k) .\]
This will be larger than the expected dimension $(k+1)(2m+1-k) - \binom{d+k}{k}$ when
\[ (k+1)(m-k) > (k+1)(2m+1-k) - \binom{d+k}{k} \]
or
\[ m+1 < \frac{1}{k+1} \binom{d+k}{k} . \]

Since $n = 2m+1$ implies $\frac{n+1}{2} = m+1$, we obtain the following corollary.

\begin{proposition}
\label{prop-counterexample}
If $n < \frac{2}{k+1} \binom{d+k}{k}-1$, then there exists a smooth hypersurface $X$ of degree $d$ in $\PP^n$ with $\dim F_k(X) > (k+1)(m-k) - \binom{d+k}{k}$. Thus, in this range the dimension of $F_k(X)$ must depend on $X$.
\end{proposition}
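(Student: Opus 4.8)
The plan is to realize the bound directly from the geometry of a smooth hypersurface containing a half-dimensional linear space, which is exactly the setup already in place: write $n = 2m+1$ and let $\Lambda \subset \PP^n$ be a fixed linear subspace of dimension $m$. First I would produce a smooth degree $d$ hypersurface $X \subset \PP^n$ with $\Lambda \subset X$. The cleanest construction is to take the linear system of degree $d$ forms vanishing on $\Lambda$: choosing coordinates so that $\Lambda = V(x_{m+1}, \dots, x_{2m+1})$, every such form can be written $f = \sum_{i=m+1}^{2m+1} x_i g_i$ with $\deg g_i = d-1$, and I would then show that a general choice of the $g_i$ yields a smooth $X = V(f)$.

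The existence of this smooth $X$ is the one genuine point to check, and it is the step I expect to be the main obstacle, since $\dim \Lambda = m = \lfloor n/2 \rfloor$ is precisely the borderline case in which a smooth hypersurface can still contain $\Lambda$. Away from the base locus $\Lambda$, smoothness of the general member is immediate from Bertini. Along $\Lambda$ one computes that on $\Lambda$ the partials $\partial f / \partial x_j$ vanish for $j \leq m$, while $\partial f / \partial x_j = g_j$ for $j \geq m+1$; hence $X$ is smooth along $\Lambda$ exactly when $g_{m+1}, \dots, g_{2m+1}$ have no common zero on $\Lambda \cong \PP^m$. As these are $m+1$ general forms on an $m$-dimensional projective space, their common zero locus has negative expected dimension and is therefore empty, so the general member is smooth. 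This is the classical fact recorded as Corollary 6.26 of \cite{3264}, and it is sharp for exactly the numerical reason just seen, namely that there are precisely $\dim \Lambda + 1$ equations $g_i$.

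Granting such an $X$, the remaining steps are elementary bookkeeping. Every $k$-plane contained in $\Lambda$ is contained in $X$, so $F_k(X)$ contains the Grassmannian $\gG(k,m)$ of $k$-planes in $\Lambda$, giving $\dim F_k(X) \geq (k+1)(m-k)$. Comparing with the expected dimension $(k+1)(n-k) - \binom{d+k}{k}$ and substituting $n = 2m+1$, the inequality $(k+1)(m-k) > (k+1)(2m+1-k) - \binom{d+k}{k}$ simplifies, after cancellation, to $m+1 < \frac{1}{k+1}\binom{d+k}{k}$. Finally, using $m+1 = \frac{n+1}{2}$ I would rewrite this condition as $n < \frac{2}{k+1}\binom{d+k}{k} - 1$, matching the hypothesis of Proposition \ref{prop-counterexample}. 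Since a general smooth hypersurface has $F_k$ of the expected dimension, the existence of this $X$ shows that in the stated range $\dim F_k(X)$ genuinely depends on $X$, which is the asserted conclusion.
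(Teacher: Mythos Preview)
Your proposal is correct and follows essentially the same route as the paper: both take a smooth hypersurface $X \subset \PP^{2m+1}$ containing an $m$-plane $\Lambda$, bound $\dim F_k(X)$ from below by $\dim \gG(k,m) = (k+1)(m-k)$, and simplify the resulting inequality to the stated condition on $n$. The only difference is that the paper cites \cite{3264}, Corollary 6.26 for the existence of such $X$, whereas you supply the standard explicit construction (checking smoothness along $\Lambda$ via the $m+1$ partials $g_{m+1},\dots,g_{2m+1}$) before invoking that same reference.
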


The next family of examples is hypersurfaces with a conical hyperplane section. Consider a smooth hypersurface $X = V(f)$ satisfying $f = g + x_0 h$, where $g$ and $h$ are polynomials, and $g$ depends only on $x_2, \dots, x_n$. Then the intersection of $X$ and $V(x_0)$ will be a cone, with vertex $[0,1,0, \dots, 0]$, so the space of lines in $X \cap V(x_0)$ will have dimension at least $n-3$. If $d > n$, this will be larger than $2n-d-3$.

\begin{proposition}
\label{prop-dJBoptimal}
For every $d > n$, there exists a smooth hypersurface $X$ of degree $d$ in $\PP^n$ with $\dim F_1(X) > 2n-d-3$. Thus, Conjecture \ref{conj-deJongDebarre} is optimal.
\end{proposition}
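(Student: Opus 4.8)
The plan is to make rigorous the construction sketched in the paragraph preceding the statement, the only real work being to produce a hypersurface of this shape that is genuinely \emph{smooth}. I would take $f = g + x_0 h$, where $g = g(x_2, \dots, x_n)$ is a form of degree $d$ in the last $n-1$ variables and $h$ is a form of degree $d-1$, and set $X = V(f)$. Then $X \cap V(x_0) = V(x_0, g)$, and since $g$ does not involve $x_1$, this is a cone with vertex $v = [0:1:0:\cdots:0]$ over the base $B = V(g) \subset \PP^{n-2}$, where $\PP^{n-2} = V(x_0, x_1)$ carries coordinates $x_2, \dots, x_n$. Indeed, for $q = [0:0:q_2:\cdots:q_n] \in B$ the line $\overline{vq}$ consists of the points $[0:s:tq_2:\cdots:tq_n]$, on which $g = t^d g(q_2, \dots, q_n) = 0$, so $\overline{vq} \subset V(x_0, g) \subset X$.

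Next I would read off the dimension estimate. Since $g$ has positive degree, $B \subset \PP^{n-2}$ has dimension $n-3$, and the assignment $q \mapsto \overline{vq}$ is injective (the line meets $V(x_0, x_1)$ in the single point $q$), so it exhibits an $(n-3)$-dimensional family of lines on $X$. Hence $\dim F_1(X) \geq n-3$. Comparing with the expected value, $n-3 > 2n-d-3$ precisely when $d > n$, which is exactly the hypothesis; this gives $\dim F_1(X) > 2n-d-3$ and shows the bound $d \leq n$ in Conjecture \ref{conj-deJongDebarre} cannot be relaxed. (One needs $n \geq 3$ for the family to be nonempty, which is harmless.)

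The main obstacle is producing a smooth such $X$, since the whole point of the example is to retain the cone while avoiding singularities. I would verify this via the Jacobian criterion. Off the hyperplane $V(x_0)$ one can invoke Bertini (in characteristic $0$) for the linear system obtained by varying $h$, whose base locus is exactly the cone $V(x_0, g)$, so the general member is smooth away from $V(x_0, g)$. Along $V(x_0, g)$ the computation simplifies: at $p$ with $x_0(p) = 0$ one gets $\partial f/\partial x_1 = 0$ automatically, $\partial f/\partial x_i = \partial g/\partial x_i$ for $i \geq 2$, and $\partial f/\partial x_0 = h$, so $p$ is singular only if $h(p) = 0$ and all $\partial g/\partial x_i(p) = 0$. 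Choosing $g$ so that $B = V(g) \subset \PP^{n-2}$ is smooth forces the vanishing of all $\partial g/\partial x_i$ to occur only at the vertex $v$, and then choosing $h$ with $h(v) \neq 0$ (for instance any $h$ containing the monomial $x_1^{d-1}$) rules out singularity at $v$ as well.

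Concretely, I expect the cleanest route is to skip Bertini entirely and just exhibit one explicit polynomial, e.g.
\[ f = x_2^d + \cdots + x_n^d + x_0 x_1^{d-1} + x_0^d, \]
and check directly that $\partial f/\partial x_i = d x_i^{d-1}$ for $i \geq 2$, $\partial f/\partial x_1 = (d-1)x_0 x_1^{d-2}$, and $\partial f/\partial x_0 = x_1^{d-1} + d x_0^{d-1}$ have no common zero in $\PP^n$: the first batch forces $x_2 = \cdots = x_n = 0$, the second forces $x_0 = 0$ or $x_1 = 0$, and either case fed into the last forces all coordinates to vanish. This $f$ is of the required form with $g = x_2^d + \cdots + x_n^d$, so the dimension count above applies verbatim, completing the proof.
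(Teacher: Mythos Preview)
Your proposal is correct and follows exactly the construction the paper sketches in the paragraph preceding the proposition; the paper itself gives no further argument beyond that sketch. You have simply filled in the one detail the paper leaves implicit, namely the verification that such an $X$ can be chosen smooth, and both your Bertini argument and your explicit Fermat-type example do this cleanly.
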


Furthermore, a hypersurface $X$ with an $n-3$-dimensional family of lines through a given point $p$ will have a larger than expected family of genus zero stable maps: 
if $e \geq 3$, $C$ is a curve obtained by attaching $e$ irreducible rational curves $C_1, \dots , C_e$ to an irreducible rational curve $C_0$ at distinct points, and $f: C\to X$ is the map which contracts $C_0$ to $p$ and sends each $C_i$, $1 \leq i \leq e$, isomorphically onto a line through $p$, then $(C,f)$ is a stable map 
and the family of such maps is of dimension $e(n-3)+e-3$ which is larger than the expected dimension $e(n-d+1)+n-4$ when
\[ d > 3+ \frac{n-1}{e}. \]
This shows the bound $\frac{e+n}{e+1}= 1 + \frac{n-1}{e+1}$ from Theorem \ref{higherdeg} is almost sharp.

\subsection{Local Equations for the Fano scheme}
We wish to explicitly write down equations for the space of $k$-planes in $X$ containing a single $k-1$ plane $\Lambda$. 

We start by reviewing some notation from $\PP^n$. Let $\Lambda$ be a $k-1$-plane in $\PP^n$. The space of $k$-planes in $\PP^n$ containing $\Lambda$ can naturally be identified with $\PP^{n-k}$ by writing any $k$-plane containing $\Lambda$ parametrically as $\Phi = [x_0, \dots, x_{k-1}, a_k t, a_{k+1} t, \dots, a_n t]$. Here, $[x_0, \dots, x_{k-1}, t]$ are coordinates on $\PP^k$ and $[a_k, \dots, a_n]$ are the coordinates on the $\PP^{n-k}$ of $k$-planes containing $\Lambda$.

Now suppose $\Lambda$ is contained in a degree $d$ hypersurface $X$. We consider the equations on $\PP^{n-k}$ cutting out the space of $k$-planes containing $\Lambda$ that lie in $X$. Let $T$ be the set of all multisets $I$ on the numbers $0$ through $k-1$ such that $|I| \leq d-1$. Observe that $|T| = \binom{d+k-1}{k}$. Given a multiset $I$ in $T$, we have a unique monomial $x^I$. For instance $x^{\{1,1,2,3 \}} = x_1^2 x_2 x_3$. Since $\Lambda \subset X$, we know that $f$ will be a sum of monomials each of which is divisible by at least one of $x_k, \dots, x_n$, so we can write 
\begin{equation}
\label{eq-cIs}
f = \sum_{I \in T} c_I x^I,
\end{equation} 
where each $c_I$ is a homogeneous polynomial in $x_k, \dots, x_n$ of degree $d-|I| \geq 1$. Let $F^{\Lambda}(X)$ be the space of $k$-planes lying in $X$ that contain $\Lambda$.

\begin{proposition}
The $c_I$ are the equations that cut out $F^{\Lambda}(X)$ in the $\PP^{n-k}$ of $k$-planes containing $\Lambda$.
\end{proposition}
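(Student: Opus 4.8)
The plan is to substitute the parametric description of a $k$-plane $\Phi$ containing $\Lambda$ into the equation $f$ and determine precisely when $\Phi \subset X$. Recall that a $k$-plane containing $\Lambda$ is given parametrically by $\Phi = [x_0, \dots, x_{k-1}, a_k t, \dots, a_n t]$, where $[x_0, \dots, x_{k-1}, t]$ are coordinates on $\PP^k$ and $[a_k, \dots, a_n]$ are the coordinates on the $\PP^{n-k}$ of $k$-planes containing $\Lambda$. The point is that $\Phi \subset X$ if and only if the polynomial $f$ restricted to $\Phi$, viewed as a polynomial in the $\PP^k$-coordinates $x_0, \dots, x_{k-1}, t$, vanishes identically.

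First I would perform this substitution in the expansion $f = \sum_{I \in T} c_I x^I$ from (\ref{eq-cIs}). For a fixed multiset $I \in T$ with $|I| = m \leq d-1$, the monomial $x^I$ is a monomial of degree $m$ in $x_0, \dots, x_{k-1}$ and is unaffected by the substitution, while $c_I(x_k, \dots, x_n)$ is homogeneous of degree $d - m$ in $x_k, \dots, x_n$; under the substitution $x_j \mapsto a_j t$ for $j \geq k$, it becomes $c_I(a_k, \dots, a_n)\, t^{d-m}$. Thus the restriction of $f$ to $\Phi$ is
\[
f|_\Phi = \sum_{I \in T} c_I(a_k, \dots, a_n)\, t^{d - |I|}\, x^I,
\]
a polynomial in $x_0, \dots, x_{k-1}, t$ whose coefficients are the scalars $c_I(a_k, \dots, a_n)$. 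The key observation is that the monomials $t^{d-|I|} x^I$ appearing here are distinct for distinct $I \in T$: indeed $I$ is recovered from $x^I$ as the multiset of exponents in $x_0, \dots, x_{k-1}$, and $|I|$ determines the power of $t$, so the map $I \mapsto t^{d-|I|}x^I$ is injective on $T$. Hence these monomials are linearly independent in the coordinate ring of $\PP^k$.

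Consequently, $f|_\Phi$ vanishes identically if and only if every coefficient $c_I(a_k, \dots, a_n)$ is zero, which is exactly the condition that the point $[a_k, \dots, a_n] \in \PP^{n-k}$ lie in the common vanishing locus $V(\{c_I\}_{I \in T})$. This shows set-theoretically that $F^\Lambda(X) = V(\{c_I : I \in T\})$ inside $\PP^{n-k}$, and since the $c_I$ are precisely the coefficients in a monomial expansion with respect to independent monomials, the scheme structure agrees as well, so the $c_I$ cut out $F^\Lambda(X)$ as claimed.

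I do not anticipate a serious obstacle here; the statement is essentially a bookkeeping calculation generalizing the line case $k=1$ treated in (\ref{eq-homPiecesOfF}). The one point requiring a little care is verifying the injectivity of $I \mapsto t^{d-|I|} x^I$ so that the coefficients are genuinely independent; once that is in place the equivalence is immediate. A secondary subtlety, if one wants the full scheme-theoretic statement rather than just the set-theoretic one, is to confirm that expanding $f|_\Phi$ in the basis of monomials in $x_0, \dots, x_{k-1}, t$ yields exactly the $c_I$ as the defining equations, with the correct (reduced or natural) scheme structure matching that of the Fano scheme of $k$-planes through $\Lambda$; this follows from the functorial description of $F^\Lambda(X)$ via the vanishing of the restricted equation.
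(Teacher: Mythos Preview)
Your proof is correct and follows exactly the same approach as the paper's: substitute the parametric form of $\Phi$ into $f$, obtain $\sum_{I\in T} c_I(a_k,\dots,a_n)\,x^I t^{d-|I|}$, and conclude that $f|_\Phi\equiv 0$ iff every $c_I(a_k,\dots,a_n)=0$. You supply a bit more detail than the paper (explicit injectivity of $I\mapsto t^{d-|I|}x^I$ and a remark on scheme structure), but the argument is the same.
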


\begin{proof}
We can write any $k$-plane containing $\Lambda$ as $\Phi = [x_0, \dots, x_{k-1}, a_k t, a_{k+1} t, \dots, a_n t]$, where $[x_0, \dots, x_{k-1}, t]$ are coordinates on $\PP^k$ and $[a_k, \dots, a_n]$ are the coordinates on the $\PP^{n-k}$ of $k$-planes containing $\Lambda$. Plugging these into $f$, we get
\[ \sum_{I \in T} c_I(a_k, \dots, a_n) x^I t^{d-|I|} . \]
Thus, $f$ will vanish on $\Phi$ precisely when all of the $c_I$ vanish.
\end{proof}

For future reference it will be useful to talk about the tangent space to $F^{\Lambda}(X)$ at a point $\Phi$. Given a polynomial $c_I$ in $x_k, \dots, x_n$, a point $\Phi \in F^{\Lambda}(X)$, and a homogeneous coordinate $x_k$ on $\PP^{n-k}$, we can expand $c_I$ as a power series around $\Phi$. let $L(c_I)$ be the linear part of $c_I$ near $\Phi$, which will be independent of the chosen homogeneous coordinate $x_k$. Then the tangent space to  $F^{\Lambda}(X)$ at $\Phi$ will be cut out by $\{ L(c_I) \}_{I \in T}$. For a general hypersurface containing $\Lambda$, the $c_I$ will all impose independent conditions, so $\dim F^{\Lambda}(X)$ will be $n-k-\binom{d+k-1}{k}$ in this case. We refer to $n-k-\binom{d+k-1}{k}$ as the \emph{expected dimension} of $F^{\Lambda}(X)$.

\begin{lemma}
\label{lem-cEmptySmooth}
Let $X$ be a hypersurface with singular locus of dimension $s$ containing a $(k-1)$-plane $\Lambda$. For a general choice of coordinates $x_0, \dots, x_{k-1}$ on $\PP^n$, the singular locus of $V(c_{\emptyset}) \subset \PP^{n-k}$ will have dimension at most $s' = \max \{s-k,-1\}$. In particular, if $X$ is smooth then $V(c_{\emptyset})$ will be as well.
\end{lemma}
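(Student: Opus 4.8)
The plan is to recognize $V(c_{\emptyset})$ as a linear section of $X$ and then apply Bertini. By definition $c_{\emptyset}$ is the part of $f$ involving none of $x_0, \dots, x_{k-1}$, so $c_{\emptyset} = f(0, \dots, 0, x_k, \dots, x_n)$ is exactly the restriction of $f$ to the linear space $\Lambda' := V(x_0, \dots, x_{k-1})$, which is complementary to $\Lambda = V(x_k, \dots, x_n)$. Under the identification of the $\PP^{n-k}$ of $k$-planes containing $\Lambda$ with $\Lambda'$ via $[a_k : \cdots : a_n] \mapsto [0:\cdots:0:a_k:\cdots:a_n]$, the hypersurface $V(c_{\emptyset}) \subset \PP^{n-k}$ becomes the hypersurface $X \cap \Lambda' \subset \Lambda'$, and this projective isomorphism carries singular locus to singular locus. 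So it suffices to bound $\dim \mathrm{Sing}(X \cap \Lambda')$.

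Next I would check that as $x_0, \dots, x_{k-1}$ vary generally (completing $x_k, \dots, x_n$ to a coordinate system), the complementary plane $\Lambda'$ ranges over a dense open subset of $\gG(n-k,n)$, namely the $(n-k)$-planes disjoint from $\Lambda$. A parameter count confirms this: the $k$ forms $x_0, \dots, x_{k-1}$, taken modulo the $\mathrm{GL}_k$ that acts among them (and does not change $V(x_0, \dots, x_{k-1})$), give $k(n+1-k)$ parameters, matching $\dim \gG(n-k,n)$. Hence ``$x_0, \dots, x_{k-1}$ general'' translates into ``$\Lambda'$ general'', and it is enough to treat a general $(n-k)$-plane $\Lambda'$.

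I would then run iterated Bertini in characteristic $0$. Write $\Lambda' = H_1 \cap \cdots \cap H_k$. Applying Bertini to the smooth locus $X_{\mathrm{sm}}$ shows that for general $H_1$ the section $X_{\mathrm{sm}} \cap H_1$ is smooth; since a point of $X \cap H_1$ lying in $X_{\mathrm{sm}}$ is a singular point of $X \cap H_1$ precisely when $X_{\mathrm{sm}} \cap H_1$ is singular there, this gives $\mathrm{Sing}(X \cap H_1) = \mathrm{Sing}(X) \cap H_1$. The smooth locus of $X \cap H_1$ is then $X_{\mathrm{sm}} \cap H_1$, again smooth, so the same step applies inductively. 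After $k$ iterations we obtain $\mathrm{Sing}(X \cap \Lambda') = \mathrm{Sing}(X) \cap \Lambda'$ for general $\Lambda'$. Finally, a general codimension-$k$ linear space meets the dimension-$s$ variety $\mathrm{Sing}(X)$ in dimension $s-k$ when $s \geq k$ and misses it otherwise, so $\dim \mathrm{Sing}(X \cap \Lambda') \leq \max\{s-k,-1\} = s'$. When $X$ is smooth, $\mathrm{Sing}(X)=\emptyset$ and this yields smoothness of $V(c_{\emptyset})$.

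The main obstacle is the Bertini step: ruling out the ``new'' singularities of the section arising from tangency of $\Lambda'$ to $X_{\mathrm{sm}}$, as opposed to those inherited from $\mathrm{Sing}(X)$. This is exactly where the characteristic $0$ hypothesis is essential, since Bertini for the smooth locus can fail in positive characteristic. A minor technical point to pin down is that the successive genericity conditions on $H_1, \dots, H_k$ assemble into a single dense open condition on $\Lambda'$, equivalently on the coordinate choice; this follows from constructibility of the good locus together with the dominance of the coordinates-to-$\Lambda'$ map established in the second paragraph.
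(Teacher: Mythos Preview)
Your proof is correct and follows essentially the same approach as the paper: both identify $V(c_{\emptyset})$ with the linear section $X \cap V(x_0,\dots,x_{k-1})$ and then invoke the fact that a general codimension-$k$ linear section of $X$ has singular locus of dimension at most $\max\{s-k,-1\}$. The paper states this last step in a single sentence, whereas you spell out the iterated Bertini argument and the translation between genericity of the coordinates and genericity of $\Lambda'$; these added details are all correct.
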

\begin{proof}
From the equation of $f$, we see that $V(c_{\emptyset}) = X \cap V(x_0, \dots, x_{k-1})$. Since $x_0, \dots, x_{k-1}$ are general in $\PP^n$, $X \cap V(x_0, \dots, x_{k-1})$ will have singular locus of dimension $\max \{s-k,-1 \}$. 
\end{proof}

\subsection{Dimension}

Now we prove that high-dimensional smooth hypersurfaces contain only expected dimensional families of $k$-planes. We proceed with the analog of Lemma \ref{lem-linesDependence} in the case of $k$-planes. In order to state it, we need the following definition. A subset $T' \subset T$ is a \emph{downward set} if whenever $I \in T'$ with $|I| < d-1$, $I \cup \{j \} \in T'$ for all $j \in \{0, \dots, k-1 \}$. Note that the only downward set containing $\emptyset$ is $T$ itself. 

\begin{lemma}
\label{lem-downwardSet}
Let $X = V(f)$ be a hypersurface, and $\Phi \subset X$ a $k$-plane. Then there is a downward set $T' \subset T$ such that for a general $(k-1)$-plane $\Lambda \subset \Phi$, $\{L(c_I) | \: I \in T' \}$ form a basis for $\spa \{ L(c_I) | \: I \in T \}$. In particular, one of the following must hold:
\begin{enumerate}
\item $V(c_{\emptyset})$ is tangent to $V(\{ c_I | \: I \textrm{ is not empty} \})$ at the point corresponding to $\Phi$.
\item The elements of $\{ c_I \}_{I \in T}$ all meet transversely at $\Phi$.
\end{enumerate}

\end{lemma}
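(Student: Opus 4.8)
The plan is to mirror the proof of Lemma~\ref{lem-linesDependence}, replacing the one-dimensional family of points $p \in \ell$ by the family of $(k-1)$-planes $\Lambda \subset \Phi$ and tracking the linear parts $L(c_I)$ as $\Lambda$ varies. First I would record that the final dichotomy is a formal consequence of the basis statement: if the downward set $T'$ furnished by the lemma equals all of $T$, then the $L(c_I)$ are linearly independent and the $c_I$ meet transversely at $\Phi$, giving (2); otherwise $T' \neq T$, so $\emptyset \notin T'$ (the only downward set containing $\emptyset$ is $T$ itself), whence $L(c_\emptyset)$ lies in $\spa\{L(c_I) : I \in T'\} \subseteq \spa\{L(c_I) : I \neq \emptyset\}$ and $V(c_\emptyset)$ is tangent to $V(\{c_I : I \neq \emptyset\})$ at $\Phi$, giving (1). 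So everything reduces to producing the downward basis.

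The computational heart is the first-order behaviour of the $L(c_I)$ under moving $\Lambda$ inside $\Phi$. Choosing coordinates so that $\Phi = V(x_{k+1}, \dots, x_n)$, $\Lambda = V(x_k, \dots, x_n)$, and $\Phi$ sits at $[a_k : \cdots : a_n] = [1:0:\cdots:0]$, a short computation identifies $L(c_I)$ with the linear form $\sum_{m=k+1}^n \big([x^I x_k^{\,d-1-|I|}]\,(\partial_{x_m} f)|_\Phi\big)\, a_m$ built from the normal derivatives of $f$ along $\Phi$. Deforming $\Lambda$ to first order in the $j$-th direction amounts to the substitution $x_k \mapsto x_k + \epsilon x_j$, and expanding shows that this sends
\[ L(c_I) \;\longmapsto\; L(c_I) + \epsilon\,(d-|I|)\,L(c_{I \setminus \{j\}}) \qquad (j \in I), \]
while fixing the others to first order. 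This is exactly the $k$-plane analogue of the formula $L(f_i) \mapsto L(f_i) + \epsilon\, i\, L(f_{i+1})$ from Lemma~\ref{lem-linesDependence}, and I would verify that it specializes to it when $k=1$. The crucial structural feature is that the correction term is indexed by the strictly smaller multiset $I \setminus \{j\}$, so the deformation is triangular with respect to the grading of $T$ by $|I|$.

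With this in hand I would build $T'$ one graded level at a time, from the largest multisets ($|I| = d-1$, where the $c_I$ are already linear) down to $\emptyset$. For general $\Lambda$ the dimensions of the nested spans $W_{\geq s} = \spa\{L(c_I) : |I| \geq s\}$ are simultaneously maximal by upper-semicontinuity of rank, and I would choose, at each level $s$, a set of size-$s$ multisets whose classes form a basis of the graded quotient $W_{\geq s}/W_{\geq s+1}$, subject to the downward-closure constraint that every chosen $I$ have all of its children $I \cup \{j\}$ already selected at level $s+1$. When this is possible at every level the union is a downward basis for $\spa\{L(c_I)\}$, and the dichotomy follows as above.

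The main obstacle is precisely the interaction between downward-closure and spanning: a multiset $I$ whose class is needed to span the quotient at level $s$ may fail to be eligible because some child $I \cup \{j\}$ was not selected at level $s+1$. This is where generality of $\Lambda$ and the deformation formula do the real work. If such a defect occurred for general $\Lambda$, I would use the triangular deformation in the direction $j$—which feeds a nonzero multiple of $L(c_I)$ into $L(c_{I \cup \{j\}})$—to relocate a pivot to a more downward-compatible multiset, strictly increasing some $\dim W_{\geq s}$ for the deformed plane and contradicting the maximality guaranteed by generality, exactly as in the final paragraph of Lemma~\ref{lem-linesDependence}. Making this relocation respect downward-closure at all levels simultaneously, rather than at the single index that suffices when $k=1$, is the one genuinely new bookkeeping difficulty, and is the reason the statement is phrased in terms of downward sets rather than an initial segment.
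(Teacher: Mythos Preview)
Your proposal is correct and follows essentially the same route as the paper: both compute the first-order variation $L(c_I)\mapsto L(c_I)+\epsilon\,c\,L(c_{I\setminus\{j\}})$ under moving $\Lambda$ inside $\Phi$ and then use semicontinuity/generality of $\Lambda$ to force the index set of a basis to be downward, the deformation supplying the contradiction when it is not. The paper organizes the endgame slightly differently---rather than building level by level through your filtration $W_{\ge s}$, it simply picks an arbitrary basis $T_1$, takes $J\in T_1$ of maximal size with a missing child $J\cup\{m\}$, and deforms in direction $m$---but the content is identical; incidentally your coefficient $d-|I|$ is the correct one (the paper's $d-1-|I|$ is a slip, as the $k=1$ specialization confirms).
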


\begin{proof}
Fix $\Phi = V(x_{k+1}, \dots, x_n)$. Choose $\Lambda \subset \Phi$ general and choose coordinates so that $\Lambda = V(x_k, \dots, x_n)$. As we deform $\Lambda$ to $V(x_k-\epsilon \sum_{i=0}^{k-1} a_i x_i, x_{k+1}, \dots, x_n)$, we can preserve the choice of coordinates by taking $x_k \mapsto x_k + \epsilon \sum_{i=0}^{k-1} a_i x_i$. Let $L(c_I)$ be the linear part of the expansion of $c_I$ around $\Phi$. In other words, $L(c_I)$ will be the coefficient of $x^I x_k^{d-1-|I| }$ in the expression for $f$.

We claim that there is a downward subset $T' \subset T$ such that $\{ L(c_I) | \: I \in T' \}$  form a basis for $\spa \{ L(c_I) | \: I \in T \}$. If $L(c_I) = 0$ for all $I$, then we can take $T' = \emptyset$, and note as an aside that $f$ will vanish to order at least $2$ along $\Phi$ in this case.

Let $T_1 \subset T$ be such that $\{ L(c_I)| \: I \in T_1 \}$ is a basis for $\spa \{ c_I | \: I \in T \}$. Let $T_2 \subset T_1$ be a largest downward subset. If $T_1$ is not a downward set, then there must be some $J \in T_1$ and some $m \in \{0, \dots, k-1 \}$ such that $L(c_J)$ is independent of $\{ L(c_I) | \: I \in T_2 \}$ but $L(c_{J \cup \{m\}})$ is dependent on $\{ L(c_I) | \: I \in T_1 \}$. We can choose $J$ so that $|J|$ is as large as possible. Now deform $\Lambda$ using the change of coordinates $x_k \to x_k + \epsilon x_m$. Under this change of coordinates, we have $L(c_I) \mapsto L(c_I) + \epsilon (d-1-|I|) L(c_{I \setminus \{m\}})$ if $m \in I$ and $L(c_I) \mapsto L(c_I)$ otherwise. Under this deformation, we see that $L(c_{J \cup \{m \}})$ will become independent of $\{ L(c_I) | \: I \in T_2 \}$, contradicting generality of $\Lambda$, since $|J|$ was as large as possible. Thus, $T_1$ must be a downward set.
\end{proof}

\begin{remark}
\label{rem-PhiLambdaDim}
Note that $\dim T_{\Phi} F^{\Lambda}(X)$ does not depend on the choice of $\Lambda \subset \Phi$, since $H^0(N_{\Phi /X}(-\Lambda)) = H^0(N_{\Phi /X}(-1))$.
\end{remark}

\begin{theorem}
\label{thm-kPlanesContLambda}
Let $\Phi \subset X$ be a $k$-plane contained in a hypersurface $X$ with singular locus of dimension $s$. Then for a general $(k-1)$-plane $\Lambda$ in $\Phi$, one of the following two conditions hold:
\begin{enumerate}
\item \label{case-smooth} $\Phi$ is a smooth point of $F^{\Lambda}(X)$, and the component of $F^{\Lambda}(X)$ containing $\Phi$ has the expected dimension.
\item \label{case-singular} For some $\delta < \binom{d+k-1}{k}$, $\Phi$ is an element of the set $S \subset F^{\Lambda}(X)$ consisting of $\Theta$ such that $\dim T_{\Theta} F^{\Lambda}(X) = \dim T_{\Phi} F^{\Lambda}(X) = n-k-\delta$ and any component of $S$ containing $\Phi$ has dimension at most $\delta+\max\{s-k, -1\}$.
\end{enumerate}
\end{theorem}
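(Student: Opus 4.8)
\section*{Proof proposal}

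The plan is to run the $k$-plane analogue of the argument for Theorem \ref{thm-nonfreelines}, using Lemma \ref{lem-downwardSet} to isolate which equations control the tangent space and then Lemma \ref{lem-polys} to bound the degenerate locus. First I would fix $\Phi = V(x_{k+1}, \dots, x_n)$, choose a general $(k-1)$-plane $\Lambda \subset \Phi$, and apply Lemma \ref{lem-downwardSet} to produce a downward set $T' \subset T$ for which $\{ L(c_I) \mid I \in T' \}$ is a basis of $\spa\{ L(c_I) \mid I \in T \}$. Setting $\delta = |T'|$, the tangent space $T_{\Phi} F^{\Lambda}(X)$ is cut out inside $\PP^{n-k}$ by the linear forms $L(c_I)$, so $\dim T_{\Phi} F^{\Lambda}(X) = n-k-\delta$, and by Remark \ref{rem-PhiLambdaDim} this is independent of the choice of $\Lambda \subset \Phi$. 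The entire dichotomy then turns on whether $\emptyset \in T'$.

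If $\emptyset \in T'$, then since the only downward set containing $\emptyset$ is $T$ itself, we have $T' = T$ and $\delta = \binom{d+k-1}{k}$. In this case all the $L(c_I)$ are independent, so the $c_I$ meet transversely at $\Phi$; hence $F^{\Lambda}(X)$ is a local complete intersection, smooth at $\Phi$ of codimension $\binom{d+k-1}{k}$. Thus $\Phi$ is a smooth point and its component has the expected dimension $n-k-\binom{d+k-1}{k}$, which is conclusion \eqref{case-smooth}.

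Now suppose $\emptyset \notin T'$. Then $T' \subsetneq T$, so $\delta < \binom{d+k-1}{k}$, and since $T'$ is a basis of the span we get $L(c_{\emptyset}) \in \spa\{ L(c_I) \mid I \in T' \}$; every $I \in T'$ is nonempty, so each $c_I$ with $I \in T'$ has degree $d - |I| < d$. Let $S \subset F^{\Lambda}(X)$ be the locus where $\dim T_{\Theta} F^{\Lambda}(X) = n-k-\delta$, equivalently where the Jacobian of $(c_I)_{I \in T}$ has rank exactly $\delta$. The crux is to show $S$ is locally contained in the tangency locus of $V(c_{\emptyset})$ with $V(\{ c_I \mid I \in T' \})$. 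The $\delta \times (n-k)$ submatrix indexed by $T'$ has rank $\delta$ at $\Phi$, so by lower semicontinuity of rank it has rank $\delta$ on a Zariski-open neighborhood $U$ of $\Phi$; on $S \cap U$ the full Jacobian has rank exactly $\delta$, so the rows indexed by $T'$ already span every other row, in particular the $\emptyset$-row $L_{\Theta}(c_{\emptyset})$. Since each $\Theta \in S \cap U$ is a common zero of all the $c_I$ and satisfies $L_{\Theta}(c_{\emptyset}) \in \spa\{ L_{\Theta}(c_I) \mid I \in T' \}$, it lies in the tangency locus of $V(c_{\emptyset})$ with $V(\{ c_I \mid I \in T' \})$. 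After a general choice of the coordinates $x_0, \dots, x_{k-1}$, Lemma \ref{lem-cEmptySmooth} gives that $V(c_{\emptyset})$ has singular locus of dimension at most $s' = \max\{ s-k, -1 \}$, so Lemma \ref{lem-polys}, applied with $h = c_{\emptyset}$ of degree $d$ and the $r = \delta$ polynomials $\{ c_I \mid I \in T' \}$ of degree $< d$, bounds this tangency locus by $\delta + s'$. Therefore every component of $S$ through $\Phi$ has dimension at most $\delta + \max\{ s-k, -1 \}$, which is conclusion \eqref{case-singular}.

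The one genuinely delicate step is the semicontinuity argument above: it is what upgrades the pointwise tangency at $\Phi$ coming from Lemma \ref{lem-downwardSet} into tangency along all of $S$ near $\Phi$, so that Lemma \ref{lem-polys} applies to the whole locus rather than at a single point. One must use that the $T'$-indexed rows staying independent is precisely what forces the $\emptyset$-row into their span \emph{on $S$}, where the total rank is pinned at $\delta$; off $S$ the rank can jump and tangency can fail, which is exactly why the conclusion concerns only $S$ and only near $\Phi$. A secondary point deserving a remark is checking that the general coordinate choice required by Lemma \ref{lem-cEmptySmooth} is compatible with having already fixed $\Phi$ and a general $\Lambda \subset \Phi$, which holds since one retains the freedom to modify $x_0, \dots, x_{k-1}$ by forms in $x_k, \dots, x_n$ without disturbing $\Lambda$ or $\Phi$.
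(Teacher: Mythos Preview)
Your proof is correct and follows essentially the same approach as the paper: both reduce to Lemma~\ref{lem-downwardSet} to see that when $\delta < \binom{d+k-1}{k}$ the downward set $T'$ misses $\emptyset$, and then invoke Lemma~\ref{lem-cEmptySmooth} and Lemma~\ref{lem-polys} to bound the tangency locus. Your semicontinuity argument (that the $T'$-rows stay independent near $\Phi$, so on $S$ they span the $\emptyset$-row) makes explicit the step the paper compresses into ``$V(c_{\emptyset})$ is tangent to $V(\{ c_I \mid I \in T' \})$ at a general point of $S_0$,'' and your remark on coordinate compatibility is a welcome clarification.
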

\begin{proof}
If $T_{\Phi} F^{\Lambda}(X)$ has the expected dimension $n-k-\binom{d+k-1}{k}$, then we are in case \ref{case-smooth} and are done, so suppose not. Let $\delta$ be defined by $\dim T_{\Phi} F^{\Lambda}(X) = n-k-\delta$, and let $S_0$ be a component of $S = \{ \Theta \in F^{\Lambda} | \: \dim T_{\Theta} F^{\Lambda} (X) = n-k-\delta \}$ containing $\Phi$. By Lemma \ref{lem-cEmptySmooth}, $V(c_{\emptyset})$ will have singular locus of dimension at most $\max \{ s-k, -1\}$. By Lemma \ref{lem-downwardSet}, there is a downward set $T' \subset T$ with $|T'| = \delta$ such that $V(c_{\emptyset})$ is tangent to $V(\{ c_I | \: I \in T' \})$ at a general point of $S_0$. By Lemma \ref{lem-polys} this can only happen on a locus of dimension at most $\delta + \max\{ s-k, -1\}$. The result follows.
\end{proof}

For the corollary, we need to set up some notation. Let $B$ be a subvariety of $F_k(X)$, and let $\uU_B$ be defined by
\[ \uU_B = \{(\Lambda, \Phi) | \: \Lambda \in \gG(k-1,n), \Phi \in B, \Lambda \subset \Phi \} ,\]
with $\pi_1$ and $\pi_2$ the two projections.

\begin{corollary}
\label{cor-dimBadLocus}
Suppose $X \subset \PP^n$ is a degree $d$ hypersurface with singular locus of dimension at most $s$. Let $B \subset F_k(X)$ be a collection of $k$-planes $\Phi$ with $\dim T_{\Phi} F^{\Lambda}(X) = n-k - \delta$ for any $(k-1)$-plane in $\Phi$ and some $\delta < \binom{d+k-1}{k}$. Then we have
\[  \dim B \leq \dim \pi_1(\uU_B) + \delta + \max\{s-2k,-k-1\} .\]
\end{corollary}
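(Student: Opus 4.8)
The plan is to bound $\dim B$ by playing the two projections of the incidence variety $\uU_B$ against each other, feeding the tangency estimate of Theorem \ref{thm-kPlanesContLambda} into the fibers of $\pi_1$. Since it suffices to bound each irreducible component of $B$ separately, I would fix a component $Z$ of $\uU_B$ and work with it throughout. The projection $\pi_2\colon \uU_B \to B$ exhibits $\uU_B$ as the bundle of $(k-1)$-planes inside each $k$-plane of $B$, whose fibers are the dual $\PP^k$ of $(k-1)$-planes in a fixed $\PP^k$; hence $\pi_2$ is a $\PP^k$-bundle, its components correspond to those of $B$, the restriction $Z \to \pi_2(Z)$ is again a $\PP^k$-bundle onto a component of $B$, and $\dim Z = \dim \pi_2(Z) + k$.

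Next I would study $\pi_1\colon Z \to \gG(k-1,n)$. I would choose $(\Lambda_0,\Phi_0)\in Z$ general for two simultaneously open dense conditions: that $\Lambda_0$ is a general $(k-1)$-plane of $\Phi_0$ (possible because $Z\to\pi_2(Z)$ is a $\PP^k$-bundle, so a general point of $Z$ lies over a general $\Phi_0$ with $\Lambda_0$ general in its fiber), and that $(\Lambda_0,\Phi_0)$ lies over the locus of $\pi_1(Z)$ where the $\pi_1$-fiber attains its generic dimension $\dim Z - \dim \pi_1(Z)$. The first condition lets me apply Theorem \ref{thm-kPlanesContLambda} to $\Phi_0$ with this $\Lambda_0$; since $\delta < \binom{d+k-1}{k}$ we land in case \ref{case-singular}, so $\Phi_0$ lies in the locus $S\subset F^{\Lambda_0}(X)$ of $\Theta$ with $\dim T_\Theta F^{\Lambda_0}(X)=n-k-\delta$, and every component of $S$ through $\Phi_0$ has dimension at most $\delta+\max\{s-k,-1\}$.

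I would then identify the fiber $\pi_1^{-1}(\Lambda_0)\cap Z$ (via projection to $B$) with the set of $\Phi\in\pi_2(Z)$ satisfying $\Lambda_0\subset\Phi$. By the hypothesis on $B$, together with Remark \ref{rem-PhiLambdaDim} guaranteeing that the tangent-space dimension is independent of the chosen $(k-1)$-plane, every such $\Phi$ has $\dim T_\Phi F^{\Lambda_0}(X)=n-k-\delta$, so this fiber is contained in $S$. Consequently the component of the fiber through the general point $\Phi_0$ lands in a component of $S$ through $\Phi_0$, and by the second genericity condition its dimension equals $\dim Z - \dim \pi_1(Z)$, whence
\[ \dim Z - \dim \pi_1(Z) \leq \delta + \max\{s-k,-1\}. \]
Substituting $\dim Z = \dim \pi_2(Z)+k$ and using $\max\{s-k,-1\}-k = \max\{s-2k,-k-1\}$ gives $\dim \pi_2(Z)\leq \dim \pi_1(Z)+\delta+\max\{s-2k,-k-1\}$; since $\pi_2(Z)$ is a component of $B$ and $\pi_1(Z)\subseteq\pi_1(\uU_B)$, taking the maximum over components yields the claimed inequality.

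I expect the main obstacle to be the bookkeeping of genericity across the two projections, rather than any single hard estimate. Theorem \ref{thm-kPlanesContLambda} needs $\Lambda$ general \emph{inside} $\Phi$, whereas the fiber-dimension computation for $\pi_1$ needs $\Lambda_0$ general in the \emph{image} $\pi_1(Z)$, and one must be certain a single general point of $Z$ meets both requirements; the resolution is simply that each is a dense open condition on the irreducible $Z$, combined with the fact that the local dimension of the $\pi_1$-fiber through a general point of $Z$ is exactly the generic fiber dimension, so that the bound on the component of $S$ through $\Phi_0$ genuinely controls $\dim Z - \dim \pi_1(Z)$. The only other thing to track carefully is the arithmetic shift by $k$ produced by the $\PP^k$-bundle $\pi_2$, which is precisely what converts the $\max\{s-k,-1\}$ of Theorem \ref{thm-kPlanesContLambda} into the $\max\{s-2k,-k-1\}$ of the statement.
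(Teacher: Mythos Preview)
Your proposal is correct and follows essentially the same approach as the paper: both use that $\pi_2$ is a $\PP^k$-bundle to get $\dim \uU_B = \dim B + k$, then invoke Theorem \ref{thm-kPlanesContLambda} to bound the $\pi_1$-fibers by $\delta + \max\{s-k,-1\}$ and combine. The paper compresses this into three lines and glosses over the genericity issues you carefully untangle (reconciling ``$\Lambda$ general in $\Phi$'' with ``$\Lambda$ general in $\pi_1(\uU_B)$'' and checking via Remark \ref{rem-PhiLambdaDim} that the whole fiber lands in $S$), so your version is a more explicit execution of the same argument rather than a different one.
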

\begin{proof}
By considering the projection $\pi_2$, we see that the dimension of $\uU_B$ is $\dim B + k$. By Theorem \ref{thm-kPlanesContLambda}, the fibers of $\pi_1$ over a general point of $\pi_1(\uU_B)$ will have dimension at most $\delta+\max\{s-k,-1\}$. Thus, 
\[ \dim B = \dim \uU_B - k \leq \dim \pi_1(\uU_B) + \delta+\max\{s-2k, - k-1\} .\]
\end{proof}

\begin{corollary}
\label{cor-expDimkplanes}
If $X \subset \PP^n$ is a degree $d$ hypersurface with singular locus having dimension at most $s$ and $n \geq 2\binom{d+k-1}{k}+\max\{s-1,k-2\}$, then $F_k(X)$ has the expected dimension $(k+1)(n-k) - \binom{k+d}{d}$.
\end{corollary}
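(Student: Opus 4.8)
The plan is to induct on $k$, using Corollary \ref{cor-dimBadLocus} to control the components on which the $k$-planes fail to be ``free'' and a cohomological argument to handle the rest. Write $E_k := (k+1)(n-k) - \binom{d+k}{k}$ for the expected dimension. The base case $k=0$ is immediate, since $F_0(X) = X$ has pure dimension $n-1 = E_0$. For the inductive step I fix $k \ge 1$ and an arbitrary irreducible component $C$ of $F_k(X)$. Because $F_k(X)$ is locally cut out in $\gG(k,n)$ by the $\binom{d+k}{k}$ coefficients of $f$ restricted to the universal $k$-plane, every such $C$ automatically satisfies $\dim C \ge E_k$, so the whole content is the reverse inequality. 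Taking $\Phi \in C$ general and $\Lambda \subset \Phi$ a general $(k-1)$-plane, I set $\delta := (n-k) - \dim T_\Phi F^\Lambda(X)$, which by Remark \ref{rem-PhiLambdaDim} is independent of $\Lambda$ and satisfies $0 \le \delta \le \binom{d+k-1}{k}$. The argument then splits on whether $\delta$ attains its maximum.

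When $\delta = \binom{d+k-1}{k}$ --- the analogue of a free line --- I claim $\Phi$ is a smooth point of $F_k(X)$ lying on a component of dimension $E_k$. Twisting the normal bundle sequence $0 \to N_{\Phi/X} \to \OO_\Phi(1)^{n-k} \to \OO_\Phi(d) \to 0$ by $\OO(-1)$ and taking cohomology on $\Phi \cong \PP^k$ identifies $\dim T_\Phi F^\Lambda(X) = h^0(N_{\Phi/X}(-1))$ with $(n-k) - \binom{d+k-1}{k} + h^1(N_{\Phi/X}(-1))$, so that $\delta = \binom{d+k-1}{k}$ is exactly the condition $H^1(N_{\Phi/X}(-1)) = 0$. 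A routine computation on $\PP^k$ shows $H^i(N_{\Phi/X}(-i)) = 0$ for all $2 \le i \le k$ regardless, so this vanishing upgrades to $0$-regularity of $N_{\Phi/X}$ in the sense of Castelnuovo--Mumford; hence $H^i(N_{\Phi/X}) = 0$ for all $i \ge 1$. Vanishing of the obstruction space $H^1(N_{\Phi/X})$ makes $\Phi$ a smooth point of $F_k(X)$, and then $\dim_\Phi F_k(X) = h^0(N_{\Phi/X}) = \chi(N_{\Phi/X}) = E_k$ by the same sequence. Thus $\dim C = E_k$ on every component whose general point is free.

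On the remaining components the general $\Phi$ has $\delta \le \binom{d+k-1}{k} - 1$, and here I apply Corollary \ref{cor-dimBadLocus} to the dense locus $B$ of $C$ where $\dim T_\Phi F^\Lambda(X)$ is constant, so that $\dim B = \dim C$. Since every $\Lambda$ appearing in $\uU_B$ is a $(k-1)$-plane contained in $X$, the image $\pi_1(\uU_B)$ lies inside $F_{k-1}(X)$, whose dimension is $E_{k-1}$ by the inductive hypothesis; one checks that $n \ge 2\binom{d+k-1}{k} + \max\{s-1, k-2\}$ implies the corresponding bound at level $k-1$, so the induction is legitimate. Corollary \ref{cor-dimBadLocus} then gives $\dim C \le E_{k-1} + \delta + \max\{s-2k, -k-1\}$, and substituting $\delta \le \binom{d+k-1}{k} - 1$ together with the identity $E_k - E_{k-1} = n - 2k - \binom{d+k-1}{k}$ reduces the desired inequality $\dim C \le E_k$ to precisely the hypothesis $n \ge 2\binom{d+k-1}{k} + \max\{s-1, k-2\}$; this is the step where the numerical bound is forced and is pleasingly sharp. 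The main obstacle is the free case: the stated Theorem \ref{thm-kPlanesContLambda} only controls $F^\Lambda(X)$, so the real work is the passage from the tangential condition $H^1(N_{\Phi/X}(-1)) = 0$ to the global statement $H^1(N_{\Phi/X}) = 0$ about $F_k(X)$, which is where Castelnuovo--Mumford regularity enters. Everything else is dimension bookkeeping.
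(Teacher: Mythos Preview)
Your proof is correct, and the non-free case ($\delta < \binom{d+k-1}{k}$) is essentially the paper's argument repackaged through Corollary~\ref{cor-dimBadLocus} rather than Theorem~\ref{thm-kPlanesContLambda} directly. The genuine difference is in the free case. The paper treats both cases uniformly via the incidence correspondence $\uU_B \to F_{k-1}(X)$: when $\delta = \binom{d+k-1}{k}$, Theorem~\ref{thm-kPlanesContLambda}(1) says the fiber $F^{\Lambda}(X)$ is smooth of the expected dimension at $\Phi$, and then $\dim B \le \dim F_{k-1}(X) + (\text{fiber}) - k = E_k$ using the inductive hypothesis on $F_{k-1}(X)$. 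You instead bypass the induction entirely in this case by upgrading the pointwise vanishing $H^1(N_{\Phi/X}(-1))=0$ to $0$-regularity of $N_{\Phi/X}$ via the automatic vanishing of $H^i(N_{\Phi/X}(-i))$ for $2\le i\le k$, and then reading off $H^1(N_{\Phi/X})=0$ so that $\Phi$ is a smooth point of $F_k(X)$ of dimension $\chi(N_{\Phi/X})=E_k$. Your route is more intrinsic---it explains \emph{why} free $k$-planes are unobstructed in $F_k(X)$ itself, not just in $F^{\Lambda}(X)$---at the cost of invoking Castelnuovo--Mumford regularity; the paper's route is more economical, needing nothing beyond what was already set up for the non-free case.
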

\begin{proof}
We prove the result by induction on $k$. For $k=0$, the result is clear. Now suppose the result is known for $F_{k-1}(X)$. Let $B$ be an irreducible component of $F_{k}(X)$, and let 
\[ \uU_B = \{(\Lambda, \Phi) | \: \Lambda \in \gG(k-1,n), \Phi \in B, \Lambda \subset \Phi \} ,\]
with $\pi_1$ and $\pi_2$ the two projections. If the general fiber of $\pi_1$ has the expected dimension $n-k-\binom{d+k-1}{k}$, then 
\[ \dim B = \dim \uU_B - k = \dim F_{k-1}(X) + n-k-\binom{d+k-1}{k} - k \]
\[ = k(n-k+1) - \binom{d+k-1}{k-1} + n-k - \binom{d+k-1}{k} - k = (k+1)(n-k) - \binom{d+k}{k} . \]

Thus, it is enough to show that a general fiber of $\pi_1$ has the expected dimension. Let $(\Lambda, \Phi)$ be a general point of $\uU_B$. Let $\delta$ be $n-k-\dim T_{\Phi} F^{\Lambda}(X)$. If $\delta = \binom{d+k-1}{k}$ the result follows, so assume $\delta \leq \binom{d+k-1}{k}-1$. By Theorem \ref{thm-kPlanesContLambda}, the fibers of $\pi_1$ over a general point of $\pi_1(\uU_B)$ have dimension at most $\delta + \max\{s-k,-1\} \leq \binom{d+k-1}{k}+\max\{s-k,-1\}-1$. This will be at most $n-k-\binom{d+k-1}{k}$ if
\[ \binom{d+k-1}{k}+\max\{s-k,-1\}-1 \leq n-k-\binom{d+k-1}{k}  \]
or if
\[ n \geq 2 \binom{d+k-1}{k} + \max\{s-1, k-2 \} .\]
\end{proof}

\subsection{Irreducibility}
We need the following corollary of Proposition \ref{cor-connCIs}.

\begin{corollary}
For $n \geq k+\binom{d+k-1}{k}+1$, the variety $F^{\Lambda}(X)$ is connected in dimension $n-k-\binom{d+k-1}{k}-1$.
\end{corollary}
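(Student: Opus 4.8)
The plan is to apply Proposition \ref{cor-connCIs} essentially verbatim, after recording $F^{\Lambda}(X)$ as a subvariety of a projective space cut out by a controlled number of homogeneous equations. Recall from equation (\ref{eq-cIs}) and the proposition immediately following it that the $k$-planes containing the fixed $(k-1)$-plane $\Lambda$ are parametrized by a $\PP^{n-k}$, and that inside this $\PP^{n-k}$ the scheme $F^{\Lambda}(X)$ is exactly the common zero locus $V(\{c_I\}_{I \in T})$ of the homogeneous polynomials $c_I$ in the coordinates $x_k, \dots, x_n$. The number of these equations is $|T| = \binom{d+k-1}{k}$, which is the only quantitative input we need.

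First I would invoke Proposition \ref{cor-connCIs} with the ambient projective space taken to be $\PP^{n-k}$ (so that the role of ``$n$'' in that proposition is played by $n-k$) and with the number of defining equations $c$ equal to $\binom{d+k-1}{k}$. The proposition then immediately yields that $V(\{c_I\}_{I \in T}) = F^{\Lambda}(X)$ is connected in dimension $(n-k) - \binom{d+k-1}{k} - 1$, which is precisely the asserted bound. The hypothesis $n \geq k + \binom{d+k-1}{k} + 1$ plays no role in the application itself; it is equivalent to $(n-k) - \binom{d+k-1}{k} - 1 \geq 0$, and so merely records the range in which the conclusion has content, in particular forcing $F^{\Lambda}(X)$ to be at least connected.

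I do not expect a genuine obstacle: the entire content is the translation of the complete-intersection-type connectivity of Proposition \ref{cor-connCIs} into the coordinates on the $\PP^{n-k}$ of $k$-planes through $\Lambda$. The single point requiring care is bookkeeping, namely confirming that the ambient space has dimension $n-k$ and that there are exactly $\binom{d+k-1}{k}$ equations; both follow at once from the explicit description of $F^{\Lambda}(X)$, so the proof reduces to citing the proposition with these substitutions.
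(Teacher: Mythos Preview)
Your proposal is correct and follows essentially the same approach as the paper: both arguments simply invoke Proposition~\ref{cor-connCIs} for the description of $F^{\Lambda}(X)$ as $V(\{c_I\}_{I\in T})$ inside $\PP^{n-k}$, with $|T|=\binom{d+k-1}{k}$ equations. Your write-up is a faithful expansion of the paper's one-line proof, including the observation that the hypothesis on $n$ merely ensures the connectivity dimension is nonnegative.
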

\begin{proof}
This follows immediately from Corollary \ref{cor-connCIs} and the description of $F^{\Lambda}(X)$ as $V(\{c_I\})$.
\end{proof}

From this we can deduce irreducibility for the space of $k$-planes.

\begin{theorem}
\label{thm-irrkplanes}
If $n \geq 2 \binom{d+k-1}{k} + \max\{s+1,k \}$, and $X \subset \PP^n$ is a degree $d$ hypersurface with singular locus of dimension $s$, then $F_k(X)$ is irreducible of the expected dimension.
\end{theorem}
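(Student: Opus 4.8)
The plan is to induct on $k$, using the incidence correspondence $\uU = \{(\Lambda,\Phi) : \Lambda \in F_{k-1}(X),\ \Phi \in F_k(X),\ \Lambda \subset \Phi\}$ with its projections $\pi_1 : \uU \to F_{k-1}(X)$ and $\pi_2 : \uU \to F_k(X)$. The base case $k=0$ is just irreducibility of $X = F_0(X)$, which follows from Serre's criterion since under our hypothesis the singular locus of $X$ has codimension at least two. For the inductive step I would first record the structural facts: our bound for $k$ implies the bound for $k-1$ (as both $\binom{d+k-1}{k}$ and $\max\{s+1,k\}$ are monotone), so $F_{k-1}(X)$ is irreducible by induction; Corollary \ref{cor-expDimkplanes} applies and gives that $F_k(X)$ has the expected dimension; and since $\pi_2$ is a $\gG(k-1,k)$-bundle, $\uU$ is equidimensional of dimension $\dim F_k(X)+k$, with its components in bijection with those of $F_k(X)$.

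The heart of the argument is a single dimension estimate for the bad locus $\Sigma \subset \uU$ of pairs $(\Lambda,\Phi)$ at which $\dim T_\Phi F^\Lambda(X)$ exceeds the expected value $n-k-\binom{d+k-1}{k}$. By Remark \ref{rem-PhiLambdaDim} this condition depends only on $\Phi$, so $\Sigma$ is the union of the loci $\uU_\delta$ lying over $B_\delta = \{\Phi : \dim T_\Phi F^\Lambda(X) = n-k-\delta\}$ with $\delta < \binom{d+k-1}{k}$. Applying Corollary \ref{cor-dimBadLocus} to each $B_\delta$ and adding the $k$-dimensional fibers of $\uU_\delta \to B_\delta$ gives $\dim \uU_\delta \le \dim F_{k-1}(X) + \delta + \max\{s-k,-1\}$; substituting $\delta \le \binom{d+k-1}{k}-1$ shows that the hypothesis $n \ge 2\binom{d+k-1}{k}+\max\{s+1,k\}$ is exactly what is needed to conclude $\dim \Sigma \le \dim \uU - 2$.

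From this one estimate everything follows. First, no component of $\uU$ can be contained in $\Sigma$, so every component dominates $F_{k-1}(X)$: a non-dominating component would have generic $\pi_1$-fiber of larger than expected dimension, hence generic point in the closed set $\Sigma$, hence would lie in $\Sigma$, contradicting $\dim \Sigma \le \dim \uU - 2 < \dim \uU$. Since all components dominate and $\uU$ is equidimensional, the general fiber $F^\Lambda(X)$ is pure of the expected dimension, hence a complete intersection in $\PP^{n-k}$ and in particular $S_2$; meanwhile $\dim \Sigma \le \dim \uU - 2$ forces the singular locus of $F^\Lambda(X)$ to have codimension at least two in it, so it is $R_1$. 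By Serre's criterion $F^\Lambda(X)$ is then normal, and it is connected by complete-intersection connectedness (Proposition \ref{cor-connCIs}), hence irreducible. Finally, a family over an irreducible base with irreducible general fiber all of whose total-space components dominate the base has irreducible total space, so $\uU$ is irreducible and therefore $F_k(X) = \pi_2(\uU)$ is too.

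The main obstacle is the bookkeeping in the key estimate: one must check that the output of Corollary \ref{cor-dimBadLocus}, the fiber dimension $k$ of $\pi_2$, and the constraint $\delta \le \binom{d+k-1}{k}-1$ combine to give \emph{precisely} the stated bound, and — what makes the proof economical — that the single inequality $\dim \Sigma \le \dim\uU-2$ is strong enough to yield simultaneously both the domination of all components and the codimension-two bound on the singular locus of the generic fiber, so that no separate freeness-type input is required beyond Remark \ref{rem-PhiLambdaDim} and Corollary \ref{cor-dimBadLocus}.
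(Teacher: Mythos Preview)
Your proposal is correct and follows essentially the same strategy as the paper: induct on $k$, use Corollary~\ref{cor-dimBadLocus} to bound the bad locus, deduce that every component of $\uU$ dominates $F_{k-1}(X)$, and then show the general fiber $F^{\Lambda}(X)$ is irreducible via its complete-intersection description. The only cosmetic difference is that the paper phrases fiber irreducibility using the ``connected in dimension $r$'' notion from Proposition~\ref{cor-connCIs} rather than Serre's criterion, and treats the domination and fiber-singularity estimates separately rather than packaging them into the single inequality $\dim \Sigma \le \dim \uU - 2$; your formulation is in fact slightly cleaner.
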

\begin{proof}
By Corollary \ref{cor-expDimkplanes}, we know that $F_k(X)$ has the expected dimension, so it remains to show irreducibility. Let $\uU$ be the set of pairs $(\Lambda, \Phi)$ such that $\Lambda$ is a $k$-plane in $\Phi \in F_k(X)$. There are two parts to the proof. First, we show that any irreducible component of $\uU$ must dominate $F_{k-1}(X)$. Second, we show that a general fiber of $\pi_1: \uU \to F_{k-1}(X)$ is irreducible. The result will follow by induction.

For any set $S \subset F_k(X)$, let $\uU_S$ be the space of pairs $(\Lambda, \Phi)$ with $\Phi$ a $k$-plane in $S$ and $\Lambda$ a $(k-1)$-plane in $\Phi$.  Let $B$ be an irreducible component of $F_k(X)$ such that $\uU_B$ does not dominate $F_{k-1}(X)$. Then since $\uU_B$ must have dimension at least the expected dimension but $\pi_1$ is not dominant, it follows that the fibers of the map $\pi_1|_{\uU_B}$ must have dimension larger than $n-k-\binom{d+k-1}{k}$.

Let $B' \subset F_k(X)$ be the space of $\Phi$ such that for any $(k-1)$-plane $\Lambda$ in $\Phi$, $\dim T_{\Phi} F^{\Lambda}(X) > n-k-\binom{d+k-1}{k}$. By Remark \ref{rem-PhiLambdaDim}, if this condition on the dimension of $T_{\Phi} F^{\Lambda}(X)$ is true for a single pair $(\Lambda, \Phi)$, it will be true for any $\Lambda' \subset \Phi$. From the discussion above, it follows that $B$ must be an irreducible component of $B'$. Let $\delta = n-k- \dim T_{\Phi} F^{\Lambda}(X)$ for a general pair $(\Lambda, \Phi) \in \uU_B$. By construction of $B'$, $\delta$ is necessarily greater than $\binom{d+k-1}{k}$. By Corollary \ref{cor-dimBadLocus},
\[ \dim B \leq \dim \pi_1(\uU_B) + \delta+\max\{s-2k,-k-1 \} \leq k(n-k) - \binom{d+k-1}{k-1} + \delta + \max\{s-k, -1\} ,\]
where the last inequality follows from the fact $\pi_1(\uU_B) \leq F_{k-1}(X) = k(n-k+1) - \binom{d+k-1}{k-1}$ by Corollary \ref{cor-expDimkplanes}.

This means $B$ cannot be an irreducible component of $F_k(X)$ if
\[ k(n-k) - \binom{d+k-1}{k-1} + \delta + \max\{s-k, -1\} \leq (k+1)(n-k) - \binom{d+k}{k} -1  \]
or equivalently, since $\delta \leq \binom{d+k-1}{k}-1$,
\[ n \geq \binom{d+k}{k} - \binom{d+k-1}{k-1} + \binom{d+k-1}{k} + k + \max\{s-k,-1\} = 2\binom{d+k-1}{k} + \max\{s, k-1 \} . \]
This holds by our assumptions on $n$.

Thus, it suffices to show that a general fiber of $\alpha: \uU_B \to F_{k-1}(X)$ is irreducible. We know that such a fiber is connected in dimension $n-k-\binom{d+k-1}{k}-1$. The singular locus of a general fiber $\alpha^{-1}(\Lambda)$ has dimension at most 
\[ \dim \uU_B - \dim F_{k-1}(X) \leq \delta + \max\{s-k,-1\} \leq \binom{d+k-1}{k}-k-2 + \max\{s+1,k \} . \]
Since $n \geq 2\binom{d+k-1}{k} + \max\{s+1,k \}$, we see that the singular locus of $\alpha^{-1}(\Lambda)$ has dimension at most
\[ n-k-\binom{d+k-1}{k} - 2 .\]
Since $\alpha^{-1}(\Lambda)$ is connected in dimension $n-k-\binom{d+k-1}{k}-1$, this shows that $\alpha^{-1}(\Lambda)$ must be irreducible.
\end{proof}

\section{Unirationality}
\label{sec-unirationality}
In this section, we consider the unirationality of hypersurfaces. We find explicit, closed-form bounds for when arbitrary smooth hypersurfaces are unirational, using the technique of \cite{HMP} based on a construction described in \cite{paranjape-srinivas}. Our improved bounds come from the new result on $k$-planes, but for the reader's convenience, we briefly describe the construction from \cite{HMP}. The referee pointed out to us that a modification to the argument in \cite{paranjape-srinivas} combined with results of \cite{starr-kplanes} provides a slightly better bound than the one we obtain in this paper. We start with a Bertini Lemma from \cite{HMP}.

\begin{lemma}[cf Lemma 4.1 from \cite{HMP}]
\label{lem-Bertini}
Consider a linear series $D = \{D_p \subset \PP^n \}_{p \in \PP^m}$ of hypersurfaces in $\PP^n$. Let $b$ be the dimension of the base locus of $D$ (where $b=-1$ if the base locus is empty). Set
\[ S_k = \{ p \in \PP^m | \: (\dim D_p)_{\sing} \geq k + b \} . \]
Then $\dim S_k \leq m-k$.
\end{lemma}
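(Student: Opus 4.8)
The plan is to recast the statement as a single incidence correspondence and reduce it to one dimension estimate attached to the linear system. Set $\Sigma = \{(p,x)\in\PP^m\times\PP^n : x\in(D_p)_{\sing}\}$ with projections $\pi_1,\pi_2$. Since the fiber of $\pi_1$ over $p$ is $(D_p)_{\sing}$, the locus in $\PP^m$ where this singular locus has dimension at least $j$ pulls back in $\Sigma$ to something of dimension at least (that locus) $+\,j$; taking $j=k+b$, this locus is exactly $S_k$, so $\dim S_k+(k+b)\le\dim\Sigma$. Hence it suffices to prove the single bound $\dim\Sigma\le m+b$, which then yields $\dim S_k\le m-k$ for all $k$ simultaneously.

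To bound $\dim\Sigma$ I would use $\pi_2$, whose fiber over $x$ is the projectivized kernel $\PP(\ker M(x))$ of the jet-evaluation map $M(x)\colon p\mapsto\bigl(s_p(x),\,ds_p|_x\bigr)$, where $s_p$ is the section cutting out $D_p$. This fiber has dimension $m-\rho(x)$ for $\rho(x)=\operatorname{rk}M(x)$. Stratifying $\PP^n$ by $\rho$, the rank-$\le\rho$ locus $V_{\le\rho}$ contributes at most $\dim V_{\le\rho}+(m-\rho)$ to $\dim\Sigma$. Over the base locus $B$ the fiber is at most $\PP^m$ and $\dim B\le b$, contributing at most $m+b$; so everything reduces to the \emph{key estimate}
\[ \dim\bigl(V_{\le\rho}\setminus B\bigr)\le\rho+b\qquad\text{for all }\rho. \]

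I would prove the key estimate by induction on $b$. For the base case $b=-1$ the system is base-point-free, so $\phi_D\colon\PP^n\to\PP^m$ is a morphism, and since $\OO(\deg)$ is ample it is finite. A short computation via the Euler relation identifies $V_{\le\rho}$ with the locus where $\operatorname{rk}d\phi_D\le\rho-1$, i.e. where the corank is at least $n-\rho+1$. For a component $Z$ of this locus, $\phi_D|_Z$ is finite, hence generically unramified onto its image in characteristic $0$, so its differential is generically injective; thus $T_xZ\cap\ker d\phi_{D,x}=0$, and comparing dimensions inside $T_x\PP^n$ forces $\dim Z\le n-(n-\rho+1)=\rho-1$. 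For the inductive step I would cut by a general hyperplane $H$: the restricted system $D|_H$ has base locus $B\cap H$ of dimension $b-1$, and since restricting a jet can only drop its rank we get $V_{\le\rho}(D)\cap H\subseteq V_{\le\rho}(D|_H)$; combining the inductive hypothesis on $\PP^{n-1}$ with the fact that a general $H$ cuts $\dim(V_{\le\rho}(D)\setminus B)$ by exactly one gives the bound for $b$.

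The main obstacle is precisely this key estimate, and it is delicate because it runs \emph{against} the general theory of determinantal loci: for an arbitrary bundle map the rank-$\le\rho$ locus can exceed its expected codimension, so no formal argument bounds $\dim V_{\le\rho}$ from above. The decisive point is that $M(x)$ is not arbitrary but encodes the differential of the morphism $\phi_D$ attached to the linear system, and finiteness of $\phi_D$ away from $B$ forbids contracting positive-dimensional subvarieties — with the base locus supplying the only mechanism allowing such contraction, which is exactly what produces the $+b$ in the estimate.
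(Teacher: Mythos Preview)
The paper does not give its own proof of this lemma; it is quoted directly from \cite{HMP} (Lemma~4.1) and used as a black box in Section~\ref{sec-unirationality}. So there is no argument in the paper to compare yours against.

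Your proof is correct. The reduction to the single estimate $\dim\Sigma\le m+b$ via the incidence correspondence is clean, and the key bound $\dim(V_{\le\rho}\setminus B)\le\rho+b$ is established by a nice argument: in the base case $b=-1$ you use that $\phi_D^*\OO_{\PP^m}(1)\cong\OO_{\PP^n}(\deg)$ is ample, so $\phi_D$ is finite, and in characteristic~$0$ a finite morphism restricted to any subvariety $Z$ is generically unramified onto its image; transversality of $T_xZ$ and $\ker d\phi_{D,x}$ inside $T_x\PP^n$ then forces $\dim Z\le\rho-1$. The inductive step by a general hyperplane section is routine, and the containment $V_{\le\rho}(D)\cap H\subseteq V_{\le\rho}(D|_H)$ holds because restricting the $1$-jet to $H$ is a quotient map. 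One small point worth stating explicitly for completeness: the base case needs $\phi_D$ to be a genuine morphism to $\PP^m$, hence $m\ge n$, but this is automatic when $b=-1$ since $m+1$ hypersurfaces in $\PP^n$ always have nonempty intersection once $m<n$; after the $b+1$ hyperplane cuts of the induction one is in $\PP^{n-b-1}$ and the same count gives $m\ge n-b-1$, so the base case applies. With that remark the argument is complete.
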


\subsection{Residual hyperplanes}
Recall the basic setup from \cite{HMP}. In order to make the unirationality proof from \cite{paranjape-srinivas} work, we need to be able to work in families of hypersurfaces in varying families of projective spaces. Let $B$ be a scheme. Then one can construct a family of projective spaces over $B$ by taking a vector bundle $E$ on $B$ and taking the projectivization $\PP(E)$ of it. A \emph{family of degree $d$ hypersurfaces} in those projective spaces is the zero locus of a section $\sigma$ of $\OO_{\PP(E)}(d)$ such that $\sigma$ does not vanish on any fibers of the projection $\PP(E) \to B$. A \emph{family of $k$-planes} in $\PP(E)$ is simply $\PP(F)$ where $F \subset E$ is a rank $k+1$ sub-vector bundle.

Recall the geometry of taking a residual hyperplane section. Let $X = V(f)$ be a hypersurface containing a linear space $\Gamma = V(x_{k+1}, \dots, x_n)$. Given a $k+1$-plane $\Phi$ containing $\Gamma$ but not contained in $X$, we can intersect $\Phi$ with $X$ to obtain a degree $d$ hypersurface in $\Phi$. This hypersurface will be $\Gamma \cup Y_{\Phi}$ for some degree $d-1$ hypersurface $Y_{\Phi}$. We call $Y_{\Phi}$ the \emph{residual hypersurface} to $\Gamma$. For later convenience, we describe how to write $Y_{\Phi}$ in coordinates. Let $V(f)$ be a hypersurface containing a linear space $\Gamma = V(x_{k+1}, \dots, x_n)$. Recall from (\ref{eq-cIs}) that we can expand the equation of $f$ around $\Gamma$, getting $f = \sum_I c_I x^I$. Plugging in a $k+1$-plane $ \Phi = [x_0, \dots, x_k, t a_{k+1}, \dots, a_n t]$ containing $\Gamma$ to the equation of $f$, we get
\[ f(\Phi) = \sum_{I \in T} c_I(a_{k+1}, \dots, a_n) x^I t^{d-|I|} . \]
Since $d-|I| \geq 1$ for all $I \in T$, we can divide $f(\Phi)$ by $t$ to get the equation of the residual hypersurface to $\Gamma$. Since $\Gamma$ is cut out in $\Phi$ by the equation $t=0$, we see that if we intersect $Y_{\Phi}$ with $\Gamma$, all terms with $|I| < d-1$ vanish. Thus, the equation of $Y_{\Phi} \cap \Gamma$ will be 
\[ \sum_{I \in T, |I| = d-1} c_I(a_{k+1}, \dots, a_n) x^I . \]
For $|I| = d-1$, $c_I$ will be linear, so we see that the equations of the intersections $\Gamma \cap Y_{\Phi}$ vary linearly with the coordinates $a_{k+1}, \dots, a_n$. This means that the $\Gamma \cap Y_{\Phi}$ form a linear series on $\Gamma$.

\begin{corollary}
\label{cor-linearSeries}
For $X$ a smooth hypersurface containing a $k$-plane $\Gamma$, construct for each $k+1$ plane $\Phi$ not lying in $X$ but containing $\Gamma$ the hypersurfaces $Y_{\Phi}$ as above. Then the hypersurfaces $Y_{\Phi} \cap \Gamma$ form a basepoint free linear series on $\Gamma$.
\end{corollary}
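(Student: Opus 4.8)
The plan is to prove two assertions about the family $\{Y_\Phi \cap \Gamma\}$ as $\Phi$ ranges over $(k+1)$-planes containing $\Gamma$ but not contained in $X$: that it is a linear series, and that it is basepoint free. The first assertion is essentially already established by the discussion preceding the statement. From the coordinate computation, the equation of $Y_\Phi \cap \Gamma$ is $\sum_{I \in T,\, |I| = d-1} c_I(a_{k+1}, \dots, a_n)\, x^I$, and for each $I$ with $|I| = d-1$ the coefficient $c_I$ is a \emph{linear} form in the coordinates $a_{k+1}, \dots, a_n$ on the $\PP^{n-k-1}$ of $(k+1)$-planes through $\Gamma$. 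Hence the assignment $\Phi \mapsto (Y_\Phi \cap \Gamma)$ is linear in the parameters $a_j$, which is exactly the statement that these divisors form a linear series on $\Gamma$.

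The substance of the corollary is therefore basepoint freeness. First I would identify what a base point would mean: a point $q \in \Gamma$ lies in the base locus precisely when $\sum_{|I|=d-1} c_I(a)\, x^I(q) = 0$ for \emph{every} choice of $(a_{k+1}, \dots, a_n)$. Fixing $q$ and varying $a$, this is a linear condition on $a$ for each $q$; since the $c_I$ are themselves linear in $a$, the expression $\sum_{|I|=d-1} c_I(a)\, x^I(q)$ is a single bilinear form in $(a, q)$, and $q$ is a base point exactly when it is annihilated identically in $a$. I would translate this vanishing into the condition that all the partial derivatives $\partial f / \partial x_j$ for $j = k+1, \dots, n$ vanish at $q$, using that the coefficients $c_I$ with $|I| = d-1$ record precisely the linear-in-$(x_{k+1},\dots,x_n)$ part of $f$ along $\Gamma$, i.e. the normal derivatives of $f$ to $\Gamma$.

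Concretely, since $\Gamma = V(x_{k+1}, \dots, x_n)$ lies in $X$, the polynomial $f$ vanishes on $\Gamma$, so along $\Gamma$ the gradient of $f$ has vanishing $x_0, \dots, x_k$ components, and the remaining components $\partial f / \partial x_j|_\Gamma$ ($j > k$) are exactly the linear forms $\sum_{|I|=d-1} c_I\, x^I$ restricted to $\Gamma$, up to reindexing the roles of the variables $a_j$ and $x_j$. A point $q \in \Gamma$ is a base point of the linear series iff all of these normal derivatives vanish at $q$, which says precisely that the full gradient of $f$ vanishes at $q$, i.e. that $q$ is a singular point of $X$. Since $X$ is smooth, there are no such points, so the base locus is empty and the series is basepoint free.

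The main obstacle, and the step deserving the most care, is the bookkeeping that matches the coefficients $c_I$ (viewed as functions of the $(k+1)$-plane parameters $a_{k+1},\dots,a_n$) against the partial derivatives $\partial f/\partial x_j$ (viewed as functions of the point $q \in \Gamma$), since these two roles of the variables $x_{k+1}, \dots, x_n$ must be carefully disentangled. Once that identification is set up correctly, the conclusion that a base point forces a singularity of $X$ is immediate from the Jacobian criterion, and smoothness of $X$ finishes the argument.
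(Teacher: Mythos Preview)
Your proof is correct, and both you and the paper handle the linear-series assertion by citing the preceding coordinate discussion. For basepoint freeness, however, the routes differ. The paper argues geometrically: any point of $Y_\Phi \cap \Gamma$ is a singular point of the reducible hypersurface $Y_\Phi \cup \Gamma = X \cap \Phi$, so a base point $q$ would be singular on $X \cap \Phi$ for \emph{every} $(k+1)$-plane $\Phi$ through $\Gamma$, which is impossible when $X$ is smooth (one can always choose $\Phi \not\subset T_qX$). You instead compute directly: unwinding the bilinear form $\sum_{|I|=d-1} c_I(a)\,x^I(q)$ and writing each linear $c_I = \sum_{j>k} \alpha_{I,j}\,x_j$, the equation of $Y_\Phi \cap \Gamma$ at $q$ becomes $\sum_{j>k} a_j\,(\partial f/\partial x_j)(q)$, so its vanishing for all $a$ forces $\partial f/\partial x_j(q)=0$ for $j>k$; combined with the automatic vanishing of the tangential partials along $\Gamma$, this makes $q$ a singular point of $X$ itself. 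Your bookkeeping identifying the $c_I$ with the normal derivatives is exactly right and is, as you say, the only point requiring care. The paper's argument is shorter and coordinate-free; yours is more explicit and reaches the stronger conclusion (singularity of $X$, not merely of every slice $X\cap\Phi$) in one step.
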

\begin{proof}
The fact that these form a linear series follows from the above discussion. The only remaining thing to show is that this linear series is basepoint free. The points of $Z_{\Phi} = \Gamma \cap Y_{\Phi}$ will be singular points of $Y_{\Phi} \cup \Gamma = X \cap \Phi$. Thus, a basepoint of the linear series will be a singular point of $X \cap \Phi$ for all $\Phi$. This is impossible for smooth $X$.
\end{proof}

The basic inductive step in the argument is the following.

\begin{theorem}
\label{thm-unirationality}
Let $(\PP(E), \Gamma_b, X_b)$ be a family of smooth, $n-1$-dimensional, degree $d$, $k$-planed hypersurfaces over $B$, with $\xX$ the total space of the family. Then if $k \geq 1+2\binom{d+r-2}{d-2}+\binom{d+r-1}{r}$, there is a family of smooth, degree $d-1$, $r$-planed hypersurfaces $(\PP(E'), \Lambda_b, Y_b)$ over a base $B'$ having total space $\yY$, together with a surjective map $\beta: B' \to B$ such that the following diagram commutes
\catcode`\@=10
\newdimen\cdsep
\cdsep=3em

\def\cdstrut{\vrule height .25\cdsep width 0pt depth .12\cdsep}
\def\@cdstrut{{\advance\cdsep by 2em\cdstrut}}

\def\arrow#1#2{
  \ifx d#1
    \llap{$\scriptstyle#2$}\left\downarrow\cdstrut\right.\@cdstrut\fi
  \ifx u#1
    \llap{$\scriptstyle#2$}\left\uparrow\cdstrut\right.\@cdstrut\fi
  \ifx r#1
    \mathop{\hbox to \cdsep{\rightarrowfill}}\limits^{#2}\fi
  \ifx l#1
    \mathop{\hbox to \cdsep{\leftarrowfill}}\limits^{#2}\fi
}
\catcode`\@=10

\cdsep=3em
$$
\begin{matrix}
 \yY & \arrow{r}{\alpha} & \xX \cr
 \arrow{d}{} & & \arrow{d}{\pi} \cr
 B' & \arrow{r}{\beta} & B \cr
\end{matrix}
$$
and $\alpha: \yY \to \xX$ dominates each fiber $X_b$ of $\pi$. Moreover, if $B$ is rational, then $B'$ is rational as well.
\end{theorem}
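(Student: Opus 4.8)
The theorem is the inductive engine of the Harris--Mazur--Pandharipande unirationality argument, and the key geometric input I will use is the residual-hypersurface construction already set up, together with Corollary \ref{cor-linearSeries}. The goal is, starting from a family of smooth degree-$d$ hypersurfaces each containing a $k$-plane, to produce a family of smooth degree-$(d-1)$ hypersurfaces each containing an $r$-plane, in such a way that the new total space $\yY$ dominates each fiber $X_b$ and so that rationality of the base is preserved. First I would describe the natural parameter space: over each point $b \in B$, the $(k+1)$-planes $\Phi$ containing $\Gamma_b$ but not contained in $X_b$ form (an open subset of) the projectivization $\PP(E/F)$, where $F \subset E$ is the rank-$(k+1)$ subbundle cutting out $\Gamma_b$. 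I would take $B'$ to be this $\PP^{n-k-1}$-bundle over $B$ (restricted to the locus where $\Phi \not\subset X_b$), with $\beta : B' \to B$ the projection; since this is a projective bundle over $B$, if $B$ is rational then $B'$ is rational as well, which disposes of the final sentence immediately.

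\emph{Constructing the residual family.} Over $B'$ I would form the universal residual hypersurface: for each $(b,\Phi)$ the intersection $X_b \cap \Phi$ is $\Gamma_b \cup Y_\Phi$ with $Y_\Phi$ of degree $d-1$, and the $Y_\Phi$ fit together into a family $(\PP(E'), \dots, Y_{b'})$ of degree-$(d-1)$ hypersurfaces over $B'$, where $E'$ is the rank-$(k+2)$ bundle corresponding to the varying $(k+1)$-planes $\Phi$. The total space $\yY$ comes with a map $\alpha : \yY \to \xX$ sending a point of $Y_\Phi$ to the corresponding point of $X_b$. The dominance of each fiber $X_b$ is the crux of the HMP construction: as $\Phi$ ranges over all $(k+1)$-planes containing $\Gamma_b$, the residual hypersurfaces $Y_\Phi$ sweep out $X_b$ (a general point $x \in X_b$ lies on the residual part of its span with $\Gamma_b$), so $\alpha$ dominates $X_b$. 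I would verify this by the dimension/incidence count that a general $x \in X_b$ determines $\Phi = \langle \Gamma_b, x\rangle$ and lies on $Y_\Phi$.

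\emph{Smoothness and the $r$-plane.} The two substantive points are (i) that a general member $Y_{b'}$ of the residual family is smooth, and (ii) that these smooth residual hypersurfaces contain an $r$-plane, so that after restricting $B'$ to an appropriate open locus we genuinely obtain a family of \emph{smooth $r$-planed} hypersurfaces. For smoothness I would apply the Bertini-type Lemma \ref{lem-Bertini} to the linear series of residual hypersurfaces, controlling the dimension of the bad locus where $Y_\Phi$ fails to be smooth, and discard that locus from $B'$ (shrinking $B'$ to a dense open subset does not affect rationality or the dominance statement). To produce the $r$-plane inside $Y_{b'}$, I would invoke the hypothesis $k \geq 1 + 2\binom{d+r-2}{d-2} + \binom{d+r-1}{r}$: the restriction $Y_\Phi \cap \Gamma_b$ is a degree-$(d-1)$ hypersurface in the $k$-plane $\Gamma_b \cong \PP^k$, and by the $k$-plane existence results (Corollary \ref{cor-expDimkplanes} applied in $\PP^k$ to degree $d-1$, with the stated numerical bound guaranteeing the Fano scheme $F_r$ is nonempty of expected dimension) such a hypersurface contains an $r$-plane $\Lambda_{b'}$; since $Y_\Phi \cap \Gamma_b \subset Y_\Phi$, this $\Lambda_{b'}$ lies in $Y_{b'}$. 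By Corollary \ref{cor-linearSeries} the linear series $\{Y_\Phi \cap \Gamma_b\}$ is basepoint free, which is what lets me apply Lemma \ref{lem-Bertini} and keep the $r$-planes varying algebraically in a subbundle $F' \subset E'$.

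\emph{Main obstacle.} I expect the delicate step to be organizing everything \emph{in families} over $B'$ rather than fiberwise: the $r$-planes $\Lambda_{b'}$ must be chosen to vary algebraically so that $\PP(F') \subset \PP(E')$ is a genuine subbundle, and the open loci where smoothness holds and where the $(k+1)$-plane is not contained in $X_b$ must be intersected and shown to still dominate $B$ and each fiber. This requires passing to a suitable finite cover or dense open subset of $B'$ (which is where the surjectivity of $\beta$, as opposed to merely dominance, needs care) while keeping the rationality of $B'$ intact. The numerical hypothesis on $k$ is exactly calibrated so that the relevant Fano scheme of $r$-planes in a degree-$(d-1)$ hypersurface in $\PP^k$ is nonempty and well-behaved; marshalling Lemma \ref{lem-Bertini} and Corollary \ref{cor-expDimkplanes} together to extract a \emph{single} algebraic family of such $r$-planes is the technical heart of the argument.
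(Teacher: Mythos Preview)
Your overall architecture matches the paper's, but your choice of $B'$ is wrong, and the obstacle you flag at the end is a genuine gap that your setup cannot close. You take $B'$ to be (an open subset of) the $\PP^{n-k-1}$-bundle $\PP(E/F)$ of $(k+1)$-planes $\Phi$ through $\Gamma_b$, and then hope to \emph{choose} an $r$-plane $\Lambda_{b'} \subset Y_\Phi$ varying algebraically in a subbundle $F'\subset E'$. There is no reason the relative Fano scheme $F_r(Y_\Phi\cap\Gamma_b)\to B'$ admits a rational section; passing to a finite cover to trivialize it, as you suggest, will in general destroy rationality of $B'$. So the ``main obstacle'' is not a technicality but a real failure of the construction as you have set it up.

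The paper resolves this by building $\Lambda$ into the base. It forms the incidence correspondence
\[
T=\{(\Lambda,\Phi,b)\ :\ \Lambda\subset Z_{\Phi,b}=Y_{\Phi,b}\cap\Gamma_b\}
\]
and then \emph{projects the other way}, via $\pi_1:T\to\gG(r,\Gamma)$ rather than $\pi_2:T\to\PP(E/F)$. Over a fixed $\Lambda$ the condition $\Lambda\subset Z_{\Phi,b}$ is linear in $\Phi$, precisely because (Corollary~\ref{cor-linearSeries}) the $Z_{\Phi,b}$ form a linear series on $\Gamma_b$; hence on the open locus $B_0\subset\gG(r,\Gamma)$ where the fibers of $\pi_1$ have the minimal dimension, $T_0=\pi_1^{-1}(B_0)$ is $\PP(V)$ for a vector bundle $V$. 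Then $B'=B_1$ is the open subset of $\PP(V)$ where $Y_{\Phi,b}$ is smooth. Rationality follows because $B_1$ is open in a projective bundle over an open in a Grassmannian bundle over $B$. Thus the linear-series statement is not merely a Bertini input; it is what makes the base $B'$ rational once $\Lambda$ is part of the data.

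Two further points. First, you invoke Corollary~\ref{cor-expDimkplanes} (expected dimension) for the $r$-planes in $Z_{\Phi,b}$, but the paper needs Theorem~\ref{thm-irrkplanes} (irreducibility) to show each $T_b$ is irreducible via $\pi_{2,b}$, combined with Lemma~\ref{lem-Bertini} to control the stratification of $\PP^{n-k-1}$ by the singularity dimension of $Z_{\Phi,b}$; this is exactly where the hypothesis $k\ge 1+2\binom{d+r-2}{d-2}+\binom{d+r-1}{r}$ enters. Second, surjectivity of $\beta:B'\to B$ is argued by showing $\pi_{1,b}:T_b\to\gG(r,\Gamma_b)$ is dominant for every $b$ (a dimension count using $n-1\ge 2k$ since $X_b$ is smooth and contains a $k$-plane), so $B_0$ and then $B_1$ still surject onto $B$; your sketch, with $B'\subset\PP(E/F)$, gets this surjectivity for free but at the cost of the unresolved section problem above.
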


\begin{proof}
Let $\Gamma$ be given by $\PP(F)$ for a subbundle $F \subset E$. Then the space of pairs $(\Phi,b)$ where $b \in B$ and $\Phi$ is a $k+1$-plane in $\PP(E)_b$ containing $\Gamma_b$ is parameterized by $\PP(E/F)$. For each $(\Phi,b)$, we have the residual hypersurface $Y_{\Phi,b}$, and the intersection $Z_{\Phi,b} = Y_{\Phi,b} \cap \Gamma_b$. By Corollary \ref{cor-linearSeries}, the hypersurfaces $Z_{\Phi,b}$ form a basepoint free linear series on $\Gamma_b$ for each $b$.

Now consider the relative Grassmannian $\gG(r,\Gamma)$ consisting of pairs $(b,\Lambda)$ such that $\Lambda \subset \Gamma_b$ is an $r$-plane. From this, we can form the incidence-correspondence 
\[ T = \{ (\Lambda, \Phi, b) | \: \Phi \in \PP(E/F)_b, \: \Lambda \subset Z_{\Phi,b}, \: \Lambda \textrm{ is an $r$-plane} \}. \] 
We check that the fiber of $T$ over any point $b \in B$ is irreducible. We do this by considering the projection $\pi_{2,b}: T_b \to \PP(E/F)_b = \PP^{n-k-1}$. The fiber of $\pi_{2,b}$ over a point $\Phi \in \PP^{n-k-1}$ will simply be the space of $r$-planes contained in $Z_{\Phi,b}$. By Lemma \ref{lem-Bertini}, we see that a general $Z_{\Phi,b}$ will be smooth, and that the locus of $Z_{\Phi,b}$ that are singular in dimension $s$ will have codimension at least $s+1$ in $\PP^{n-k-1}$. Thus, since $k \geq 1+2\binom{d+r-2}{d-2}+\binom{d+r-1}{d-1}$, it follows by Theorem \ref{thm-irrkplanes} that the fibers of $\pi_{2,b}$ will have dimension $(r+1)(k-r)-\binom{d+r-1}{r}$ outside a set of codimension at least $\binom{d+r-1}{d-1}$, and will always have dimension at most $(r+1)(k-r)$. Thus, the incidence correspondence $T_b$ is irreducible of dimension $n-k-1+(r+1)(k-r)-\binom{d+r-1}{d-1}$. This implies that the total space $T$ is also irreducible.

We now claim the projection $\pi_{1,b}$ from $T_b$ to $\gG(r,\Gamma_b)$ is dominant. It is $\binom{d+r-1}{r}$ conditions for a degree $d-1$ hypersurface to contain an $r$-plane, so $\pi_{1,b}$ will be dominant if $n -k-1 > \binom{d+r-1}{r}$. Since each $X_b$ is smooth and contains the $k$-plane $\Gamma_b$, it follows that $n-1 \geq 2k$. From the condition $k \geq 1+2\binom{d+r-2}{d-2}+\binom{d+r-1}{d-1}$, it follows that $n-k-1 > \binom{d+r-1}{d-1}$, as required.

Let $B_0 \subset \gG(r,\Gamma)$ be the open locus over which the fibers of $\pi_1:T \to \gG(r,\Gamma)$ have the minimal dimension $n-k-1-\binom{d+r-1}{d-1}$, and let $T_0 = \pi_1^{-1}(B_0)$. We know that $B_0$ surjects onto $B$ since $\pi_{1,b}$ is dominant for all $b \in B$. Because the $Z_{\Phi,b}$ form a linear series for each $b \in B$, we have a vector bundle $V$ over $B_0$ such that $T_0 = \PP(V)$. Let $B_1$ be the open locus in $\PP(V)$ over which $Y_{\Phi,b}$ is smooth. Then $B_1$ still surjects onto $B$, since for any $b \in B$ and a general $\Phi \in \PP^{n-k-1}_b$, we have that $Z_{\Phi,b}$ will be smooth, and since $Y_{\Phi,b}$ will be smooth away from the base locus of the family, we see that $Y_{\Phi,b}$ will be as well. Thus, we can take $\yY$ to be the universal point on the family of degree $d-1$ hypersurfaces $B_1$. We can picture the above construction in the following diagram.

\def\cdstrut{\vrule height .25\cdsep width 0pt depth .12\cdsep}
\def\@cdstrut{{\advance\cdsep by 2em\cdstrut}}

\def\arrow#1#2{
  \ifx d#1
    \llap{$\scriptstyle#2$}\left\downarrow\cdstrut\right.\@cdstrut\fi
  \ifx u#1
    \llap{$\scriptstyle#2$}\left\uparrow\cdstrut\right.\@cdstrut\fi
  \ifx r#1
    \mathop{\hbox to \cdsep{\rightarrowfill}}\limits^{#2}\fi
  \ifx l#1
    \mathop{\hbox to \cdsep{\leftarrowfill}}\limits^{#2}\fi
}
\catcode`\@=10

\cdsep=3em
$$
\begin{matrix}
 \yY &=& \{ (p,b,\Phi,\Lambda) | \: p \in Y_{\Phi} \} \cr
 \arrow{d}{} & \cr
 B_1 & \subset & \PP(V) = T_0 & \subset & T = \{(b,\Phi,\Lambda) | \: \Lambda_b \subset Z_{\Phi}  \} & \arrow{r}{} & \PP(E/F) = \{(b,\Phi) \} \cr
 & & \arrow{d}{} & & \arrow{d}{} & & \arrow{d}{} \cr
 & & B_0 & \subset & \gG(r, \Gamma) = \{ \Lambda \} & \arrow{r}{} & B = \{b \}
\end{matrix}
$$

We see that $\yY$ is naturally a family of hypersurfaces in the family of projective spaces $\PP(E')$, where $E'$ is the pullback of the universal subbundle on $\PP(E/F)$, viewed as a subvariety of $\gG(k+1,E)$. Pulling back the universal subbundle from $\gG(r,\Gamma)$, we obtain a family of $r$-planes in $\yY$ as desired. Since a general point $p$ of $X_b$ will lie in a plane $\Phi$ containing an $r$-plane $\Lambda$ with $(p,b,\Phi,\Lambda) \in \yY$, we see that $\yY \to \xX$ will be fiberwise dominant.

It remains to see that $B_1$ is rational if $B$ is. We know that $B_0$ is rational, since it is an open set in a Grassmannian bundle over $B$. Thus, $B_1$ will be as well, since it is an open set in a projective bundle over a rational base.
\end{proof}

\begin{corollary}
Let $k_0(d)$ be defined by $k_0(2) = 0$ and
\[ k_0(d) = 1+2\binom{k_0(d-1)+d-2}{d-2}+\binom{k_0(d-1)+d-1}{d-1} . \]
Let 
\[ n_0(d) = \left\lceil \frac{1}{k_0(d)+1} \binom{k_0(d)+d}{d} \right \rceil + k_0(d) .\]
Then any family of smooth $k_0(d)$-planed degree $d$ hypersurfaces in $\PP(E)$ over a rational base $B$ is unirational. In particular, any smooth degree $d$ hypersurface in $\PP^n$ is unirational provided that $n \geq n_0(d)$.
\end{corollary}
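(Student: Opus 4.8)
The plan is to prove the family statement by induction on the degree $d$, with Theorem~\ref{thm-unirationality} furnishing the inductive step and pointed quadrics serving as the base case; the assertion for an individual hypersurface then follows by specializing the base $B$ to a point. For the base case $d=2$ note that $k_0(2)=0$, so a family of smooth $0$-planed quadrics over a rational base $B$ comes equipped with a section $\Gamma$ meeting each smooth fiber $X_b$ in a single point. Projection from this section identifies the total space $\xX$ birationally with a projective bundle over $B$, so $\xX$ is rational, hence unirational.

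For the inductive step, suppose the family statement holds at degree $d-1$ for the planed dimension $k_0(d-1)$, and let $(\PP(E),\Gamma_b,X_b)$ be a family of smooth $k_0(d)$-planed degree $d$ hypersurfaces over a rational base $B$. I would apply Theorem~\ref{thm-unirationality} with $r = k_0(d-1)$: its hypothesis $k \geq 1 + 2\binom{d+r-2}{d-2} + \binom{d+r-1}{r}$ becomes, after substituting $k = k_0(d)$ and using $\binom{d+r-1}{r} = \binom{d+r-1}{d-1}$, exactly the defining recursion for $k_0(d)$, and so holds with equality. The theorem then produces a family $(\PP(E'),\Lambda_b,Y_b)$ of smooth $k_0(d-1)$-planed degree $d-1$ hypersurfaces over a base $B'$ that is rational because $B$ is, together with a map $\alpha\colon \yY \to \xX$ dominating every fiber $X_b$ and a surjection $\beta\colon B' \to B$. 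The inductive hypothesis makes $\yY$ unirational, and since $\alpha$ dominates each fiber while $\beta$ is surjective, $\alpha$ is dominant; composing a dominant rational map from projective space onto $\yY$ with $\alpha$ then shows $\xX$ is unirational.

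Finally, to deduce unirationality of an arbitrary smooth $X \subset \PP^n$ of degree $d$ with $n \geq n_0(d)$, set $k = k_0(d)$. The definition $n_0(d) = \lceil \frac{1}{k+1}\binom{k+d}{d}\rceil + k$ gives $(k+1)(n-k) \geq \binom{d+k}{k}$, so the rank $\binom{d+k}{k}$ of the bundle $\mathrm{Sym}^d \mathcal{S}^\vee$ on $\gG(k,n)$ is at most $\dim \gG(k,n)$. Since $F_k(X)$ is the zero scheme of a section of this bundle and the top Chern class of $\mathrm{Sym}^d \mathcal{S}^\vee$ is nonzero, $F_k(X)$ is nonempty (a nowhere-vanishing section would force that Chern class to vanish), so $X$ contains a $k$-plane $\Gamma$. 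Viewing $(X,\Gamma)$ as a $k_0(d)$-planed hypersurface over the rational base $B = \Spec \mathbb{C}$ and invoking the family statement gives that $X$ is unirational.

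I expect the inductive step to be the main point: the content is entirely in verifying that the recursion defining $k_0(d)$ is tuned precisely so that the planed dimension available at degree $d$ meets the numerical hypothesis of Theorem~\ref{thm-unirationality} for producing $k_0(d-1)$-planed residual hypersurfaces, and in confirming that rationality of the base and smoothness of the fibers are preserved all the way down to the quadric case. The one subtlety outside this induction is the nonemptiness of $F_{k_0(d)}(X)$ for a possibly special smooth $X$, which I would obtain from the Euler-class argument above rather than from any genericity statement, since Corollary~\ref{cor-expDimkplanes} only controls the dimension of $F_{k_0(d)}(X)$ and not its nonemptiness.
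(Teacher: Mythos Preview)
Your proof is correct and follows essentially the same approach as the paper: induction on $d$ with Theorem~\ref{thm-unirationality} supplying the inductive step, the quadric case as the base, and the nonemptiness of $F_{k_0(d)}(X)$ via $(k+1)(n-k) \geq \binom{d+k}{k}$ for the individual-hypersurface statement. Your write-up is in fact more careful than the paper's on two points: you spell out why the total space in the quadric base case is rational (projection from the section), and you justify the nonemptiness of $F_k(X)$ via the top Chern class of $\mathrm{Sym}^d\mathcal{S}^\vee$, whereas the paper simply asserts both facts.
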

\begin{proof}
We prove the result by induction on $d$. For $d=2$, the result is trivial: any smooth quadric containing a point is rational, and hence, unirational. Now suppose the result is known for $d-1$. Then by Theorem \ref{thm-unirationality}, we can construct a dominating family of $k_0(d-1)$-planed degree $d-1$ hypersurfaces that dominate the family of degree $d$ hypersurfaces. The result follows.

The second part follows from the fact that if $(k+1)(n-k) \geq \binom{k+d}{d}$, then any hypersurface of degree $d$ in $\PP^n$ contains a $k$-plane.
\end{proof}

The number $n_0(d)$ is quite large. However, this is much smaller than the bound in \cite{HMP}, which grows like a $d$-fold iteration of this exponential. We now prove upper bounds on the growth of $n_0(d)$. First we need a lemma.

\begin{lemma}
\label{lem-binomPolys}
If $d \geq 5$ and $x \geq 6$, then $\binom{x+d}{d} < \frac{1}{4} x^d$.
\end{lemma}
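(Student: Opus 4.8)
The plan is to prove the inequality by induction on $d$, holding the range $x \geq 6$ fixed throughout. The key structural fact is the recursion
\[ \binom{x+d+1}{d+1} = \frac{x+d+1}{d+1}\binom{x+d}{d}, \]
which lets us pass from $d$ to $d+1$ by multiplying by the single factor $\frac{x+d+1}{d+1}$. The whole argument then reduces to checking that this growth factor is smaller than $x$, so that the right-hand side $\frac14 x^d$ grows by at least as much as the binomial coefficient does.

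For the base case $d = 5$, I would rewrite the desired inequality $\binom{x+5}{5} < \frac14 x^5$ as
\[ \prod_{i=1}^{5}\Bigl(1 + \frac{i}{x}\Bigr) < 30, \]
after clearing the factor $5! = 120$ and dividing by $x^5$. Each factor $1 + i/x$ is decreasing in $x$, so the product is maximized over $x \geq 6$ at the endpoint $x = 6$, where it equals $\frac{7\cdot 8\cdot 9\cdot 10\cdot 11}{6^5} = \frac{55440}{7776} \approx 7.13$. Since this is comfortably below $30$, the base case holds for all $x \geq 6$. This monotonicity observation is what turns an inequality in two variables into a single numerical check.

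For the inductive step, assume the statement holds for some $d \geq 5$ and all $x \geq 6$. Applying the recursion and the inductive hypothesis at the same value of $x$ gives
\[ \binom{x+d+1}{d+1} = \frac{x+d+1}{d+1}\binom{x+d}{d} < \frac{x+d+1}{d+1}\cdot\frac14 x^d. \]
It then suffices to show $\frac{x+d+1}{d+1} < x$, equivalently $d+1 < xd$. This is immediate since $xd \geq 6d > d+1$ for $d \geq 1$, and in particular for $d \geq 5$ and $x \geq 6$. Combining the two displays yields $\binom{x+d+1}{d+1} < \frac14 x^{d+1}$, completing the induction.

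I do not anticipate a genuine obstacle here: the content is entirely in spotting the multiplicative recursion and the monotonicity of $\prod(1+i/x)$ in $x$. The one place to be careful is ensuring the inductive hypothesis is invoked at the \emph{same} $x$ as the conclusion (which it is, since the recursion does not change $x$), and confirming the trivial inequality $d+1 < xd$ holds across the full stated range rather than just asymptotically.
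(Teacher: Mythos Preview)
Your proof is correct. The base case check at $x=6$ is accurate, the monotonicity in $x$ is valid since each factor $1+i/x$ is decreasing, and the inductive step is clean: the recursion $\binom{x+d+1}{d+1}=\frac{x+d+1}{d+1}\binom{x+d}{d}$ together with $\frac{x+d+1}{d+1}<x$ (equivalently $d+1<xd$, immediate for $x\geq 6$, $d\geq 1$) closes the induction.

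The paper takes a different, non-inductive route: it writes
\[
\binom{x+d}{d}=\prod_{i=1}^{d}\frac{x+i}{i}=x^{d}\prod_{i=1}^{d}\Bigl(\tfrac{1}{x}+\tfrac{1}{i}\Bigr)
\]
and bounds each factor $\frac{1}{x}+\frac{1}{i}$ directly: for $x\geq 6$ the $i=1$ term is at most $\tfrac{7}{6}$, the $i=2$ term at most $\tfrac{2}{3}$, and every subsequent term at most $\tfrac{1}{2}$, so for $d\geq 5$ the product is at most $\tfrac{7}{6}\cdot\tfrac{2}{3}\cdot(\tfrac{1}{2})^{3}=\tfrac{7}{72}<\tfrac{1}{4}$. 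This handles all $d\geq 5$ simultaneously with a single estimate and avoids any separate base-case computation. Your approach, by contrast, isolates the base case $d=5$ (using essentially the same factor-by-factor monotonicity idea, but in the form $\prod(1+i/x)$) and then propagates via the multiplicative recursion. Both arguments are elementary; the paper's is marginally shorter, while yours makes the dependence on $d$ explicit through the recursion and could be adapted more readily if one wanted sharper constants for specific $d$.
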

\begin{proof}
We have
\[ \binom{x+d}{d} = \prod_{i=1}^d \frac{x+i}{i} = x^d \prod_{i=1}^d \left(\frac{1}{x}+\frac{1}{i} \right) .\]
Since $x \geq 6$, the $i=1$ term of the product is at most $\frac{7}{6}$, the $i=2$ term is at most $\frac{2}{3}$, and the subsequent terms are at most $\frac{1}{2}$. Since $d \geq 5$, there are at least $5$ terms, which means the total product will be at most $\frac{1}{4} x^d$.
\end{proof}

This gives us the desired upper bound.

\begin{corollary}
\label{cor-unirationalityBound}
We have $k_0(2) = 0$, $k_0(3) = 4$, $k_0(4) = 66$, and $k_0(d) \leq 2^{(d-1)!}$ for $d \geq 5$. Furthermore,  $n_0(d) \leq 2^{d!}$ for all $d$.
\end{corollary}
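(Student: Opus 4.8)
The plan is to establish the three exact small-degree values by direct computation from the recursion, and then prove the asymptotic bound $k_0(d) \leq 2^{(d-1)!}$ for $d \geq 5$ by induction, with the final claim $n_0(d) \leq 2^{d!}$ following by bounding the binomial coefficient appearing in the definition of $n_0$.

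First I would compute the base cases explicitly. We are given $k_0(2) = 0$. Plugging into the recursion, $k_0(3) = 1 + 2\binom{0+1}{1} + \binom{0+2}{2} = 1 + 2 \cdot 1 + 1 = 4$, and then $k_0(4) = 1 + 2\binom{4+2}{2} + \binom{4+3}{3} = 1 + 2 \cdot 15 + 35 = 66$. These are routine arithmetic. For $d = 5$ one checks directly that $k_0(5) = 1 + 2\binom{66+3}{3} + \binom{66+4}{4}$, which is a large but finite number that I would verify is at most $2^{4!} = 2^{24}$, anchoring the induction.

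For the inductive step on $k_0(d) \leq 2^{(d-1)!}$ with $d \geq 6$, I would assume $k_0(d-1) \leq 2^{(d-2)!}$ and bound
\[ k_0(d) = 1 + 2\binom{k_0(d-1)+d-2}{d-2} + \binom{k_0(d-1)+d-1}{d-1} . \]
Writing $x = k_0(d-1)$, both binomial terms are dominated by $\binom{x+d-1}{d-1}$, so $k_0(d)$ is bounded by a small constant times $\binom{x+d-1}{d-1}$. Here I would invoke Lemma \ref{lem-binomPolys}: since $x = k_0(d-1) \geq 6$ and $d - 1 \geq 5$, we have $\binom{x+d-1}{d-1} < \frac{1}{4}x^{d-1}$, so $k_0(d) \leq x^{d-1} = k_0(d-1)^{d-1}$. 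Applying the inductive hypothesis, $k_0(d) \leq \left(2^{(d-2)!}\right)^{d-1} = 2^{(d-1)(d-2)!} = 2^{(d-1)!}$, closing the induction. The main obstacle is making sure the crude consolidation of the two binomial terms into a single constant multiple of $\binom{x+d-1}{d-1}$ really does stay below the slack $\frac{1}{4}x^{d-1}$ provided by Lemma \ref{lem-binomPolys}; this requires checking the constant factor absorbs the leading $+1$ and the factor $2$, which holds comfortably once $x$ is large, so I would confirm the threshold $x \geq 6$ is met for all $d \geq 6$ (it is, since $k_0$ is increasing and $k_0(5) \gg 6$).

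Finally, for $n_0(d) \leq 2^{d!}$, I would unwind the definition
\[ n_0(d) = \left\lceil \frac{1}{k_0(d)+1}\binom{k_0(d)+d}{d}\right\rceil + k_0(d) . \]
Using Lemma \ref{lem-binomPolys} once more with $x = k_0(d)$ gives $\binom{k_0(d)+d}{d} < \frac{1}{4}k_0(d)^d$, so the fractional term is less than $\frac{1}{4}k_0(d)^{d-1}$, and hence $n_0(d)$ is bounded by roughly $k_0(d)^{d-1} + k_0(d) \leq k_0(d)^d$. Substituting $k_0(d) \leq 2^{(d-1)!}$ yields $n_0(d) \leq 2^{d \cdot (d-1)!} = 2^{d!}$, as desired. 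The only care needed is to dispose of the small cases $d = 2, 3, 4$ by direct evaluation of $n_0$ (where Lemma \ref{lem-binomPolys} does not apply), confirming each is below the stated bound by hand.
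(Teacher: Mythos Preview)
Your proposal is correct and follows essentially the same approach as the paper: both compute the small values directly, verify $k_0(5) \leq 2^{4!}$ as the base of the induction, consolidate the recursion into a bound of the form $4\binom{k_0(d-1)+d-1}{d-1}$, apply Lemma~\ref{lem-binomPolys} to get $k_0(d) \leq k_0(d-1)^{d-1}$, and then handle $n_0(d)$ by the same lemma together with a direct check of the low-degree cases.
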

\begin{proof}
We start by computing $k_0(2)=0$, $k_0(3) = 4$, $k_0(4) = 66$, and $k_0(5) = 1021684$. Note that $2^{4!} = 16777216$, which shows the base case $d=5$. We now prove the result by induction on $d$. Suppose $k_0(d-1) \leq 2^{(d-2)!}$. Then for $d \geq 6$
\[ k_0(d) = 1 + 2\binom{k_0(d-1)+d-2}{d-2}+\binom{k_0(d-1)+d-1}{d-1} < 4 \binom{k_0(d-1)+d-1}{d-1}.\]
By Lemma \ref{lem-binomPolys}, this last is at most
\[  k_0(d-1)^{d-1} \leq 2^{(d-2)! \cdot (d-1)} = 2^{(d-1)!} .\]
This completes the proof of the first statement. The bound for $n_0(d)$ follows from Lemma \ref{lem-binomPolys} and the definition of $n_0(d)$, with the case $d=4$ needing to be checked separately.
\end{proof}

\section{Higher degree curves}
\label{sec-higherdegree}

In this section we consider higher degree rational curves. Using our results on lines, we can prove that the space of degree $e$ rational curves on every smooth hypersurface of low degree has the expected dimension as long as $n$ is large relative to $e$. 

To compactify the space of smooth rational curves of degree $e$ on $X$, we consider the moduli space of stable maps. Let $\overline{M}_{0,r}(X,e)$ denote the Kontsevich moduli scheme parametrizing 
pointed maps $(C,p_1,\dots,p_r,f)$ where $C$ is a projective, connected, nodal curve of genus 0, $p_1, 
\dots, p_r$ are non-singular distinct points of $C$, and $f: C \to X$ is a stable morphism of total degree $e$. The 
space $\overline{M}_{0,0}(X,e)$ contains as an open subscheme the space parametrizing smooth rational curves of degree $e$ on $X$. By \cite[Theorem II.1.7]{kollar} the dimension of every irreducible component of 
$\overline{M}_{0,0}(X,e)$ is at least  $e(n+1-d)+n-4$, and if $d \leq n-1$, then $\overline{M}_{0,0}(X,e)$ has at least one irreducible component of dimension $e(n+1-d)+n-4$. We refer to the number  $e(n+1-d)+n-4$ as the {\em{expected dimension}} of $\overline{M}_{0,0}(X,e)$.
(See 
\cite{harris-roth-starr} and \cite{fulton-pandharipande} for a detailed study of these moduli spaces.)
 
In this section, we show the following:

\begin{theorem}\label{higherdeg}
Let $X$ be a smooth degree $d$ hypersurface in $\PP^n$.  Then $\overline{M}_{0,0}(X,e)$ is irreducible of the expected dimension provided that $d \leq \frac{e+n}{e+1}$. 
\end{theorem}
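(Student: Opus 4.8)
The plan is to argue by induction on $e$, carrying along two statements at once: that $\overline{M}_{0,0}(X,e)$ is irreducible of the expected dimension $e(n+1-d)+n-4$, and that for a general point $p \in X$ the fiber $\ev^{-1}(p) \subset \overline{M}_{0,1}(X,e)$ is irreducible of the expected dimension. The base case $e=1$ is exactly Corollary \ref{cor-irrLines} together with Corollary \ref{cor-irrLineFibers}, since the hypothesis $d \leq \frac{e+n}{e+1}$ specializes to $n \geq 2d-1$ when $e=1$. Because $\frac{e+n}{e+1} = 1 + \frac{n-1}{e+1}$ is decreasing in $e$, the hypothesis for a given $e$ implies it for every smaller $e$, so all inductive hypotheses are available.

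The first and main task is to show that every component of $\overline{M}_{0,0}(X,e)$ has the expected dimension. Since $\overline{M}_{0,0}(X,e)$ is proper, any positive-dimensional component $M$ meets the boundary $\Delta$ of maps with reducible or contracted domain (bend and break), and as $\Delta$ is a divisor we get $\dim M \leq \dim(M \cap \Delta) + 1 \leq \dim \Delta + 1$. It therefore suffices to bound each boundary stratum. On the strata with no contracted component, a map is obtained by gluing a degree $e_1$ and a degree $e_2$ map ($e_1 + e_2 = e$) along a point; since the general member of each factor is free (by the inductive expected-dimension statement and Proposition \ref{prop-globGenGeneralPoint}), the evaluation maps are dominant and such a stratum is dominated by $\overline{M}_{0,1}(X,e_1) \times_X \overline{M}_{0,1}(X,e_2)$, of dimension $e(n+1-d)+n-5$, one less than expected.

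The dangerous strata are those carrying a contracted component, of which the extreme case is a ``star'': a contracted $\PP^1$ mapping to a point $p$ with several low-degree curves (in the worst case, $e$ lines) attached. Through a general $p$ the lines sweep out an expected $(n-1-d)$-dimensional family, and a direct count then shows these strata lie strictly below the expected dimension; the real content is controlling lines through special points. This is precisely where the hypothesis on $d$ and the earlier analysis of lines enter: Theorem \ref{thm-nonfreelines} bounds the dimension of any family of non-free lines in terms of the subvariety it sweeps out, and the bound $d \leq \frac{e+n}{e+1}$ is calibrated (compare the discussion following Proposition \ref{prop-dJBoptimal}) so that even after attaching the maximal number of such lines to a contracted component, the resulting stratum cannot reach the expected dimension. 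Establishing this bound on the star-type strata is the principal obstacle; it is the step that genuinely uses that $X$ is an \emph{arbitrary} smooth hypersurface together with the line estimates of Section \ref{sec-caseoflines}.

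With expected-dimensionality in hand, irreducibility follows by the standard boundary-smoothing argument. Consider the boundary divisor $D$ obtained by attaching a line, namely the image of the gluing map $\overline{M}_{0,1}(X,e-1) \times_X \overline{M}_{0,1}(X,1) \to \overline{M}_{0,0}(X,e)$. By induction $\overline{M}_{0,1}(X,e-1)$ is irreducible with irreducible general evaluation fibers, and $\overline{M}_{0,1}(X,1)$ is irreducible, so the fiber product, and hence $D$, is irreducible of dimension $e(n+1-d)+n-5$. A general member of any component $M$ of $\overline{M}_{0,0}(X,e)$ is a free map, which degenerates into the boundary and lies in the closure of $D$; conversely, every comb obtained from a free degree $e-1$ curve with a free line attached smooths to an irreducible free curve. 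Since $D$ is irreducible, meets every component, and has general point in the closure of the free locus, all components of $\overline{M}_{0,0}(X,e)$ must coincide, giving irreducibility. The same argument applied fiberwise over a general $p \in X$, using the irreducibility of $F^p(X)$ from Corollary \ref{cor-irrLineFibers} as the degree-one input, establishes irreducibility of the general fiber of $\ev$ and completes the induction.
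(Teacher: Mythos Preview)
Your outline has the right architecture (induction on $e$, bend-and-break to reach the boundary, then a smoothing argument for irreducibility), but the crucial steps are not actually carried out, and in one place the strategy diverges from what makes the argument go through.

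\textbf{Expected dimension.} You correctly isolate the star-type strata as the danger, and then write that ``establishing this bound on the star-type strata is the principal obstacle''---but you never establish it. Waving at Theorem~\ref{thm-nonfreelines} is not enough: that theorem bounds families of non-free \emph{lines}, but a star could sit over any point where $F^p(X)$ is large, and you have not explained why the total count $\dim Y + e\cdot \dim(S\cap F^p(X)) + (e-3)$ stays below $e(n+1-d)+n-4$, nor handled mixed combinatorial types. The paper avoids this combinatorial analysis entirely. Instead of bounding boundary strata, it bounds the locus in $\overline{M}_{0,0}(X,e)$ of maps having \emph{at least one non-free component}, proving by induction that this locus has dimension at most $en+d-5$. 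The inductive step uses bend-and-break to split such a map as $(C_1,f_1)\cup(C_2,f_2)$ with $(C_1,f_1)$ still carrying a non-free piece, then applies the inductive bound to the $e_1$ side and the universal fiber bound of Lemma~\ref{stable-bound} (namely $\dim \ev^{-1}(p)\le e_2 n-2$ for \emph{every} $p$) to the $e_2$ side. You never invoke Lemma~\ref{stable-bound}, and without it your boundary count does not close.

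\textbf{Irreducibility.} Your claim that a general member of any component ``degenerates into the boundary and lies in the closure of $D$'' is the gap. Bend-and-break produces some reducible stable map, not one whose pieces have degrees $1$ and $e-1$. The paper fixes this: once you have a reducible map with all components free (which the dimension bound guarantees is possible), Lemma~\ref{rat-def}(b) lets you smooth each connected sub-curve separately while fixing the attaching point, so you may first collapse to a two-component map of degrees $e_1,e_2$, then (if $e_1\ge 2$) further degenerate the $e_1$ piece into a line plus a degree $e_1-1$ curve and re-smooth the complementary pieces to land in $M_1^0\times_X M_{e-1}^0$. You also carry irreducibility of the general fiber of $\ev$ for all degrees as an inductive hypothesis; this is unnecessary. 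The paper only needs it for $e=1$, where it is Corollary~\ref{cor-irrLineFibers}, and that alone suffices to make $M_1^0\times_X M_{e-1}^0$ irreducible.
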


To prove the theorem we use a similar method as in \cite[Lemma 3.4]{dejong-starr}. Recall that a rational curve $f: \PP^1 \to X$ is \emph{free} if $f^*T_X$ is globally generated. We start with the following lemmas. 
 
\begin{lemma}\label{rat-def}  \cite[Theorem II.7.6]{kollar}. 
 Let $[(C,f)]$ in $\overline{M}_{0,0}(X,e)$ be such that the restriction of $f$ to every  component of $C$ is free. Then
\begin{itemize}
\item[(a)] $\overline{M}_{0,0}(X,e)$ has the expected dimension  at $[(C,f)]$.
\item[(b)] For any smooth point $q \in C$, there exists a deformation of $(C,f)$ smoothing the nodes of $C$ and keeping $f(q)$ fixed. 
\end{itemize}
\end{lemma}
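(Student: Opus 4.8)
The plan is to reduce both parts to a single cohomology vanishing on the nodal domain $C$ and then feed it into the standard deformation-obstruction theory of stable maps. Recall that a genus-zero nodal $C$ is a tree of $\PP^1$'s, and that freeness of $f|_{C_i}$ says exactly that $f^*T_X|_{C_i}$ is a globally generated bundle on each component $C_i$. The key claim is that $H^1(C, f^*T_X) = 0$ for part (a), and $H^1(C, f^*T_X(-q)) = 0$ for part (b), and I would prove both by induction on the number of components, peeling off a leaf of the tree. Root the tree at the component $C_0$ containing $q$, and write $C = C' \cup_p C''$ with $C''$ a terminal component (hence meeting the rest $C'$ in the single node $p$) chosen not to contain $q$. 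For either sheaf $\FF \in \{ f^*T_X,\, f^*T_X(-q)\}$ the partial-normalization sequence gives
\[ 0 \to \FF \to \FF|_{C'} \oplus \FF|_{C''} \to \FF_p \to 0, \]
where $\FF_p$ is the fiber at $p$. Since $q \notin C''$ we have $\FF|_{C''} = f^*T_X|_{C''}$, which is globally generated, so $H^0(C'', \FF|_{C''}) \to \FF_p$ is already surjective; the connecting map then vanishes and $H^1(C, \FF)$ injects into $H^1(C', \FF|_{C'}) \oplus H^1(C'', \FF|_{C''})$. The $C''$ summand vanishes (globally generated bundle on $\PP^1$), and the $C'$ summand vanishes by induction. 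The base case is a single $\PP^1$: $f^*T_X$ is globally generated, while $f^*T_X(-q) = \bigoplus \OO(a_j - 1)$ with each $a_j \geq 0$, so every summand has degree $\geq -1$ and $H^1 = 0$. The one point to watch is that surjectivity at the gluing node must always be supplied by the free leaf $C''$, never by the twisted-down component carrying $q$; rooting at $C_0$ and stripping leaves makes this automatic.

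With the vanishing in hand, part (a) follows from the deformation theory of stable maps into the smooth variety $X$: the obstructions to deforming $(C,f)$ (allowing the genus-zero domain to vary and its nodes to smooth) lie in $H^1(C, f^*T_X)$, so this space being zero makes $\overline{M}_{0,0}(X,e)$ smooth at $[(C,f)]$. Its local dimension is then
\[ \chi(C, f^*T_X) + \bigl(\dim \mathrm{Def}(C) - \dim \mathrm{Aut}(C)\bigr) = \bigl[e(n+1-d) + (n-1)\bigr] - 3 = e(n+1-d)+n-4, \]
using $\chi(\OO_C)=1$, $\chi(C,f^*T_X) = \deg f^*T_X + \mathrm{rk}\,f^*T_X = e(n+1-d)+(n-1)$, and the fact that for a genus-zero prestable curve $\dim\mathrm{Def}(C)-\dim\mathrm{Aut}(C) = 3g-3 = -3$. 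This is precisely the expected dimension, giving (a).

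For part (b), imposing that $f(q)$ stay fixed corresponds to restricting to first-order deformations whose section lies in $H^0(C, f^*T_X(-q))$, with obstructions now in $H^1(C, f^*T_X(-q))$. The vanishing proved above shows that the space of deformations of $(C,f)$ keeping $f(q)$ fixed is smooth (unobstructed) at $[(C,f)]$. Smoothing any individual node of $C$ is one of the (now unobstructed) domain-deformation directions, independent of the map data constrained by the fixed-point condition, so for each node the locus where it fails to smooth is a proper closed subset of this smooth deformation space. Hence a general deformation keeping $f(q)$ fixed simultaneously smooths every node, yielding (b).

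I expect the main obstacle to be setting up the deformation-obstruction theory cleanly for the nodal domain: identifying the obstruction spaces as $H^1(C, f^*T_X)$ and $H^1(C, f^*T_X(-q))$ and verifying that the nodes of $C$ contribute no additional obstruction precisely because $X$ is smooth, so that the node-smoothing directions really are unobstructed. The inductive vanishing also requires the careful bookkeeping noted above — ensuring surjectivity at each gluing node is always furnished by the free leaf rather than the $q$-twisted component. By comparison, the dimension count in (a) and the genericity argument smoothing the nodes in (b) are routine.
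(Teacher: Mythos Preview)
The paper does not prove this lemma at all; it is stated with a bare citation to Koll\'ar, so there is no argument to compare against. Your proof is correct and is essentially the standard argument underlying that reference: the inductive $H^1$-vanishing for $f^*T_X$ and $f^*T_X(-q)$ on a tree of $\PP^1$'s by peeling off free leaves (rooting at the component through $q$ so that surjectivity onto the fiber at each node is always supplied by the untwisted leaf), followed by the deformation theory of stable maps. Two small points of precision worth tightening. First, $H^1(C,f^*T_X)$ is the \emph{relative} obstruction space for $\overline{M}_{0,0}(X,e)\to\mathfrak{M}_{0,0}$; smoothness of the moduli space then follows because the base stack $\mathfrak{M}_{0,0}$ is itself smooth---you implicitly use this with your $-3$ correction, but it is the actual reason no domain obstructions appear. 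Second, in (b) the clean statement is that vanishing of $H^1(C,f^*T_X(-q))$ makes the forgetful map from $\ev^{-1}(f(q))$ to $\mathfrak{M}_{0,1}$ smooth, hence surjective on tangent spaces, so the preimage of each boundary divisor (where a fixed node persists) is a proper closed subset; your phrase ``independent of the map data'' is a little loose but this is what it amounts to.
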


\begin{lemma}\label{stable-bound}
If $p$ is a point of a smooth hypersurface $X$ in $\PP^n$, then the fiber of the evaluation map $ev: \overline{M}_{0,1}
(X,e) \to X$ over $p$ is at most $en-2$ dimensional. 
\end{lemma}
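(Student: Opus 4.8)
The plan is to bound $ev^{-1}(p)$ separately over the locus of stable maps with irreducible domain and over the boundary strata with reducible domain, proving the bound $en-2$ on the irreducible locus by a direct jet computation and showing every reducible stratum has strictly smaller dimension by a recursion on the number of components. The irreducible locus of $ev^{-1}(p)$ is, after fixing the marked point at $0\in\PP^1$, the quotient by $\operatorname{Aut}(\PP^1,0)$ of the set of degree $e$ morphisms $f$ with $f(0)=p$ and image in $X$, so the whole argument reduces to bounding that set of morphisms by $en$.

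For the morphism bound, write $X=V(F)$ and work inside the space $\operatorname{Mor}_e(\PP^1,\PP^n)_p$ of degree $e$ maps with $f(0)=p$, which is smooth of dimension $(n+1)e$. The maps whose image lies in $X$ are contained in the zero locus of
\[ \mathcal J\colon \operatorname{Mor}_e(\PP^1,\PP^n)_p \to \mathbb{C}^e,\qquad f\mapsto \big(\,[t^1](F\circ f),\dots,[t^e](F\circ f)\,\big), \]
the Taylor coefficients of $F\circ f$ at $0$ of orders $1$ through $e$ (the order $0$ term vanishes since $p\in X$). I would first note that sending $f$ to its $e$-jet at $0$ is a submersion onto $\mathbb{C}^{ne}$, since any jet is realized by a polynomial map $t\mapsto(\sum_j c_{ij}t^j)$ through $p$; hence it suffices to differentiate $\mathcal J$ in the jet coordinates $c_{ij}=[t^j]h_i$. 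Choosing affine target coordinates with $p$ the origin and $\partial_{z_1}F(p)\neq 0$ (possible as $X$ is smooth at $p$), one computes $\partial\,[t^k](F\circ f)/\partial c_{1j}=[t^{k-j}]\,\partial_{z_1}F(f)$, so the $e\times e$ block in $c_{11},\dots,c_{1e}$ is lower triangular with constant diagonal $\partial_{z_1}F(p)\neq 0$. Therefore $\mathcal J$ is a submersion at every point, its fibre has codimension exactly $e$, and the maps into $X$ through $p$ form a set of dimension at most $(n+1)e-e=en$; dividing by the two-dimensional group $\operatorname{Aut}(\PP^1,0)$ gives $en-2$ for the irreducible locus. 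The feature that makes this robust is that the diagonal entry is the \emph{constant} $\partial_{z_1}F(p)$, independent of $f$: this forces the $e$ conditions to be independent at every map, free or non-free, and is exactly where smoothness of $X$ is used.

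For the reducible strata I would induct on the number of components. A stable map through $p$ has a component $C_0$ carrying the marked point (sent to $p$) together with a tree of attached components of degrees $e_i$ with $\sum e_i=e$. Fixing $f(q)=p$ and the position of a node $x_0$ on $C_0$ and dividing by $\operatorname{Aut}(\PP^1,0)$, the $C_0$-data has dimension at most $e_0n+1-2=e_0n-1$, and this determines the node image $f(x_0)\in X$; any component $C_1$ of degree $e_1$ attached at $x_0$ is a degree $e_1$ map sending its node to the fixed point $f(x_0)$, so contributes at most $e_1n-2$ by the irreducible bound. A two-component configuration thus has dimension at most $(e_0n-1)+(e_1n-2)=en-3$, and iterating over the tree keeps the total at most $en-3$, with contracted components only lowering the estimate since stability forces extra special points. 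Combining the two cases yields $\dim ev^{-1}(p)\le en-2$. I expect the main obstacle to be the careful bookkeeping of these reducible strata—tracking node positions, matching conditions, and stability of contracted components—rather than the core jet estimate, which is clean; one should also record the base case $e=1$, where $ev^{-1}(p)=F^p(X)=V(f_1,\dots,f_d)$ has dimension at most $n-2$ because $f_1=dF|_p\neq 0$.
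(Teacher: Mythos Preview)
Your argument is correct, and the core of the irreducible case is morally the same computation as the paper's, just in different clothing. The paper bounds the tangent space to $ev^{-1}(p)$ at a map $f:\PP^1\to X$ by $h^0(N_f(-q))$, then uses the sequence $0\to f^*T_X(-q)\to f^*T_{\PP^n}(-q)\to f^*\OO_X(d)(-q)\to 0$ together with the observation that multiplication by a partial derivative of $F$ sends $H^0(f^*\OO(1)(-q))$ injectively into $H^0(f^*\OO(d)(-q))$; this forces the cokernel map to have rank at least $e$, whence $h^0(f^*T_X(-q))\le en$ and $h^0(N_f(-q))\le en-2$. Your lower-triangular Jacobian with constant diagonal $\partial_{z_1}F(p)\neq 0$ is exactly the coordinate incarnation of this injectivity, so both proofs extract the same $e$ independent conditions from smoothness of $X$ at $p$. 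What you gain by writing it as a submersion is that you visibly get codimension exactly $e$ in $\operatorname{Mor}_e(\PP^1,\PP^n)_p$ at \emph{every} point, not merely a tangent-space bound; what the paper's cohomological phrasing gains is brevity and the ability to read off the answer without choosing coordinates.

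The genuine difference is your explicit treatment of the boundary. The paper's proof only analyzes maps with domain $\PP^1$ and says nothing about strata with reducible domain, so one must implicitly appeal to an induction on $e$ via the gluing description of the boundary to conclude the bound on all of $\overline{M}_{0,1}(X,e)$. Your recursion makes this honest: the two-component count $(e_0n-1)+(e_1n-2)=en-3$ is correct, and the general tree case follows cleanly if you induct on $e$ rather than on the number of components (decompose at the marked component $C_0$ of degree $e_0$, attach $r\ge 1$ pointed trees of degrees $e_i<e$ at nodes, and use the inductive bound $e_in-2$ for each tree together with $e_0n+r-2$ for the $C_0$-data, or $r-2$ when $e_0=0$ and $r\ge 2$ by stability; in either case the total is $en-r-2\le en-3$). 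That repackaging avoids the bookkeeping you flagged as the main obstacle.
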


\begin{proof}
Assume $f: \PP^1 \to X$ is a morphism of degree $e$ with $f(q)=p$ and  $N_{f}$ is the cokernel of the map 
$T_{\PP^1} \to f^*T_X$. Then by \cite[Theorem II.1.7]{kollar}, the Zariski tangent space to the space of morphisms from $\PP^1$ to $X$ which map $q$ to $p$ is isomorphic to $H^0(f^*T_X(-q))$, and so the dimension of $ev^{-1}(p)$ at $(\PP^1, f, q)$ is at most $h^0(N_{f} (-q))$. There is an exact sequence 
$$ 0 \to f^*T_X (-q) \to f^*T_{\PP^n}(-q) \to f^* \OO_X(\deg X)(-q) \to 0.$$
By the Euler exact sequence, the image of  $H^0(f^*T_{\PP^n}(-q)) \to H^0(f^* \OO_X(d)(-q))$ contains the image of $H^0(f^*\OO_X(1)(-q)) \to H^0(f^*(\OO_X(d)(-q))$ given by any of the partial derivatives of the form defining $X$. So the image of 
$H^0(f^*T_{\PP^n}(-q)) \to H^0(f^* \OO_X(d)(-q))$ is at least $e$ dimensional. Thus
$h^0( f^*T_X (-q)) \leq h^0(f^*T_{\PP^n}(-q)) - e = en,$ and so
$h^0( N_{f} (-q)) \leq en-2$.
\end{proof}

\begin{proof}[Proof of Theorem \ref{higherdeg}]
We first prove that every irreducible component $M$ of  $\overline{M}_{0,0}(X,e)$  is of the expected dimension $e(n+1-d)+n-4$. 
To show this, we prove the following claim: the locus in $\overline{M}_{0,0}(X,e)$ parametrizing stable maps with at least one non-free component 
has dimension at most $en+d-5$. Since by our assumption $d \leq \frac{e+n}{e+1}$, we have 
$$en+d-5 <  e(n+1-d)+n-4 \leq \dim M,$$ so the claim shows that it is not possible that every curve parametrized by $M$ has a non-free component. So by part (a) of Lemma \ref{rat-def}, $M$ has the expected dimension. 

The proof of the claim is by induction on $e$. For $e=1$ this was proved in Theorem \ref{thm-nonfreelines}. Assume the statement holds for $1, \dots, e-1$, and let $N$ be an irreducible closed subscheme of  $\overline{M}_{0,0}(X,e)$ parametrizing stable maps with at least one non-free component. 
Set $r=\dim N $. If $r \leq 2n-4$, then 
$ r \leq en+d-5$ and we are done. 
So assume $r > 2n-4$, and let $Y$ be the subvariety of $X$ swept out by the images of morphisms parametrized by $N$. Since $\dim Y \leq n-1$, there are two distinct points of $Y$ with at least a $1$-dimensional family of maps parametrized by $N$ through them. So by the Bend and Break lemma \cite[Lemma 5.1]{harris-roth-starr}, there should be a map with reducible domain parametrized by $N$. Since the locus of stable maps with reducible domains in $\overline{M}_{0,0}(\PP^n,e)$ 
is of codimension 1, the locus parametrizing maps with reducible domains in $N$ is of codimension at most 1. Hence there are $e_1, e_2 < e$,  
$e_1+e_2=e$, and a subscheme $N'$ of dimension at least $r-1$ in $N$ parametrizing stable maps which can be decomposed as the union of a stable map $(C_1, f_1)$ of degree $e_1$ with at least one non-free component and a stable map $(C_2,f_2)$ of degree $e_2$. Since 
by Lemma \ref{stable-bound} the dimension of the space of degree $e_2$ stable maps through any point of $X$ is at most $e_2n-2$, 
by our induction hypothesis, we have $\dim N' \leq e_1n+d-5 + 1 + e_2n-2 = en+d-6.$ So $r \leq en+d-5$, and the claim holds for $e$. 
 
We now show by induction on $e$ that for every $e$, $\overline{M}_{0,0}(X,e)$ is irreducible. For $e=1$, the irreducibility was proved in Corollary \ref{cor-irrLines}. Assume that $e \geq 2$ and the statement holds for every degree smaller than $e$.  Let $M_1^0$ be the open subscheme of  $\overline{M}_{0,1}(X,1)$ parametrizing pointed free lines  and let $M_{e-1}^0$ be the open  subscheme of $\overline{M}_{0,1}(X,e-1)$ parametrizing pointed free stable maps with irreducible domain. Note that the evaluation maps $ev_1: M_1^0 \to X$ and $ev_{e-1}: M_{e-1}^0 \to X$ are dominant and flat by \cite[Corollary II.3.5.4]{kollar}. Moreover, general fibers of $ev_1$ are irreducible. This is because only free lines pass through a general point of $X$, so the space of lines through a general point is smooth by 
Lemma \ref{rat-def}, and since this space is a complete intersection of dimension $\geq 1$, it is also connected and is hence irreducible. 
This implies that $M_1^0 \times_X M_{e-1}^0$ is irreducible.

Let $M$  be an irreducible component of $\overline{M}_{0,0}(X,e)$. By the Bend and Break lemma, there is a stable map $f$ parametrized by 
$M$ such that the domain of $f$ is reducible. Note that by the above dimension count argument, the dimension of the locus of stable maps with reducible domain and at least one non-free component is smaller than $\dim M -1$, so there is  $f$ with reducible domain  parametrized by $M$ with the property that its restriction to every irreducible component is free, hence $[f]$ is a smooth point of $M$. We can separate the domain of $f$ into two connected components of 
degrees $e_1$ and $e_2$ for some $e_1 \leq e_2$. By part (b) of Lemma \ref{rat-def},  the restriction of $f$ to each of these connected components 
can be smoothed while keeping the intersection point fixed. Since $[f]$ was a smooth point of $M$, we conclude that $M$ parametrizes a stable map 
$f'$ whose domain has two irreducible components $C_1$ and $C_2$ and $\deg f'|_{C_i}=e_i$, $i=1,2$.

If $e_2 \geq 2$, then applying the induction hypothesis to $e_1$, we see that $f'|_{C_1}$ can be deformed to the union of a free line and a free curve $D$ of degree $e_1-1$ interesting it, and $f'|_{C_2}$ can be deformed to a free curve $D'$ of degree $e_2$ intersecting $D$.  Smoothing $D \cup D'$, we conclude that $M$ contains a map 
parametrized by  $M_{1}^0 \times_X M_{e-1}^0$. Since $M_{1}^0 \times_X M_{e-1}^0$ is irreducible and a general point of it is contained in the smooth locus of $\overline{M}_{0,0}(X,e)$, we conclude that the component $M$ is unique. 

\end{proof}

\newcommand{\closer}{\vspace{-1.5ex}}


\begin{thebibliography}{99}

\bibitem{ananyan-hochster}
T. Ananyan and M. Hochster, Small Subalgebras of Polynomial Rings and Stillman's Conjecture, arXiv:1610.09268

\bibitem{beheshtid6}
R. Beheshti, Lines on projective hypersurfaces, J. Reine Angew. Math. {\bf 592} (2006), 1--21.

\bibitem{beheshtid8}
R. Beheshti, Hypersurfaces with too many rational curves, Math. Ann. {\bf 360} (2014), no.~3-4, 753--768.

\bibitem{beheshtikplanes}
R. Beheshti, Linear subvarieties of hypersurfaces, Int. Math. Res. Not. 2005, no. 49, 3055--3063.

\bibitem{browning-vishe}  
T. Browning\ and\ P. Vishe, Rational curves on smooth hypersurfaces of low degree, Algebra Number Theory {\bf 11} (2017), no.~7, 1657--1675. 

\bibitem{clemens-griffiths}
C. H. Clemens\ and\ P. A. Griffiths, The intermediate Jacobian of the cubic threefold, Ann. of Math. (2) {\bf 95} (1972), 281--356. 

\bibitem{collino} 
A. Collino, Lines on quartic threefolds. J. London Math. Soc. (2) {\bf 19} (1979), 257--267.

\bibitem{coskun-starr}
I. Coskun and J. Starr. Rational curves on smooth cubic hypersurfaces, Int. Math. Res. Not.
2009, no. 24, 4626–4641.

\bibitem{dejong-starr} 
A. J. de Jong and J.~ Starr, Low degree complete intersections are rationally simply connected. Available at http://www.math.stonybrook.edu/~jstarr/papers/nk1006g.pdf

\bibitem{3264}
D. Eisenbud\ and\ J. Harris, {\it 3264 and all that---a second course in algebraic geometry}, Cambridge University Press, Cambridge, 2016.

\bibitem{erman-sam-snowden}
D. Erman, S. Sam, and A. Snowden, Big polynomial rings and Stillman's conjecture arXiv:1801.09852

\bibitem{fulton-pandharipande}
W Fulton and R. Pandharipande, Notes on stable maps and quantum cohomology,
in Algebraic geometry Santa Cruz 1995, J. Koll\'ar, R. Lazarsfeld, D. Morrison eds.,
AMS: Providence, 1997.

\bibitem{SGA2}
A. Grothendieck, {\it{ Cohomologie locale des faisceaux coh\'{e}rents et th\'{e}or\`emes de Lefschetz locaux et globaux}} (SGA $2$), North-Holland Publishing Co., Amsterdam, 1968.

\bibitem{HMP} 
J. Harris, B. Mazur\ and\ R. Pandharipande, Hypersurfaces of low degree, Duke Math. J. {\bf 95} (1998), no.~1, 125--160.

\bibitem{harris-roth-starr} 
J.~ Harris, M.~ Roth, and J.~ Starr, Rational curves on hypersurfaces of low degree, J. Reine Angew. Math., {\bf 571} (2004), 73–106.

\bibitem{hartshorne}
R. Hartshorne, {\it Algebraic geometry}, Springer-Verlag, New York, 1977.

\bibitem{hochster-laksov}
M. Hochster\ and\ D. Laksov, The linear syzygies of generic forms, Comm. Algebra {\bf 15} (1987), no.~1-2, 227--239.

\bibitem{kollar} 
J.~ Koll\'ar, Rational curves on algebraic varieties. volume 32 of Ergebnisse der Mathematik und ihrer Grenzgebiete, 3. Folge. Springer-Verlag, 1996.

\bibitem{kollarHyp}
J. Koll\'{a}r, Nonrational hypersurfaces, J. Amer. Math. Soc. {\bf 8} (1995), no.~1, 241--249.

\bibitem{landsberg-robles}
J. M. Landsberg\ and\ C. Robles, Lines and osculating lines of hypersurfaces, J. Lond. Math. Soc. (2) {\bf 82} (2010), no.~3, 733--746.

\bibitem{landsberg-tommasi}
J. M. Landsberg\ and\ O. Tommasi, On the Debarre-de Jong and Beheshti-Starr conjectures on hypersurfaces with too many lines, Michigan Math. J. {\bf 59} (2010), no.~3, 573--588.

\bibitem{lehmann-tanimoto}
B. Lehmann and S. Tanimoto, Geometric Manin's Conjecture and rational curves, preprint.  

\bibitem{paranjape-srinivas}
K. Paranjape and V. Srinivas, Unirationality of the general complete intersection of small multidegree, Ast\'{e}risque {\bf 211}, Soc. Math. France, Montrouge, 1992.

\bibitem{riedl-yang} 
E. Riedl and D. Yang, Kontsevich spaces of rational curves on Fano hypersurfaces, J. Reine Angew. Math. 

\bibitem{schreieder} S. Schreieder, Stably irrational hypersurfaces of small slopes, arXiv:1801.05397v3

\bibitem{starr-kplanes} J. Starr, Fano varieties and linear sections of hypersurfaces, preprint. 

\bibitem{totaro} 
B. Totaro, Hypersurfaces that are not stably rational, J. Amer. Math. Soc. {\bf 29} (2016), no.~3, 883--891.
\end{thebibliography}
\end{document}